\documentclass[11pt,letter]{amsart}
\usepackage{amssymb,amsmath,epsfig,graphics,mathrsfs,enumerate,verbatim}
\usepackage[pagebackref,colorlinks=true,linkcolor=blue,citecolor=blue]{hyperref}

\DeclareFontFamily{U}{mathx}{\hyphenchar\font45}
\DeclareFontShape{U}{mathx}{m}{n}{
      <5> <6> <7> <8> <9> <10>
      <10.95> <12> <14.4> <17.28> <20.74> <24.88>
      mathx10
      }{}
\DeclareSymbolFont{mathx}{U}{mathx}{m}{n}
\DeclareFontSubstitution{U}{mathx}{m}{n}
\DeclareMathAccent{\widecheck}{0}{mathx}{"71}
\usepackage{fancyhdr}
\usepackage{hyperref}
\hypersetup{
 colorlinks   = true,
 urlcolor     = blue,
 linkcolor    = blue,
 citecolor   = red ,
 bookmarksopen=true
}

\usepackage{amsmath}
\usepackage{amsfonts}
\usepackage{amssymb}
\usepackage{amsthm}
\usepackage{epsfig,graphics,mathrsfs}
\usepackage{graphicx}

\makeatletter
\@namedef{subjclassname@2020}{%
  \textup{2020} Mathematics Subject Classification}
\makeatother

\usepackage[usenames, dvipsnames]{color}

\usepackage{hyperref}

\usepackage[a4paper,
bindingoffset=0in,
left=0.8in,
right=0.8in,
top=1.3in,
bottom=1.3in,
footskip=.25in]{geometry}

\def \phi {\varphi}
\def \RNu {\mathbb{R}^{d+1}}
\def \RN {\mathbb{R}_+^{d+1}}
\def \R {\mathbb{R}}

\def \G{\Gamma}
\newcommand{\Ba}{\mathscr B_z^{(a)}}
\newcommand{\BaN}{\mathscr B_z^{(a),\mathrm N}}

\def \vf{\varphi}

\newcommand{\Rd}{\mathbb R^d}

\newcommand{\p}{\partial}

\newcommand{\la}{\lambda}

\numberwithin{equation}{section}

\newcommand{\beq}{\begin{equation}}
\newcommand{\bea}[1]{\begin{array}{#1} }
\newcommand{\eeq}{ \end{equation}}
\newcommand{\ea}{ \end{array}}

\newcommand{\ve}{\varepsilon}

\newcommand{\sa}{\langle}
\newcommand{\da}{\rangle}

\newcommand{\C}{\mathbb{C}}

\newcommand{\M}{\mathcal M}
\newcommand{\Le}{\mathcal L}

\newcommand{\ps}{P_\sigma^{\Le}}

\newcommand{\RNm}{\Rd\times (-\infty,0]}
\newcommand{\Sm}{\mathscr S_-(\RNu)}
\newcommand{\Dom}{\operatorname{Dom}}

\newtheorem{theorem}{Theorem}[section]
\newtheorem{lemma}[theorem]{Lemma}
\newtheorem{proposition}[theorem]{Proposition}
\newtheorem{corollary}[theorem]{Corollary}
\newtheorem{remark}[theorem]{Remark}
\newtheorem{definition}[theorem]{Definition}

\numberwithin{equation}{section}
\begin{document}


\title[]{An extension theory for fully fractional Schr\"odinger equations with memory} 
	
\subjclass[2020]{Primary 35Q55, 35R11; Secondary 35A08, 35A01.}

\keywords{Fully fractional Schr\"odinger operator; extension problem;
Schr\"odinger equations with memory.}

\date{}

\begin{abstract}
We develop a Caffarelli-Silvestre extension theory for the fully fractional
Schr\"odinger operator $\Le^s=(\partial_t-i\Delta_x)^s$, $0<s<1$, nonlocal in
space and time, whose Cauchy problem prescribes a past history rather than data
at a single time. Every extension theory so far rests on positivity or
sectoriality of the generator; here the semigroup is unitary and the symbol
changes sign across the characteristic paraboloid, so neither is at hand. We
construct the extension nonetheless. Its Poisson kernel, computed explicitly, is
oscillatory rather than positive; its Dirichlet-to-Neumann map is $\Le^s$; its
normalising constant is the Caffarelli-Silvestre constant times the phase
$e^{i\frac{\pi s}{2}}$; and the theory undergoes a transition at $s=\frac12$. We
then identify the intrinsic Hilbert space of histories, on which the Poisson
lifting is an isometry. This closes a circle. A lifted history is precisely an
initial datum for the singular Schr\"odinger equation with nonlinear Neumann
interaction of our companion paper, whose well-posedness theory therefore
transfers to $\Le^su=\mu|u|^{p-1}u$ with prescribed past. With our earlier work
on the Bessel operator on a half-line, the three papers form a single programme,
of which the present one is the closing step.
\end{abstract}

\author{Nicola Garofalo}
\address{School of Mathematical and Statistical Sciences\\ Arizona State University}\email[Nicola Garofalo]{nicola.garofalo@asu.edu}

\author{Gigliola Staffilani}
\address{Department of Mathematics\\ Massachusetts Institute of Technology\\
Cambridge, Massachusetts, 02141\\USA}\email[Gigliola Staffilani]{gigliola@math.mit.edu}

\thanks{G. Staffilani is funded in part by the NSF grant DMS-2306378 and the Simons Foundation through the Simons
Collaboration Grant on Wave Turbulence.}
	
\maketitle
	
\tableofcontents

\section{Introduction}\label{S:intro}

Many physical systems exhibit hereditary effects in which the future evolution
depends not only on the present state, but on the entire past history of the
system. Such phenomena arise naturally in continuum mechanics, viscoelasticity,
anomalous transport, and nonlocal diffusion, and are commonly modeled by
fractional differential operators. In contrast with classical evolution
equations, where the solution is uniquely determined by its value at a single
initial time, fully fractional evolution equations require the prescription of a
\emph{history}, namely the values of the solution on the whole past time
interval. See for example \cite{CM, Mainardi, MK, DGLZ}.

Recent years have witnessed a growing interest in fully fractional evolution equations. In the parabolic setting, fully fractional heat equations with memory have been investigated from several perspectives, including global existence, blow-up, and Fujita-type phenomena, see Section \ref{S:epnonlocal} for some relevant references.

By contrast, the present work is concerned with the nonlocal dispersive operator in $\RNu$
\[
\mathcal L^s :=(\partial_t-i\Delta_x)^s,
\qquad 0<s<1,
\]
whose oscillatory character and non-sectorial nature require substantially
different analytical techniques. Our purpose is to develop an extension theory
for $\Le^s$, and to use it to solve the nonlocal Cauchy problem
\begin{equation}\label{cpmem}
\begin{cases}
\mathcal L^s u(x,t) = \mu |u(x,t)|^{p-1} u(x,t),\ \ \ x\in \Rd, t>0,
\\
u(x,t) = u_0(x,t),\ \ \ \ \ \ \ \ \ \ \ \ \ \ \ \ \ \ \ \ \ x\in \Rd, t\le 0,
\end{cases}
\end{equation}
where $\mu\in\mathbb C$, $p>1$, and the datum
$u_0:\mathbb R^d\times(-\infty,0]\longrightarrow\mathbb C$
represents the prescribed past history.

The operator $\mathcal L=\partial_t-i\Delta_x$ is the classical Schr\"odinger
operator, with unitary group $S(t)=e^{it\Delta_x}$, and its fractional powers
admit the Balakrishnan-type representation
\begin{equation}\label{E:intro-balak}
\Le^su(x,t)
=-\frac{s}{\G(1-s)}\int_0^\infty
\frac{S(\sigma)u(\cdot,t-\sigma)(x)-u(x,t)}{\sigma^{1+s}}\,d\sigma ,
\end{equation}
in which the value of $\Le^su$ at time $t$ is a superposition of the states
$S(\sigma)u(\cdot,t-\sigma)$, weighted by $\sigma^{-1-s}$.

The weight is the same one that occurs in the Balakrishnan representation of the
fully fractional heat operator $(\p_t-\Delta_x)^s$, which is likewise an
operator with memory: there too the datum is prescribed on all of $t\le0$. The
difference lies in the semigroup. In the Fourier variable $\xi$ the heat
semigroup contributes the factor $e^{-4\pi^2\sigma|\xi|^2},$ so that the state
at lag $\sigma$ enters with amplitude $\sigma^{-1-s}e^{-4\pi^2\sigma|\xi|^2}$
and \eqref{E:intro-balak} converges absolutely, with an effective cutoff at
$\sigma\sim|\xi|^{-2}$. The Schr\"odinger group contributes
$e^{-4\pi^2i\sigma|\xi|^2}$, which has modulus one: the amplitude is $\sigma^{-1-s}$ at
every frequency, there is no cutoff, and the integrals to be controlled are
oscillatory rather than absolutely convergent. This is the source of most of the
technical work below, and the reason the estimates rest on stationary-phase and
Fresnel-type arguments rather than on kernel bounds. For time-fractional
Schr\"odinger evolutions see \cite{Naber}. We emphasize that problem
\eqref{cpmem} is
fundamentally different from the much studied fractional Schr\"odinger equation
of Laskin \cite{Laskin},
\[
\partial_t u + i(-\Delta_x)^s u = 0,
\]
in which the fractional power acts in the space variables alone and the Cauchy
problem is posed with a datum at the single time $t=0$.

\medskip
\noindent\textbf{Two structural obstructions.} The analysis of $\Le^s$ is governed by two features which have no
counterpart in the elliptic or parabolic theories. Consider the real polynomial
\[
p(\xi,\tau)=2\pi\tau+4\pi^2|\xi|^2,
\]
so that the Fourier symbol of $\Le$ is $\widehat{\Le u}=i\,p\,\widehat u$. Since
$p$ changes sign across the characteristic paraboloid
\[
P=\{(\xi,\tau)\in\mathbb R^{d+1}\mid \tau = - 2\pi|\xi|^2\},
\]
the complex power $(ip)^s$ necessarily involves two different branches,
\[
(ip(\xi,\tau))^s=
\begin{cases}
e^{\,i\frac{\pi s}{2}}\,|p(\xi,\tau)|^s, & p(\xi,\tau)>0,
\\[2mm]
e^{-\,i\frac{\pi s}{2}}\,|p(\xi,\tau)|^s, & p(\xi,\tau)<0,
\end{cases}
\]
and the multiplier distinguishes the two regions of phase space separated
by $P$. Secondly, the underlying Schr\"odinger semigroup is unitary rather
than contractive, and as a consequence the extension kernel we construct is
oscillatory rather than positive. The loss of positivity changes the analytical
framework at its foundation: comparison principles, positivity arguments and
monotonicity methods, which underlie the whole parabolic theory, are
unavailable, and must be replaced by genuinely dispersive techniques.

Fractional powers of the Schr\"odinger operator were first investigated by
Samko and his collaborators \cite{Sa,CN,NS}, following ideas originating in the
classical work of M.~Riesz on fractional powers of the wave operator and their
subsequent extension by Four\`es-Bruhat to ultra-hyperbolic operators
\cite{R2,FB}. These works provide a rigorous definition of $\mathcal L^s$, but
they do not address the corresponding extension problem, which is the object of
the present paper.


\subsection{Extension problems for nonlocal operators}\label{S:epnonlocal}

One of the most successful developments in the modern analysis of nonlocal
operators is the realization of fractional powers of differential operators as
Dirichlet-to-Neumann maps of local extension problems. Beginning with the
seminal work of Caffarelli and Silvestre \cite{CS} for the fractional Laplacian,
the extension method has profoundly influenced the study of nonlocal equations,
opening the way to powerful local techniques in regularity theory, free boundary
problems, unique continuation and nonlinear analysis.

The extension philosophy has subsequently been developed in several different
directions. For the square root of the heat operator
$(\partial_t-\Delta_x)^{1/2}$, an extension already appeared, although from a
very different perspective, in the pioneering work of F. Jones \cite{Jo} on
parabolic Lipschitz spaces. The general semigroup approach to extension problems
for fractional powers of operators was later developed by Stinga and Torrea,
and subsequently generalized by Gale, Miana and Stinga to broad classes of
generators of integrated semigroups and operators with purely imaginary symbols
\cite{GMS}. In the particular setting of the fractional heat operator
$(\partial_t-\Delta_x)^s$, the extension theory and the corresponding
regularity theory were independently established by Nystr\"om-Sande and by
Stinga-Torrea \cite{NyS,ST}. The recent work \cite{FD} on the fully fractional heat equation has further illustrated the effectiveness of the positivity-based approach in the analysis of nonlinear equations with memory, including the study of blow-up and Fujita-type phenomena.
Despite their breadth, the common feature of these theories is that they are
ultimately built upon positivity or sectoriality of the underlying generator.

The one extension theory in the literature which, like ours, must dispense with
positivity is that of Enciso, Gonz\'alez and Vergara \cite{EGV}, who realized
fractional powers of the wave operator as the Dirichlet-to-Neumann map of the
Klein-Gordon equation in anti-de Sitter spacetimes. Their symbol
$|\xi|^2-\tau^2$ changes sign across the light cone, much as ours does across
the paraboloid $P$, and the same branch ambiguity has to be resolved. The two
settings then diverge: theirs is hyperbolic and the construction is driven by
the underlying Lorentzian geometry, whereas ours is dispersive and is governed
by the oscillatory Schr\"odinger semigroup, so that the extension equation is
itself of Schr\"odinger type in the half-space. We return to this comparison in
Section \ref{S:osc}.

The present paper is the third in a series devoted to the development of a local
theory for singular Schr\"odinger equations in the upper half-space and its
application to nonlocal Schr\"odinger equations through an oscillatory extension
procedure. In \cite{GS1} we developed the linear theory for the Schr\"odinger
equation associated with the Bessel operator on the half-line, establishing the
corresponding Schr\"odinger propagator, Strichartz estimates and well-posedness for the corresponding NLS.
In the subsequent paper \cite{GS2} we considered singular Schr\"odinger
equations in the upper half-space with nonlinear Neumann boundary interactions
driven by the Bessel operator, proving a complete local well-posedness theory in both
the critical and subcritical regimes. The principal objective of the present
work is to connect those results with the nonlocal memory equation
\eqref{cpmem}, by showing that $\Le^s$ admits an extension problem whose bulk
evolution is precisely the singular Schr\"odinger equation studied in
\cite{GS2}.


\subsection{Main results}\label{S:mainres}

Throughout we set
\[
a=1-2s\in(-1,1),
\qquad
\Ba=\p_{zz}+\frac az\p_z ,
\]
the latter being the Bessel operator on the half-line $\R^+_z$, and we work in
the half-space $\RN=\Rd_x\times\R^+_z$ with the weighted measure
$d\omega_a(X)=z^adxdz$. The reader should be aware that hereafter we use the notation introduced in Section \ref{S:not}.

Our first result solves the extension problem in phase space and identifies the
Dirichlet-to-Neumann map. It is the exact dispersive analogue of the
Caffarelli-Silvestre theorem, with one essential difference: the extension
equation is not elliptic but of Schr\"odinger type, and the multiplier carries
the unimodular phase $\theta(\xi,\tau)$ of \eqref{phase}, which is what selects
the correct branch on either side of $P$. In the next statement, $K_s$ indicates the Bessel function in \eqref{Knu}.

\medskip
\noindent\textbf{Theorem A} (Theorem \ref{T:itstheFT}).
\emph{Let $0<s<1$ and $u\in\mathscr S(\RNu)$. The extension problem
\[
\p_tU-i\bigl(\Delta_xU+\Ba U\bigr)=0
\quad\text{in }\RN\times\R,
\qquad
U(x,0,t)=u(x,t),
\]
admits the phase-space representation
\[
\widehat U(\xi,z,\tau)
=\frac{2^{1-s}}{\G(s)}
\Bigl(z\,\theta(\xi,\tau)|p(\xi,\tau)|^{\frac12}\Bigr)^{s}
K_{s}\Bigl(z\,\theta(\xi,\tau)|p(\xi,\tau)|^{\frac12}\Bigr)\,\widehat u(\xi,\tau),
\]
and its weighted Neumann derivative recovers the fully fractional Schr\"odinger
operator,
\[
-\frac{2^{2s-1}\G(s)e^{i\frac{\pi s}{2}}}{\G(1-s)}\,
\lim_{z\to0^+}z^{a}\,\p_zU(\cdot,z,\cdot)=\Le^su .
\]}

\medskip
It is worth recording that the constant in Theorem A is exactly the one of
Caffarelli and Silvestre, rotated. Writing
$d_s=\frac{2^{2s-1}\G(s)}{\G(1-s)}$ for the normalising constant in the
Dirichlet-to-Neumann identity $(-\Delta)^su=-d_s\lim_{z\to0^+}z^a\p_zU$ of
\cite{CS}, the constant appearing in Theorem A is $d_s\,e^{i\frac{\pi s}{2}}$.
The phase is the same one that relates $\Le^s$ to Samko's operator
$(\Delta_x+i\p_t)^s$, and the same one which, in Remark \ref{R:admissible-mu}
below, rotates the admissible coupling constants of the nonlinear problem off
the real axis.

\medskip
The phase-space formula is explicit, but implicit in the physical variables. Our
second result inverts it, and produces the Poisson kernel of the extension. In
the elliptic and parabolic theories the corresponding kernel is a positive
approximate identity; here it is an oscillatory kernel which tensorizes into a
one-dimensional factor in the extension variable and the free Schr\"odinger
propagator in $\Rd$, see $S(x,y,t)$ in \eqref{Scla}. 

\medskip
\noindent\textbf{Theorem B} (Theorem \ref{T:invFT}).
\emph{The extension multiplier of Theorem A is the Fourier transform, in
$\mathscr S'(\RNu)$, of the kernel
$\mathbb P_a(X,Y_0,t)=P_a(z,t)\,S(x,y,t)$, where
\[
P_a(z,t)
=\frac{e^{\,i\frac{(a-1)\pi}{4}}}{2^{1-a}\G\!\left(\frac{1-a}{2}\right)}
\frac{z^{1-a}}{t^{\frac{3-a}{2}}}\,e^{i\frac{z^2}{4t}}\,\mathbf 1_{(0,\infty)}(t) .
\]
Consequently the solution of the extension problem is given by the Poisson
representation
\begin{equation}\label{U}
U(X,t)=\int_0^\infty\int_{\Rd}\mathbb P_a(X,Y_0,\sigma)\,u(y,t-\sigma)\,dy\,d\sigma .
\end{equation}}

\medskip
Formula \eqref{U} raises at once the question around which the second half of
the paper revolves. The memory problem \eqref{cpmem} prescribes a history $u_0(x,t)$,
$t\le0$; what is the corresponding initial datum for the local extension
problem? The answer is provided by the same kernel: we define the
\emph{Poisson lifting} of a history by
\begin{equation}\label{lift}
U_0(X)=\mathcal P_su_0(X)
:=\int_0^\infty\int_{\Rd}\mathbb P_a(X,Y_0,\sigma)\,u_0(y,-\sigma)\,dy\,d\sigma .
\end{equation}
The third result computes the weighted bulk energy of the lifting exactly, in
terms of the history alone. This identifies the intrinsic Hilbert space of
admissible histories, and shows that the passage from the memory problem to the
extension problem is an isometry rather than merely a bounded map.

\medskip
\noindent\textbf{Theorem C} (Theorem \ref{T:history} and Corollary \ref{C:extends}).
\emph{There is an explicit nonlocal quadratic form $Q_s$, acting on histories
through the Fourier variable in $x$ and a Mellin variable conjugate to the past
time, such that
\[
\|\mathcal P_su_0\|^2_{L^2_a(\RN)}=Q_s(u_0)
\qquad\text{for every } u_0\in\Sm .
\]
Denoting by $\mathcal H^s$ the completion of $\Sm$ under $Q_s^{1/2}$, the
Poisson lifting extends uniquely to an isometry
$\mathcal P_s:\mathcal H^s\to L^2_a(\RN)$.}

\medskip
With Theorem C the nonlinear theory of \cite{GS2} becomes available: the lifted
history is an admissible initial datum for the singular Schr\"odinger equation
in $\RN$ with nonlinear Neumann interaction,
\begin{equation}\label{cp020}
\begin{cases}
\partial_tU-i(\Delta_xU+\Ba U)=0,
&X\in\RN,\ t>0,
\\
\displaystyle
-\frac{2^{2s-1}\G(s)e^{i\pi s/2}}{\G(1-s)}\lim_{z\to0^+}z^a\partial_zU
=\mu |U(x,0,t)|^{p-1}U(x,0,t),
&x\in\Rd,\ t>0,
\\
U(X,0)=U_0(X).
\end{cases}
\end{equation}
Solving \eqref{cp020} and taking the boundary trace of the solution produces the
evolution of the memory problem. This is the content of our final result, in
which $p_c$ denotes the critical exponent \eqref{E:critical-exponents} and
$c_s$ the normalising constant of the Dirichlet-to-Neumann identity.

\medskip
\noindent\textbf{Theorem D} (Theorem \ref{T:memory-wp}).
\emph{Let $u_0\in\mathcal H^s$ be a prescribed history and $1<p\le p_c$. Then
the memory problem \eqref{cpmem} is well posed in the extension-induced mild
sense of Definition \ref{D:memory-solution}. Precisely, when $0\le a<1$: if
$p=p_c$ there is $\ve_0>0$ such that $\|u_0\|_{\mathcal H^s}\le\ve_0$ implies
the existence of a unique mild solution on $(0,T]$ for every $T<\infty$; if
$1<p<p_c$ there is $T>0$, depending on the data only through
$\|u_0\|_{\mathcal H^s}$, for which the problem has a unique mild solution on
$(0,T]$, and this solution extends to every $T<\infty$ when
$\operatorname{Im}(c_s\mu)=0$. The corresponding statements in the anomalous
range $-1<a<0$ are given in Theorem \ref{T:memory-wp}(ii).}

\medskip
The globalization hypothesis in Theorem D deserves a word, since it is not the
familiar requirement that the coupling constant be real. As we show in
Remark \ref{R:admissible-mu}, the condition $\operatorname{Im}(c_s\mu)=0$ is
equivalent to
\[
\mu\in e^{i\frac{\pi s}{2}}\,\R ,
\]
so that no real $\mu\neq0$ qualifies: the classical focusing and defocusing
constants are excluded for every $s\in(0,1)$. The admissible coupling constants
form the real line rotated by the phase which relates $\Le^s$ to Samko's
operator $(i\p_t + \Delta_x)^s$, and the rotation disappears only in the limit
$s\to0^+$.


\subsection{Strategy, and a transition at $s=\frac12$}\label{S:des}

The proofs follow the extension philosophy of \cite{CS}: the nonlocal problem
\eqref{cpmem} is not attacked directly. One first replaces $\Le^s$ by an
equivalent local equation in one additional spatial variable (Theorems A and B),
then lifts the prescribed history to an initial datum for that local problem
(Theorem C), and finally transfers the nonlinear theory of \cite{GS1,GS2} back
to the original equation (Theorem D).

Each of the three steps carries its own difficulty, and none of them is
inherited from the elliptic or parabolic theories. The first requires resolving
the branch ambiguity of $(ip)^s$ across the characteristic paraboloid, which we
do by a limiting-absorption prescription, so that the solution is selected by the
same analytic rule on both sides of $P$, and then inverting a Bessel multiplier of complex argument
whose boundedness on the characteristic rays is itself delicate; this is where
the transition at $s=\frac12$ discussed below originates. The second is an exact
computation rather than an estimate: through a Mellin analysis in the past-time
variable, the weighted bulk energy of the lifting is identified with a nonlocal
quadratic form in the history alone, and it is precisely because this is an
identity, and not an inequality, that the passage from the memory problem to the
extension problem loses nothing. The third rests on the existence of a strong
boundary trace, which does not follow from the Strichartz estimates alone:
those control the solution in $L^\infty_z$, and an essential
supremum in $z$ carries no information at the single point $z=0$. The required
continuity up to the boundary is the content of \cite[Theorems~8.1
and~8.2]{GS2}; for the free bulk evolution we reprove it, in the form in which
it is used here, in Lemma \ref{L:free-trace}.

A structural feature emerges along the way which we wish to emphasize, since it
governs the functional setting throughout: the theory undergoes a transition at
$s=\frac12$. The Bessel multiplier $M_\nu(w)=w^\nu K_\nu(w)$ that appears in
Theorem A is bounded on the characteristic ray $\arg w=-\frac\pi2$ when
$\nu\le\frac12$, whereas for $\frac12<\nu<1$ our bounds only give the growth
$|w|^{\nu-\frac12}$ there (Lemma \ref{L:MultiplierBound}). Accordingly the
extension operator is defined on all of $L^2(\RNu)$ when $0<s\le\frac12$, while
for $\frac12<s<1$ we work on the graph space $\Dom(\Le^s)$
(Theorem \ref{T:L2Extension}). For the Dirichlet-to-Neumann convergence the
roles are reversed, since there the relevant index is $1-s$: the natural
hypothesis $u\in\Dom(\Le^s)$ suffices for $\frac12\le s<1$, whereas for
$0<s<\frac12$ we require the strictly stronger condition
$u\in\Dom(\Le^{\frac14+\frac s2})$ (Theorem \ref{T:DtNL2}). This dispersive
transition is the same one identified in \cite{GS1,GS2}.

A word is in order, finally, on the sense in which the memory problem is solved.
The equation $\Le^su=\mu|u|^{p-1}u$ for $t>0$, with $u$ prescribed for $t\le0$,
is the problem that motivates this paper, and it is the problem solved when the
history is an actual function of the past, $u_0\in\Sm$. For a general element of
the history space $\mathcal H^s$, however, $u_0$ need not be a function on
$t\le0$, and no pointwise meaning can be attached to the prescription of the
past. Our formulation therefore proceeds through the extension: the history is
lifted by $\mathcal P_s$ to a datum in the weighted half-space, the nonlinear
boundary-value problem is solved there, and the future evolution is
\emph{defined} to be the boundary trace of that solution, see
Definition \ref{D:memory-solution}. Three consequences of this choice should be
stated at the outset. First, the trace so obtained satisfies the boundary
Volterra equation of Corollary \ref{C:volterra}, which is where the memory
structure of the problem becomes visible. Second, the original identity
$\Le^su=\mu|u|^{p-1}u$ is recovered as an identity in $L^2$, for positive times,
only under the additional graph regularity recorded in
Remark \ref{R:graph-formulation}. Third, the uniqueness assertion of Theorem D
is uniqueness within the class of solutions produced by the lifting; we do not
assert that an arbitrary solution of the fractional equation, in some weaker
sense, must arise in this way.


\subsection{Notation}\label{S:not}
\begin{itemize}
\item Generic points in the bulk space $\RN$ will be indicated with $X = (x,z), Y = (y,\zeta)$, etc.;
\item points of the boundary 
$\p \RN$ will be denoted by $X_0 = (x,0), Y_0 = (y,0)$, etc.; 
\item space-time points in the bulk $\RN\times \R$ will be denoted by
$(X,t), (Y,\tau)$, etc.; 
\item as customary, $\mathscr S(\R^{d+1})$ is the Schwartz class in the variables $(x,t)\in \R^{d+1}$;
\item we  denote by $\Sm$ the restrictions to the half-space $\RNm$ of the Schwartz functions in $\RNu$;
\item we indicate by  $d\omega_a(X) = z^a dx dz$ the invariant measure for the Weinstein operator $\Delta_x +\Ba$ in $\RN$. We mean by this that given $U, V\in C^\infty_0(\RN)$, we have 
 \[
\int_{\RN} \overline V(\Delta_x U + \Ba U)d\omega_a(X) = \int_{\RN} U\ \overline{(\Delta_x V + \Ba V)}d\omega_a(X);
\]
\item finally, we denote $L^2_a(\RN) =  L^2(\RN,d\omega_a(X))$. 
\end{itemize}

\subsection{Organization of the paper}

Section \ref{S:fullyfra} introduces $\Le^s$, first through the shifted
Schr\"odinger semigroup and then as a Fourier multiplier, and establishes the
equivalence of the two definitions. Section \ref{S:osc} proves Theorem A, and
Section \ref{S:appears} proves Theorem B and derives the Poisson representation
\eqref{U}. Section \ref{S:schwartz} establishes classical solvability of the
extension problem for Schwartz boundary data, and Section \ref{S:L2poi} extends
the oscillatory Poisson operator to the natural $L^2$-based setting, where the
transition at $s=\frac12$ described above takes place. Section \ref{S:inverse}
develops the potential theory associated with $\Le^s$: we construct the inverse
fractional operator, identify its fundamental solution, compute its oscillatory
Poisson lifting, and relate the latter to the boundary Duhamel operator of
\cite{GS2}. Section \ref{S:history} is devoted to the prescribed past and proves
Theorem C. Section \ref{S:wellpos} proves Theorem D. The appendix,
Section \ref{S:app}, collects the Bessel-function and oscillatory-integral
identities used throughout.

\section{The fully fractional Schr\"odinger operator}\label{S:fullyfra}

Throughout this work the notation $S(x,y,t)$ indicates the classical integral kernel of the unitary group $e^{i t\Delta_x}$,
\begin{equation}\label{Scla}
S(x,y,t)
=
(4\pi i t)^{-\frac d2}
e^{i \frac{|x-y|^2}{4t}},
\qquad t\neq 0.
\end{equation}
The solution to the Cauchy problem $\Le u = 0$, $u(x,0) =\vf(x)$, is given by
\[
u(x,t) = \int_{\Rd} S(x,y,t) \vf(y) dy.
\]
For a function $u:\R^{d+1}\to \overline \C$, its Fourier transform is
\begin{equation}\label{fou}
\hat u(\xi,\tau) = \int_{\R^{d+1}} e^{-2\pi i (\tau t + \sa\xi,x\da)} u(x,t) dx dt.
\end{equation}
Consider the semigroup $P^{\Le}_\sigma = e^{-\sigma \Le}$ associated with the Cauchy problem
\begin{equation}\label{PsS2}
\p_\sigma f = - \Le f,\ \ \ \ \ f(x,t,0) = u(x,t).
\end{equation}
For $u\in \mathscr S(\R^{d+1})$, the solution to \eqref{PsS2} is given by
\begin{align}\label{PsS}
f(x,t,\sigma) = P^{\Le}_\sigma u(x,t) & = \int_{\Rd} S(x,y,\sigma) u(y,t-\sigma) dy
 = S(\sigma)u(\cdot,t-\sigma)(x).
\end{align}
Observe that, if  
\begin{equation}\label{p}
p(\xi,\tau)= 2\pi \tau + 4\pi^2|\xi|^2,
\end{equation}
then \eqref{PsS} gives 
\begin{equation}\label{fSF}
\widehat{P^\Le_\sigma u}(\xi,\tau)  = e^{-i \sigma p(\xi,\tau)} \hat u(\xi,\tau),
\end{equation}
and therefore
\begin{equation}\label{map}
P^\Le_\sigma : \mathscr S(\R^{d+1})\ \longrightarrow\ \mathscr S(\R^{d+1}).
\end{equation}
An elementary, yet basic consequence of \eqref{fSF}
 is the following.

\begin{proposition}\label{P:L2}
The semigroup $\{P^\Le_\sigma\}_{\sigma>0}$ extends to a unitary one on $L^2(\RNu)$, i.e., one has 
\begin{equation}\label{Suni}
||P^{\mathcal L}_\sigma u||_{L^2(\R^{d+1})} =  ||u||_{L^2(\R^{d+1})}. 
\end{equation}
\end{proposition}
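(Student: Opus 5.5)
The plan is to read everything off the multiplier formula \eqref{fSF} together with Plancherel's theorem for the Fourier transform \eqref{fou} on $\R^{d+1}$.

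\textbf{Isometry on Schwartz functions.} Fix $u\in\mathscr S(\R^{d+1})$ and $\sigma>0$. By \eqref{map}, $P^\Le_\sigma u\in\mathscr S(\R^{d+1})$, so Plancherel applies to it. Since $p(\xi,\tau)$ in \eqref{p} is real-valued, the factor $e^{-i\sigma p(\xi,\tau)}$ has modulus one. Hence
\[
\|P^\Le_\sigma u\|_{L^2(\R^{d+1})}=\|e^{-i\sigma p}\,\hat u\|_{L^2(\R^{d+1})}=\|\hat u\|_{L^2(\R^{d+1})}=\|u\|_{L^2(\R^{d+1})},
\]
which is \eqref{Suni} on $\mathscr S(\R^{d+1})$.

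\textbf{Extension to $L^2$.} Because $\mathscr S(\R^{d+1})$ is dense in $L^2(\R^{d+1})$, each $P^\Le_\sigma$ extends uniquely by continuity to a linear isometry of $L^2(\R^{d+1})$. This extension coincides with the Fourier multiplier $\mathcal F^{-1}e^{-i\sigma p}\mathcal F$, which is defined on all of $L^2$. The multiplier description gives the remaining properties at once:
\begin{itemize}
\item the operators are surjective, with inverse the multiplier $e^{i\sigma p}$;
\item they form a group, since $e^{-i\sigma p}e^{-i\sigma' p}=e^{-i(\sigma+\sigma')p}$;
\item the adjoint of $P^\Le_\sigma$ is the multiplier $\overline{e^{-i\sigma p}}=e^{i\sigma p}$, which is again its inverse.
\end{itemize}
So the extended family is unitary.

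\textbf{Strong continuity.} For $u\in L^2(\R^{d+1})$,
\[
\|P^\Le_\sigma u-u\|_{L^2}^2=\int_{\R^{d+1}}\big|e^{-i\sigma p(\xi,\tau)}-1\big|^2\,|\hat u(\xi,\tau)|^2\,d\xi\,d\tau .
\]
The integrand tends to $0$ pointwise as $\sigma\to0^+$ and is dominated by $4|\hat u|^2\in L^1$, so the integral tends to $0$ by dominated convergence.

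\textbf{Main point to check.} The argument has no real obstacle; the one thing to verify is \eqref{fSF} itself, i.e.\ that the physical-space formula \eqref{PsS}, a shift in $t$ composed with the free Schr\"odinger propagator, has symbol $e^{-i\sigma p}$. In $x$ the propagator $e^{i\sigma\Delta_x}$ has symbol $e^{-4\pi^2 i\sigma|\xi|^2}$, and the translation $t\mapsto t-\sigma$ has symbol $e^{-2\pi i\sigma\tau}$. Their product is $e^{-i\sigma p}$, consistent with \eqref{fSF}.
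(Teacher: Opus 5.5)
Your proof is correct and takes the same approach as the paper. The paper states the proposition as an immediate consequence of the multiplier formula \eqref{fSF}, namely Plancherel together with $|e^{-i\sigma p}|=1$ for real $p$, and leaves the density extension implicit. Your additional steps (surjectivity, the group law, the adjoint, strong continuity, and the check that \eqref{PsS} has symbol $e^{-i\sigma p}$) are accurate and fill in details the paper omits.
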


Henceforth we denote by  
\begin{equation}\label{Lmasomeno}
L^+ = \{(\xi,\tau)\in \R^{d+1}\mid p(\xi,\tau) >0\},\ \ \ \ L^- = \{(\xi,\tau)\in \R^{d+1}\mid p(\xi,\tau) <0\}.
\end{equation}
 For a set $E$ we will denote by $\mathbf 1_E$ its indicator function.

\begin{definition}\label{D:ff} 
On functions $u\in \mathscr S(\RNu)$, we define the \emph{fully fractional} Schr\"odinger operator as  
\begin{equation}\label{fS}
\mathcal L^s u(x,t) = - \frac{s}{\G(1-s)} \int_0^\infty \frac{1}{\sigma^{1+s}} \left[P^{\Le}_\sigma u(x,t) - u(x,t)\right] d\sigma,\ \ \ \ \ 0<s<1.
\end{equation}
\end{definition}

Next, we observe that if we presently define the parabolic dilations 
\begin{equation}\label{pardila}
\delta_\la u(x,t) = u(\la x,\la^2 t),
\end{equation}
then elementary arguments show that
\begin{equation}\label{Ptdilheat}
P^\Le_\sigma(\delta_\la u)(x,t) = P^\Le_{\la^2 \sigma} u(\la x,\la^2 t).
\end{equation}
One easily obtains from \eqref{Ptdilheat}
\begin{equation}\label{dilfracheat}
\Le^s(\delta_\la u)(x,t) = \la^{2s} \Le^s u(\la x,\la^2 t),
\end{equation}  
which shows that the fully fractional Schr\"odinger operator $\Le^s$ is of order $2s$ with respect to the anisotropic parabolic dilations \eqref{pardila}. 

The following proposition identifies the Fourier symbol of the operator introduced above. In particular, it shows that the semigroup definition coincides with the natural Fourier multiplier $(ip(\xi,\tau))^s$.

\begin{proposition}\label{P:FTfp}
Let $0<s<1$. Then for $u\in \mathscr S(\R^{d+1})$ one has in $\mathscr S'(\R^{d+1})$
\begin{equation}\label{fp1}
\widehat{\mathcal L^{s} u}(\xi,\tau) = \left\{e^{-i\frac{\pi s}2} \mathbf 1_{L^-}(\xi,\tau) + e^{i\frac{\pi s}2} \mathbf 1_{L^+}(\xi,\tau)\right\}|p(\xi,\tau)|^{s} \hat u(\xi,\tau).
\end{equation}
Equivalently, we have
\begin{equation}\label{fp2}
\widehat{\mathcal L^{s} u} = (i p)^s \hat u.
\end{equation}
\end{proposition}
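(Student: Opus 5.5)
We take the Fourier transform of \eqref{fS} and use \eqref{fSF}. Formally this gives
\[
\widehat{\Le^s u}(\xi,\tau)=-\frac{s}{\G(1-s)}\int_0^\infty \frac{e^{-i\sigma p(\xi,\tau)}-1}{\sigma^{1+s}}\,d\sigma\ \hat u(\xi,\tau),
\]
and the proof then has two parts. The first is to justify exchanging the $\sigma$-integral with the Fourier transform. The second is to evaluate the scalar integral
\[
I_s(p)=\int_0^\infty \frac{e^{-i\sigma p}-1}{\sigma^{1+s}}\,d\sigma,\qquad p\in\R\setminus\{0\}.
\]

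\textbf{Justifying the interchange.} For $u\in\mathscr S(\RNu)$, write the integrand in \eqref{fS} as $\sigma^{-1-s}\bigl(P^\Le_\sigma u-u\bigr)$. On $0<\sigma<1$ we use
\[
P^\Le_\sigma u-u=-\int_0^\sigma P^\Le_r\Le u\,dr ,
\]
which by Proposition \ref{P:L2} has $L^2$ norm at most $\sigma\|\Le u\|_{L^2}$. On $\sigma>1$ the $L^2$ norm is at most $2\|u\|_{L^2}$. Together these give a Bochner integral in $L^2(\RNu)$ that converges absolutely, since $s\in(0,1)$.

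The Fourier transform is continuous on $L^2$, so it passes inside the integral, and \eqref{fSF} applies for each $\sigma$. The result is an identity in $L^2$, hence in $\mathscr S'$. Moreover, for a.e. $(\xi,\tau)$ the scalar integral $I_s(p)$ converges absolutely, because $|e^{-i\sigma p}-1|\le\min(\sigma|p|,2)$. This justifies evaluating the multiplier pointwise off the null set $P$.

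\textbf{Evaluating $I_s(p)$.} Scaling $\sigma\mapsto\sigma/|p|$ gives $I_s(p)=|p|^s I_s(\pm1)$, with the sign of $p$. For $p>0$ we need
\[
J=\int_0^\infty \sigma^{-1-s}\bigl(e^{-i\sigma}-1\bigr)\,d\sigma .
\]
We compute $J$ by rotating the contour to the negative imaginary axis, $\sigma=-ir$. This is legitimate because $e^{-i\sigma}$ decays in the lower half-plane and the integrand is $O(|\sigma|^{-s})$ near the origin. Alternatively, compute $\int_0^\infty\sigma^{-1-s}(e^{-\varepsilon\sigma-i\sigma}-1)\,d\sigma$ and let $\varepsilon\to0^+$, using
\[
\int_0^\infty\sigma^{-1-s}(e^{-\lambda\sigma}-1)\,d\sigma=\G(-s)\lambda^s ,\qquad \operatorname{Re}\lambda>0,
\]
which follows by analytic continuation from $\lambda>0$ (integrate by parts to reach $\G(1-s)/s$).

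With $\lambda=i$ this gives $J=\G(-s)e^{i\pi s/2}$. Since $-\frac{s}{\G(1-s)}\G(-s)=1$, the multiplier on $L^+$ is $e^{i\pi s/2}|p|^s$. Taking complex conjugates (the case $\lambda=-i$) gives $e^{-i\pi s/2}|p|^s$ on $L^-$. This yields \eqref{fp1}, and \eqref{fp2} follows because $(ip)^s$ is defined with the principal branch, $\arg(ip)=\pm\pi/2$.

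\textbf{Main obstacle.} The delicate step is the boundary-value limit $\varepsilon\to0^+$ in the scalar formula, i.e. continuity of $\lambda\mapsto\int_0^\infty\sigma^{-1-s}(e^{-\lambda\sigma}-1)\,d\sigma$ up to $\operatorname{Re}\lambda=0$. We handle it by dominated convergence near $\sigma=0$, where the bound $\min(\sigma|\lambda|,2)$ holds uniformly for $\operatorname{Re}\lambda\ge0$. For large $\sigma$ we integrate by parts once, writing $e^{-\lambda\sigma}=-\lambda^{-1}\partial_\sigma e^{-\lambda\sigma}$. This turns the oscillatory tail into an absolutely convergent $\sigma^{-2-s}$ integral, uniformly in $\lambda$ near $\pm i$.
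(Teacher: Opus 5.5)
Your proposal is correct and follows essentially the same route as the paper. You move the Fourier transform inside the $\sigma$-integral using \eqref{fSF} (via an absolutely convergent $L^2$-valued Bochner integral, where the paper uses duality and Fubini), then evaluate the scalar integral by the regularization $\lambda=\varepsilon\pm i$, $\varepsilon\to0^+$, exactly as in Lemma \ref{L:aux}. One simplification: the integration by parts for large $\sigma$ in your last paragraph is unnecessary, because the majorant $\sigma^{-1-s}\min(\sigma|\lambda|,2)$ is already integrable on all of $(0,\infty)$, uniformly for $\operatorname{Re}\lambda\ge0$, $|\lambda|\le2$.
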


\begin{proof}
 We need to show that, for every $\vf\in \mathscr S(\R^{d+1})$, we have
\begin{align}\label{ex}
& \int_{\R^{d+1}} \hat \vf(\xi,\tau) \mathcal L^{s} u(\xi,\tau) d\xi d\tau
\\
& = \int_{\R^{d+1}} \vf(\xi,\tau) \left\{e^{-i\frac{\pi s}2} \mathbf 1_{L^-}(\xi,\tau) + e^{i\frac{\pi s}2} \mathbf 1_{L^+}(\xi,\tau)\right\}|p(\xi,\tau)|^{s} \hat u(\xi,\tau) d\xi d\tau.
\notag
\end{align}
From definition \eqref{fS}  and Fubini, we have
\begin{align*}
& \int_{\R^{d+1}} \hat \vf(\xi,\tau) \mathcal L^{s} u(\xi,\tau) d\xi d\tau  
 = - \frac{s}{\G(1 - s)} \int_{\R^{d+1}} \hat \vf(\xi,\tau) \int_0^\infty \frac{1}{\sigma^{1+s}} \left[P^\mathcal L_\sigma u(\xi,\tau) - u(\xi,\tau)\right] d\sigma d\xi d\tau
\\
& = - \frac{s}{\G(1 - s)} \int_{\R^{d+1}}\vf(\xi,\tau)\int_0^\infty \frac{1}{\sigma^{1+s}} \left[e^{-i \sigma p(\xi,\tau)} - 1\right]d\sigma\  \hat u(\xi,\tau)  d\xi d\tau 
\\
& = - \frac{s}{\G(1 - s)} \int_0^\infty \frac{1}{\sigma^{1+s}}  \int_{L^-} \vf(\xi,\tau) \left[e^{i \sigma |p(\xi,\tau)|} - 1\right]\hat u(\xi,\tau)  d\xi d\tau d\sigma
\\
&   - \frac{s}{\G(1 - s)} \int_0^\infty \frac{1}{\sigma^{1+s}}  \int_{L^+} \vf(\xi,\tau)\left[e^{-i \sigma p(\xi,\tau)} - 1\right]\hat u(\xi,\tau)  d\xi d\tau d\sigma
\\
& = - \frac{s}{\G(1 - s)} \int_{L^-} \vf(\xi,\tau) \hat u(\xi,\tau) \int_0^\infty \frac{1}{\sigma^{1+s}} \left[e^{i \sigma |p(\xi,\tau)|} - 1\right] d\sigma d\xi d\tau
\notag
\\
&  - \frac{s}{\G(1 - s)}  \int_{L^+} \vf(\xi,\tau) \hat u(\xi,\tau) \int_0^\infty \frac{1}{\sigma^{1+s}}\left[e^{- i \sigma p(\xi,\tau)} - 1\right]d\sigma d\xi d\tau.
\end{align*}
Applying Lemma \ref{L:aux} we obtain
 \begin{align*}
& \int_{\R^{d+1}} \hat \vf(\xi,\tau) \mathcal L^{s} u(\xi,\tau) d\xi d\tau
 = e^{-i\frac{\pi s}2} \int_{L^-} \vf(\xi,\tau) \hat u(\xi,\tau) |p(\xi,\tau)|^{s}  d\xi d\tau
\\
& + e^{i\frac{\pi s}2} \int_{L^+} \vf(\xi,\tau) \hat u(\xi,\tau) p(\xi,\tau)^{s}  d\xi d\tau.
\end{align*}
This completes the proof.

\end{proof}

\begin{remark}\label{R:not}
We note explicitly that formula \eqref{fp2} in Proposition \ref{P:FTfp} shows that 
\[
\Le^s:\mathscr S(\R^{d+1})\ \not\longrightarrow\ \mathscr S(\R^{d+1}).
\]
\end{remark}

\begin{definition}\label{D:domain}
Motivated by Proposition~\ref{P:FTfp}, we realize $\Le^s$ as a closed
operator on $L^2(\RNu)$ by setting
\[
\Dom(\Le^s)
=
\left\{u\in L^2(\RNu): |p|^s\widehat u\in L^2(\RNu)\right\},
\qquad
\widehat{\Le^su}:=(ip)^s\widehat u,
\]
where the principal branch is used. On $\mathscr S(\RNu)$ this agrees with the
semigroup definition in Definition~\ref{D:ff}, by Proposition~\ref{P:FTfp}.
We equip $\Dom(\Le^s)$ with the graph norm
\[
\|u\|_{\Dom(\Le^s)}
=
\|u\|_{L^2(\RNu)}+\|\Le^su\|_{L^2(\RNu)}.
\]
\end{definition}
For $0<s<1$, comparison of $1+|p|^s$ with $(1+|p|^2)^{s/2}$ gives
\begin{equation}\label{equigraph}
\|u\|_{\Dom(\Le^s)}
\cong \|(1+|p|^2)^{\frac s2}\widehat u\|_{L^2(\RNu)}.
\end{equation}

 
\subsection{Comparison with Samko's definition}\label{S:samko}

Formula \eqref{fp1} shows that our definition of the nonlocal operators \eqref{fS} coincides with that adopted in \cite[Sec.4, p.296]{Sa}
\begin{equation}\label{sam1}
\Le^s u = \mathscr F^{-1}\left\{(e^{i\frac{\pi s}2} \mathbf 1_{L^+} + e^{-i\frac{\pi s}2} \mathbf 1_{L^-})|p(\xi,\tau)|^{s}\ \hat u\right\},
\end{equation}
see also \cite{Samp} for a previous study concerning the fractional powers $(\p_t -\Delta_x)^s$.
The advantage of the semigroup representation \eqref{fS}, however, is that it provides a direct operator-theoretic definition of $\Le^s$, avoiding the need to address a priori the inversion of its Fourier symbol, which is implicit in the representation
\eqref{sam1}. Subsequently, in the same book, Samko introduces the following alternative definition:
\begin{align}\label{sam}
& (\Delta_x + i \p_t)^s u(x,t) = \frac{e^{i\frac{\pi s }{2}}}{d_{d,1}(s)}\int_0^\infty \frac{1}{\sigma^{1+s}} \int_{\Rd} \tilde \Delta^{(1)}_{y,\sigma} u(x,t) S(y,\sigma) dy d\sigma, 
\end{align} 
where for arbitrary $b>1$, he sets 
\[
d_{d,1}(s) = \G(-s)\left[C^{(1)}_0 - b^{s} C^{(1)}_1\right] = \G(-s)\left[1 - b^{s}\right] = - \frac{\G(1-s)}s \left[1 - b^{s}\right],
\]
and 
\[
\tilde \Delta^{(1)}_{y,\sigma} u(x,t) = u(x-y,t-\sigma) - u(x-\sqrt b y,t - b \sigma).
\]
Using these quantities, after a direct  computation one  obtains
\begin{align*}
& e^{-i\frac{\pi s }{2}}(\Delta_x + i \p_t)^s u(x,t) = \Le^s u(x,t).
\end{align*} 
Hence, Samko's alternative definition \eqref{sam} coincides with the semigroup definition \eqref{fS}.

\vskip 0.2in


\section{The oscillatory extension problem in phase space}\label{S:osc}

In this section we solve the extension problem for the fully fractional Schr\"odinger operator $\Le^s$  by constructing an ad hoc oscillatory \emph{Poisson kernel} in phase space. The relevant analysis serves as motivation and also justification for the material in the remainder of the paper. Our construction should also be compared with the extension theory for fractional powers of the wave operator developed in \cite{EGV} by Enciso, Gonz\'alez, and Vergara, who realized the fractional wave operator as a Dirichlet-to-Neumann map associated with the Klein-Gordon equation in anti-de Sitter space. While their work concerns a hyperbolic operator and exploits the underlying Lorentzian geometry, the present setting is dispersive and is governed by the oscillatory Schr\"odinger semigroup. Accordingly, the analytical structure of the extension problem is fundamentally different.

Our approach, based on the Fourier transform, is  constructive in nature. In Theorem \ref{T:itstheFT} we first solve problem \eqref{expbS} in phase space, see \eqref{supernicehatV0}. In \eqref{onL+0} we identify the Dirichlet-to-Neumann map as the fractional power $\mathcal L^s u$ of the boundary datum. In Theorem \ref{T:invFT} in the next section, we compute the inverse Fourier transform of the multiplier in \eqref{supernicehatV0}. This step amounts to explicitly constructing the Poisson kernel $\mathbb P_a(X,Y_0,t)$ for the problem \eqref{expbS}.

To formulate the extension problem, for a given $0<s<1$ we let $a = 1-2s$, and for $z>0$ we indicate with $X= (x,z)\in \RN$. Given a function $u\in \mathscr S(\R^{d+1}_{x,t})$, we seek a function $U:\R^{d+1}_{x,t}\times \overline{\R^+_z}\to \C$, smooth for $z>0$ and continuous up to $z=0$, which, with a slight abuse of notation, we write $U(X,t) = U(x,z,t)$ instead of $U((x,t),z)$, which solves the following singular Schr\"odinger equation in the half-space $\RN\times \R_t$ with a Dirichlet condition on the boundary $\p \RN\times \R$:
\begin{equation}\label{expbS}
\begin{cases}
z^a \p_t U- i \operatorname{div}_X(z^a \nabla_X U) = 0 \ \ \ \ \ \text{in}\ \ \R^{d+1}\times\R^+,
\\
U(x,0,t) = u(x,t),\ \ \ \ \ \ \ \ \ \ \ \ \ \ \ \ \ \ \ (x,t)\in \RNu.
\end{cases}
\end{equation} 
If we denote by 
\begin{equation}\label{Ba}
\mathscr B_z^{(a)} = \p_{zz}  + \frac az \p_z,\ \ \ \ \ \ -1<a<1,
\end{equation}
the Bessel operator on the half-line $\R^+$, then 
\[
z^a \p_t U - i \operatorname{div}_X(z^a \nabla_X U) = z^a\left\{\p_t U - i \left(\Delta_x U+ \mathscr B_z^{(a)} U\right)\right\}.
\]
Therefore, we can rewrite the problem \eqref{expbS} in the following equivalent form.

\begin{definition}[The extension problem]\label{D:ep}
Given $u\in \mathscr S(\RNu)$, find a function $U\in C^\infty(\RNu\times \R^+)\cap C(\RNu\times [0,\infty))$,  such that
\begin{equation}\label{ep}
\p_t U - i \left(\Delta_x U + \mathscr B_z^{(a)} U\right) = 0,\ \ \ \ \ U(x,0,t) = u(x,t),\ \ \ \  (x,t)\in \R^{d+1}.
\end{equation}
\end{definition}

\subsection{The extension problem in phase space}\label{S:poiFT}

Our first main result solves the extension problem in phase space. Besides providing an explicit formula for the extension, it identifies the associated Dirichlet-to-Neumann map, thereby recovering the fractional Schr\"odinger operator.

Given $u\in \mathscr S(\R^{d+1})$, we denote by  
\[
\widehat U(\xi,z,\tau) = \int_{\R^{d+1}} e^{-2\pi i (\tau t + \sa\xi,x\da)} U(x,z,t) dx dt
\]
the partial Fourier transform with respect to $(x,t)$ of the unknown $U$ in \eqref{expbS}. This  converts the problem into the following ordinary differential equation with initial condition:
\begin{equation}\label{Bah}
\Ba \widehat U(\xi,z,\tau) = p(\xi,\tau) \widehat U(\xi,z,\tau),\ \ \ \ \ \widehat U(\xi,0,\tau) = \hat u(\xi,\tau),\ \ \ \  (\xi,\tau)\in \R^{d+1},
\end{equation}
where $p(\xi,\tau)$ is as in \eqref{p}. The ODE \eqref{Bah} admits two linearly independent
solutions in each of the regions $L_+$ and $L_-$. The relevant solution is selected
by the same analytic prescription on both sides of the characteristic set. For
$\Re p>0$ we take the normalized solution which decays as $z\to\infty$,
\[
m_s(z;p)=\frac{2^{1-s}}{\G(s)}(z\sqrt p)^sK_s(z\sqrt p),
\qquad \Re\sqrt p>0.
\]
On the negative real axis we take its limiting-absorption boundary value from the
lower half-plane,
\begin{equation}\label{E:limiting-absorption}
m_s(z;p):=\lim_{\varepsilon\downarrow0}
\frac{2^{1-s}}{\G(s)}
\bigl(z\sqrt{p-i\varepsilon}\bigr)^s
K_s\bigl(z\sqrt{p-i\varepsilon}\bigr),
\qquad p<0,
\end{equation}
where the square root is chosen with positive real part before taking the limit.
Since $\sqrt{p-i0}=-i|p|^{1/2}$ for $p<0$, this prescription gives precisely the
phase $\theta=-i$ below. In particular, the branch on $L_-$ is fixed independently
of the Dirichlet-to-Neumann identity, which will be obtained as a consequence.

Note that $p(\xi,\tau)$ is the symbol of the modified Schr\"odinger operator
\begin{equation}\label{M}
\M = - i \p_t - \Delta = e^{-i\frac{\pi}2} (\p_t - i \Delta) = e^{-i\frac{\pi}2} \Le,
\end{equation}
and therefore
\begin{equation}\label{spec}
\widehat{\M u}(\xi,\tau) = p(\xi,\tau) \hat u(\xi,\tau).
\end{equation}

With $L^+, L^-$ as in \eqref{Lmasomeno}, we now introduce the following phase function defined on $L^+\cup L^-$:
\begin{equation}\label{phase}
\theta(\xi,\tau) := e^{-i \frac{\pi}4(1-\operatorname{sgn}(p(\xi,\tau))}.
\end{equation}
Notice that 
\[
\theta(\xi,\tau) = \begin{cases}
1,\ \ \ \ \ \ \ \ \ \text{for}\ (\xi,\tau)\in L^+,
\\
-i,\ \ \ \ \ \ \  \text{when}\ (\xi,\tau)\in L^-.
\end{cases}
\]
By
$K_s$ we indicate the Bessel function of the third kind \eqref{Knu}.

\begin{theorem}\label{T:itstheFT}
The solution $U$ of the extension problem \eqref{ep} admits the following representation in phase space
\begin{equation}\label{supernicehatV0}
\widehat U(\xi,z,\tau)  = \frac{2^{1-s}}{ \G(s)} \left(z \theta(\xi,\tau)  |p(\xi,\tau)|^{1/2}\right)^s K_{s}\left(z \theta(\xi,\tau)  |p(\xi,\tau)|^{1/2}\right) \hat u(\xi,\tau)\ \ \ (\xi,\tau)\in L^+\cup L^-, 
\end{equation}
in the sense that one has
\begin{equation}\label{pointwise}
\underset{z\to 0^+}{\lim}\ \widehat U(\xi,z,\tau) = \widehat u(\xi,\tau).
\end{equation}
Moreover, $U$ satisfies the Dirichlet-to-Neumann identity:
\begin{equation}\label{onL+0}
- \frac{2^{2s-1} \G(s)e^{i\frac{\pi s}2} }{\G(1-s)}\ \underset{z\to 0^+}{\lim} z^{1-2s} \widehat U_z(\xi,z,\tau) = (i\,p(\xi,\tau))^s\,\widehat u(\xi,\tau) = \widehat{\Le^s u}(\xi,\tau). 
\end{equation}
\end{theorem}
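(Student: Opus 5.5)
The plan is to work frequency by frequency, treating $(\xi,\tau)\in L^+\cup L^-$ as fixed. Set $w=z\,\theta(\xi,\tau)|p(\xi,\tau)|^{1/2}$, so that $w^2=z^2\theta^2|p|=z^2p$ in both regions, because $\theta^2=1$ on $L^+$ and $\theta^2=-1$ on $L^-$. We use the standard fact that $M_s(w)=w^sK_s(w)$ satisfies
\[
M_s''+\tfrac{1-2s}{w}M_s'-M_s=0.
\]
Combined with the chain rule in $z$ (note $d w/dz=\theta|p|^{1/2}$ and $(\theta|p|^{1/2})^2=p$), this shows that $z\mapsto M_s(z\theta|p|^{1/2})$ solves $\Ba\widehat U=p\,\widehat U$, which is \eqref{Bah}.

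On $L^-$ we must also check that this formula is the limiting-absorption choice \eqref{E:limiting-absorption}. Since $\sqrt{p-i\varepsilon}\to -i|p|^{1/2}$ and $K_s$ is analytic on $\{\Re w>0\}$, it extends continuously to $\arg w=-\pi/2$ away from $0$. So the limit is exactly $m_s(z\theta|p|^{1/2})$ with $\theta=-i$, where the principal power $w^s$ gives $(-i)^s=e^{-i\pi s/2}$.

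For \eqref{pointwise} we use the small-argument expansion of $K_s$ (recalled in the appendix, \eqref{Knu}):
\[
K_s(w)=\tfrac12\Gamma(s)\bigl(\tfrac w2\bigr)^{-s}+\tfrac12\Gamma(-s)\bigl(\tfrac w2\bigr)^{s}+o(|w|^{s}),
\]
which holds uniformly on the closed sector $|\arg w|\le\pi/2$ as $w\to0$ (for $0<s<1$; this comes from $K_s=\frac{\pi}{2}\frac{I_{-s}-I_s}{\sin\pi s}$ and the power series of $I_{\pm s}$). This gives
\[
\frac{2^{1-s}}{\Gamma(s)}M_s(w)=1+\frac{\Gamma(-s)}{\Gamma(s)}2^{-2s}w^{2s}+O(|w|^2).
\]
Letting $z\to0^+$ yields \eqref{pointwise}.

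For the Neumann derivative we differentiate using the identity $\frac{d}{dw}\bigl(w^sK_s(w)\bigr)=-w^sK_{s-1}(w)=-w^sK_{1-s}(w)$. This gives
\[
z^{1-2s}\partial_z\widehat U=-\frac{2^{1-s}}{\Gamma(s)}z^{1-2s}\bigl(\theta|p|^{1/2}\bigr)w^sK_{1-s}(w)\,\hat u .
\]
Since $K_{1-s}(w)\sim\frac12\Gamma(1-s)(w/2)^{s-1}$, we get $w^sK_{1-s}(w)\to 2^{-s}\Gamma(1-s)\,w^{2s-1}$ to leading order. Writing $w^{2s-1}=z^{2s-1}(\theta|p|^{1/2})^{2s-1}$, the limit becomes
\[
\lim_{z\to0^+}z^{1-2s}\widehat U_z=-\frac{2^{1-2s}\Gamma(1-s)}{\Gamma(s)}\,(\theta|p|^{1/2})^{2s}\,\hat u .
\]
Here $(\theta|p|^{1/2})^{2s}=\theta^{2s}|p|^s$, using the principal powers consistently, which is legitimate since $\arg(\theta|p|^{1/2})\in\{0,-\pi/2\}$. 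This equals $|p|^s$ on $L^+$ and $e^{-i\pi s}|p|^s$ on $L^-$.

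Multiplying by $-\frac{2^{2s-1}\Gamma(s)e^{i\pi s/2}}{\Gamma(1-s)}$ then gives $e^{i\pi s/2}|p|^s$ on $L^+$ and $e^{-i\pi s/2}|p|^s$ on $L^-$. This is $(ip)^s\hat u$, which equals $\widehat{\Le^su}$ by Proposition \ref{P:FTfp}, and proves \eqref{onL+0}.

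The main obstacle is the boundary ray $\arg w=-\pi/2$. There the asymptotics of $K_s$ and $K_{1-s}$ near $0$, and the identity $(w^{\alpha})^{\beta}=w^{\alpha\beta}$, must be justified uniformly, with care taken over branches, since $w$ lies on the edge of the principal domain. I would handle this by working with the series representation of $I_{\pm\nu}$, which is entire up to the factor $w^{\pm\nu}$. This makes the expansions valid on the closed half-plane, and the only branch issue is the single factor $w^{\nu}$ with $\arg w\in\{0,-\pi/2\}$, where $(e^{-i\pi/2})^{\nu}=e^{-i\pi\nu/2}$ is unambiguous.
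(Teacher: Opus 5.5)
Your proposal is correct and follows the paper's proof essentially step by step. The shared steps are the phase-space ODE, the identification with the limiting-absorption branch on $L^-$, the small-argument asymptotics of $K_s$ for the Dirichlet limit, the recurrence $\frac{d}{dw}(w^sK_s(w))=-w^sK_{1-s}(w)$ with $w^{1-s}K_{1-s}(w)\to 2^{-s}\Gamma(1-s)$ for the Neumann limit, and the branch bookkeeping $\theta^{2s}\in\{1,e^{-i\pi s}\}$.

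The differences are cosmetic. You verify the formula uniformly through $w^2=z^2p$, whereas the paper derives it on $L^+$ from $z^s[AI_s+BK_s]$ with $A=0$. Also, the ``obstacle'' you flag is milder than you suggest: the ray $\arg w=-\pi/2$ lies inside $|\arg w|<\pi$, where $K_\nu$ is analytic and \eqref{zeroKnu} applies directly.
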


\begin{proof}
To solve \eqref{Bah}, we distinguish the cases $(\xi,\tau)\in L^+$ and $(\xi,\tau)\in L^-$. 

\vskip 0.2in

\noindent \underline{Case $(\xi,\tau)\in L^+$}: Keeping in mind that $a=1-2s$ we rewrite \eqref{Bah} as 
\begin{equation}\label{ppav}
z^2 \widehat U_{zz} + (1-2s) z \widehat U_{z} - z^2 |p(\xi,\tau)| \widehat U = 0,\ \ \ \ \ \widehat U(\xi,0,\tau) = \hat u(\xi,\tau).
\end{equation}
This is a modified Bessel equation such as \eqref{modgenbessel}, in which we have taken:
\begin{equation}\label{alfabeta}
\alpha = s,\ \ \gamma = 1,\ \ \nu =  \pm s,\ \ \ \beta^2 = |p(\xi,\tau)|>0.
\end{equation}  
The general solution to \eqref{ppav} is given by a linear combination of the  functions $I_s$ and $K_s$, modified according to \eqref{besselcv}. We thus find  
\begin{equation}\label{notsonicehat}
\widehat U(\xi,z,\tau) = z^{s} \left[A(\xi,\tau) I_{s}(z |p(\xi,\tau)|^{1/2})+ B(\xi,\tau) K_{s}(z |p(\xi,\tau)|^{1/2})\right].
\end{equation}
By the selection principle described above, we have
$A(\xi,\tau) = 0,$
 and therefore  
\begin{align}\label{nicerhatV}
\widehat U(\xi,z,\tau) & = B(\xi,\tau) z^{s} K_{s}\left(z |p(\xi,\tau)|^{1/2}\right). 
\end{align}
Otherwise, from the asymptotic relations \eqref{ab} and \eqref{abK}, we would infer that $\widehat U(\xi,z,\tau)\to +\infty$ exponentially in $z$, as $z\to \infty$. 
From \eqref{nicerhatV} and from the asymptotic relation \eqref{zeroKnu}, we know that
\[
\widehat U(\xi,z,\tau)\ \underset{z\to 0^+}{\longrightarrow}\  B(\xi,\tau) |p(\xi,\tau)|^{-\frac{s}2} \frac{\G(s)}{2^{1-s}}.
\]
On the other hand, the Dirichlet condition $\widehat U(\xi,z,\tau) \underset{z\to 0^+}{\longrightarrow} \hat u(\xi,\tau)$ imposes at every $(\xi,\tau) \in L^+$ that
\[
B(\xi,\tau) = \frac{2^{1-s}}{ \G(s)}|p(\xi,\tau)|^{\frac{s}2} \hat u(\xi,\tau).
\]
We thus obtain from \eqref{nicerhatV}:
\begin{equation}\label{supernicehatV}
\widehat U(\xi,z,\tau)  = \frac{2^{1-s}}{ \G(s)} M_{s}\left(z |p(\xi,\tau)|^{1/2}\right) \hat u(\xi,\tau),
\end{equation}
where we have let $M_s(w) = w^{s} K_{s}(w)$. This proves \eqref{supernicehatV0} when $(\xi,\tau)\in L^+$.

We next want to understand the limit as $z\to 0^+$ of $z^{1-2s} \widehat U_z(\xi,z,\tau)$. 
Since \eqref{recuu} gives
\begin{equation*}
M_s'(w) = - w^{2s-1} M_{1-s}(w),
\end{equation*}
we obtain from \eqref{supernicehatV}:
\begin{align}\label{DtoN}
z^{1-2s} \widehat U_z(\xi,z,\tau) & = \frac{2^{1-s}}{ \G(s)}z^{1-2s} |p(\xi,\tau)|^{1/2}  M'_{s}\left(z |p(\xi,\tau)|^{1/2}\right) \hat u(\xi,\tau)
\\
& = -  \frac{2^{1-s} }{ \G(s)} z^{1-2s}  |p(\xi,\tau)|^{1/2} (|p(\xi,\tau)|^{1/2}z)^{2s-1} M_{1-s}(z |p(\xi,\tau)|^{1/2})\hat u(\xi,\tau)
\notag\\
& = -  \frac{2^{1-s}}{ \G(s)} |p(\xi,\tau)|^{s}  M_{1-s}(z |p(\xi,\tau)|^{1/2})\hat u(\xi,\tau).
\notag
\end{align}
Keeping in mind that \eqref{zeroKnu} gives
\[
M_{1-s}(z |p(\xi,\tau)|^{1/2})\ \underset{z\to 0^+}{\longrightarrow}\ 2^{-s}\G(1-s),
\]
we obtain from \eqref{DtoN}
\begin{equation}\label{DtoN2}
z^{1-2s} \widehat U_z(\xi,z,\tau)\ \underset{z\to 0^+}{\longrightarrow}\ -  \frac{2^{1-2s}\G(1-s)}{ \G(s)} |p(\xi,\tau)|^{s}\ \hat u(\xi,\tau).
\end{equation}
Comparing the right-hand side of \eqref{DtoN2} with \eqref{fp1} in Proposition \ref{P:FTfp}, we conclude that for any $(\xi,\tau)\in L^+$ we have
\begin{equation}\label{DtoN3}
z^{1-2s} \widehat U_z(\xi,z,\tau)\ \underset{z\to 0^+}{\longrightarrow}\ -  \frac{2^{1-2s}\G(1-s)}{ \G(s)} e^{-i\frac{\pi s}2} \widehat{\Le^s u}(\xi,\tau)
\end{equation}
Equivalently, we can write \eqref{DtoN3}
\begin{equation}\label{onL+}
- \frac{ \G(s)e^{i\frac{\pi s}2} }{2^{1-2s}\G(1-s)}\ \underset{z\to 0^+}{\lim} z^{1-2s} \widehat U_z(\xi,z,\tau) = \widehat{\Le^s u}(\xi,\tau), 
\end{equation}
which proves \eqref{onL+0}.

\vskip 0.2in

\noindent \underline{Case $(\xi,\tau)\in L^-$}: Here $p(\xi,\tau)<0$. Rather than
choosing a linear combination of $J_s$ and $J_{-s}$ by prescribing its Neumann
trace, we use the limiting-absorption prescription \eqref{E:limiting-absorption}.
Since
\[
\sqrt{p(\xi,\tau)-i0}=-i|p(\xi,\tau)|^{1/2},
\]
it gives directly
\begin{equation}\label{oplada}
\widehat U(\xi,z,\tau)
=
\frac{2^{1-s}}{\G(s)}
\bigl(-iz|p(\xi,\tau)|^{1/2}\bigr)^s
K_s\bigl(-iz|p(\xi,\tau)|^{1/2}\bigr)
\widehat u(\xi,\tau).
\end{equation}
This is exactly \eqref{supernicehatV0} on $L^-$, because $\theta=-i$ there.
The small-argument asymptotic
\[
w^sK_s(w)\longrightarrow 2^{s-1}\G(s),
\qquad w\to0,
\]
along the ray $\arg w=-\pi/2$ shows at once that
$\widehat U(\xi,z,\tau)\to\widehat u(\xi,\tau)$ as $z\to0^+$.

It remains to compute the Neumann trace. Using
\[
\frac{d}{dw}\bigl(w^sK_s(w)\bigr)=-w^sK_{s-1}(w)
=-w^sK_{1-s}(w)
\]
and
\[
w^{1-s}K_{1-s}(w)\longrightarrow 2^{-s}\G(1-s),
\qquad w\to0,
\]
we obtain, with $w=-iz|p|^{1/2}$,
\[
\lim_{z\to0^+}z^{1-2s}\partial_z\widehat U(\xi,z,\tau)
=-\frac{2^{1-2s}\G(1-s)}{\G(s)}
(-i)^{2s}|p(\xi,\tau)|^s\widehat u(\xi,\tau).
\]
Since $(-i)^{2s}=e^{-i\pi s}$ and, on $L^-$,
\[
\widehat{\Le^su}(\xi,\tau)=e^{-i\pi s/2}|p(\xi,\tau)|^s\widehat u(\xi,\tau),
\]
it follows that
\[
-\frac{\G(s)e^{i\pi s/2}}{2^{1-2s}\G(1-s)}
\lim_{z\to0^+}z^{1-2s}\partial_z\widehat U(\xi,z,\tau)
=\widehat{\Le^su}(\xi,\tau).
\]
Thus the Dirichlet-to-Neumann identity follows from the independently selected
outgoing boundary value, rather than being used to choose it. This proves
\eqref{supernicehatV0} and \eqref{onL+0} on $L^-$.

\end{proof}

\section{The Poisson kernel appears} 
\label{S:appears}

The explicit phase-space representation obtained in Theorem \ref{T:itstheFT} naturally leads to a representation in the physical variables. In our second main result, Theorem \ref{T:invFT}, we identify the oscillatory Poisson kernel associated with the extension problem by computing the inverse Fourier transform of the multiplier in \eqref{supernicehatV0}.

We will need the following auxiliary oscillatory lemma.

\begin{lemma}\label{L:critical}
Let $0<s<1$, $z>0$, and $p\in\mathbb{R}\setminus\{0\}$. Then
\[
I_s(z,p)
:=
\int_0^\infty \frac{e^{i\frac{z^2}{4\sigma}}}{\sigma^{1+s}}
e^{-ip\sigma}\,d\sigma = \begin{cases}
2^{s+1}z^{-s} e^{i \frac{\pi s}2} p^{\frac s2} K_s(z p^{1/2}),\ \ \ \ \ \ \ \ \ \ p>0,
\\
2^{s+1}z^{-s}  |p|^{\frac s2} K_s(z e^{-i \frac{\pi }2}|p|^{1/2}),\ \ \ \ \ \ p<0.
\end{cases}
\]
\end{lemma}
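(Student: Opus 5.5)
The plan is to reduce $I_s(z,p)$ to the classical integral representation of the Macdonald function,
\begin{equation*}
\int_0^\infty t^{\nu-1}e^{-\frac{\beta}{t}-\gamma t}\,dt
=2\Bigl(\frac{\beta}{\gamma}\Bigr)^{\frac\nu2}K_\nu\bigl(2\sqrt{\beta\gamma}\bigr),
\qquad \Re\beta>0,\ \Re\gamma>0,
\end{equation*}
where principal branches are used. The formula is valid because in this range $\arg\beta,\arg\gamma\in(-\frac\pi2,\frac\pi2)$. I take it as known, or as among the Bessel identities collected in Section \ref{S:app}. We have $\nu=-s$ and $K_{-s}=K_s$. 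Formally $I_s$ corresponds to $\beta=-i\frac{z^2}{4}$ and $\gamma=ip$, which lie on the boundary of the admissible region. So I regularize with $\varepsilon>0$ and set
\[
\beta_\varepsilon=\varepsilon-i\tfrac{z^2}{4},\qquad \gamma_\varepsilon=\varepsilon+ip,
\]
\[
I_s^\varepsilon(z,p)=\int_0^\infty \sigma^{-1-s}e^{-\frac{\beta_\varepsilon}{\sigma}-\gamma_\varepsilon\sigma}\,d\sigma
=2\Bigl(\frac{\beta_\varepsilon}{\gamma_\varepsilon}\Bigr)^{-\frac s2}K_s\bigl(2\sqrt{\beta_\varepsilon\gamma_\varepsilon}\bigr).
\]
Then I let $\varepsilon\downarrow0$ on both sides.

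\emph{Right-hand side.} As $\varepsilon\downarrow0$, $\arg\beta_\varepsilon\to-\frac\pi2$. Also $\arg\gamma_\varepsilon\to\frac\pi2$ if $p>0$ and $\to-\frac\pi2$ if $p<0$. The principal powers are continuous along these approaches, and so is $K_s$ on $\{|\arg w|\le\frac\pi2,\ w\ne0\}$. Hence the limit is computed by inserting the limiting arguments.

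For $p>0$ the ratio is $\beta/\gamma=\frac{z^2}{4p}e^{-i\pi}$. Its power is $(\beta/\gamma)^{-s/2}=2^sz^{-s}p^{s/2}e^{i\frac{\pi s}2}$, and $2\sqrt{\beta\gamma}=zp^{1/2}$.

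For $p<0$ the ratio is $\beta/\gamma=\frac{z^2}{4|p|}$, which is real positive. Its power gives $2^sz^{-s}|p|^{s/2}$, and $2\sqrt{\beta\gamma}=2\sqrt{\tfrac{z^2}{4}|p|\,e^{-i\pi}}=z|p|^{1/2}e^{-i\frac\pi2}$.

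With the factor $2$ in front, both cases give exactly the two stated formulas. The phase bookkeeping here, in particular $e^{-i\pi}$ versus $e^{i\pi}$, is dictated by the $\varepsilon$-regularization, and this is what makes it unambiguous. It is consistent with the limiting-absorption choice \eqref{E:limiting-absorption}.

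\emph{Left-hand side.} I must show $I_s^\varepsilon\to I_s$, and that $I_s$ converges as an improper integral. I split $\int_0^\infty=\int_0^1+\int_1^\infty$.

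On $[1,\infty)$ the integrand is bounded by $\sigma^{-1-s}$, uniformly in $\varepsilon\in(0,1]$. Dominated convergence therefore applies directly.

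On $(0,1]$ I substitute $u=1/\sigma$, which gives
\[
\int_1^\infty u^{s-1}e^{i\frac{z^2}{4}u}\,e^{-\varepsilon u}\,e^{-(\varepsilon+ip)/u}\,du .
\]
The factor $e^{-(\varepsilon+ip)/u}=1+O(1/u)$ uniformly in $\varepsilon$, so its deviation from $1$ contributes a term dominated by $u^{s-2}$, which is integrable. That term is again handled by dominated convergence.

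What remains is $\int_1^\infty u^{s-1}e^{i\frac{z^2}{4}u}e^{-\varepsilon u}\,du$. Here $u^{s-1}$ decreases monotonically to $0$ because $0<s<1$, and $z>0$, so the undamped integral converges by Dirichlet's test. By Abel's theorem for integrals, the $\varepsilon$-damped integral converges to the undamped one as $\varepsilon\downarrow0$. Concretely, I integrate by parts once, using $e^{i\frac{z^2}{4}u}=\frac{4}{iz^2}\partial_u e^{i\frac{z^2}{4}u}$, which produces absolutely convergent integrals uniformly in $\varepsilon$.

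This justification of the boundary limit near $\sigma=0$ is the main (mild) obstacle, together with checking the continuity of branches. Everything else is the classical formula.
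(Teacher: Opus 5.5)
Your proposal is correct and follows essentially the paper's route: regularize so that Schl\"afli's integral $\int_0^\infty t^{\nu-1}e^{-\beta/t-\gamma t}\,dt=2(\beta/\gamma)^{\nu/2}K_\nu(2\sqrt{\beta\gamma})$ applies with $\Re\beta,\Re\gamma>0$, then take the one-sided limits of the principal branches, which you compute correctly for both $p>0$ and $p<0$. The differences are cosmetic: the paper first substitutes $x=z^2/(4\sigma)$ and uses order $s$ with two parameters $\delta,\varepsilon$, while you stay in $\sigma$ with order $-s$ and $K_{-s}=K_s$; your Abel-type argument that $I^\varepsilon_s\to I_s$ near $\sigma=0$ also supplies a justification that the paper's proof omits.
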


\begin{proof}
The change of variable $x = \frac{z^2}{4\sigma}$ gives
\[
I_s(z,p) = 2^{2s}z^{-2s} \int_0^\infty x^{s-1} e^{ix} e^{-i\frac{pz^2}{4x}} dx = 2^{2s}z^{-2s} \underset{\delta, \ve\to 0^+}{\lim}\ \int_0^\infty x^{s-1} e^{-\frac{\beta}x - \gamma x} dx,
\]
where for $\delta, \ve>0$ we let
\[
\beta = \delta + i \frac{p z^2}{4},\ \ \ \ \ \gamma = \ve - i.
\]
Since $\Re\beta=\delta>0$ and $\Re\gamma=\varepsilon>0$, we may apply Schl\"afli's identity, see \cite[9., 3.471]{GR}
\begin{equation}\label{reg}
\int_0^\infty x^{s-1} e^{-\frac{\beta}{x} - \gamma x} dx = 2 \left(\frac{\beta}{\gamma}\right)^{\frac s2} K_s(2\sqrt{\beta \gamma}),
\end{equation}
to find
\begin{equation}\label{Isp}
I_s(z,p) =  2^{2s}z^{-2s} \underset{\delta, \ve\to 0^+}{\lim}\ 2 \left(\frac{\beta}{\gamma}\right)^{\frac s2} K_s(2\sqrt{\beta \gamma})
\end{equation}
Observe that as $\delta,\ve \to 0^+$
\[
\beta=\delta+i\frac{pz^2}{4}\longrightarrow i\frac{pz^2}{4},
\qquad
\gamma=\ve-i\longrightarrow -i.
\]
Hence
\[
\frac{\beta}{\gamma}
\longrightarrow
-\frac{pz^2}{4},
\qquad
\beta\gamma
\longrightarrow
\frac{pz^2}{4}.
\]
Since the complex powers and square roots are taken with respect to the principal branch, we find 
\begin{align*}
& \frac \beta{\gamma} = \left(-\frac{pz^2}{4(1+\ve^2)} + \frac{\delta \ve}{1+\ve^2}\right) + i \left(\frac{\delta}{1+\ve^2} + \frac{\ve p z^2}{4(1+\ve^2)}\right)\ \longrightarrow\ \begin{cases} e^{i\pi} \frac{|p| z^2}4,\ \ \ \ \text{if}\ p>0,
\\
\frac{|p| z^2}4,\ \ \ \ \ \ \ \ \text{if}\ p<0,
\end{cases}
\\
& \beta \gamma = \left(\frac{pz^2}{4} +\delta \ve\right) + i \left(\ve \frac{pz^2}{4}- \delta\right)\ \longrightarrow\ \begin{cases}  \frac{|p| z^2}4,\ \ \ \ \ \ \ \ \ \ \text{if}\ p>0,
\\
e^{-i\pi} \frac{|p| z^2}4,\ \ \ \ \ \text{if}\ p<0.
\end{cases}
\end{align*}
We thus obtain from \eqref{Isp}
\[
I_s(z,p) = \begin{cases}
2^{s+1}z^{-s} e^{i \frac{\pi s}2} p^{\frac s2} K_s(z \sqrt{p}),\ \ \ \ \ \ \ \ \ \ p>0,
\\
2^{s+1}z^{-s}  |p|^{\frac s2} K_s(z e^{-i \frac{\pi }2}\sqrt{|p|}),\ \ \ \ \ p<0.
\end{cases}
\]
This gives the desired conclusion.

\end{proof}

\begin{remark}\label{R:critical}
Lemma~\ref{L:critical} may also be read in the sense of tempered
distributions in the variable $p\in \R$.
Indeed, for $\varepsilon,\delta>0$, define
\[
I_{s,\varepsilon,\delta}(z,p)
:= \int_0^\infty x^{s-1} e^{-\frac{\beta}x - \gamma x} dx,
\]
where $\beta = \beta(\delta), \gamma = \gamma(\ve)$ are as before.
Then $I_{s,\varepsilon,\delta}(z,\cdot)\in\mathscr S'(\mathbb R)$, and
the family converges in $\mathscr S'(\mathbb R)$, as
$\varepsilon,\delta\to0^+$, to the oscillatory kernel $I_s(z,p)$.
Consequently, the identity in Lemma~\ref{L:critical} holds as an identity
in $\mathscr S'(\mathbb R)$, with the pointwise formula for $p\neq0$
as its representative.
\end{remark}

We are now ready to prove the main result of this section.

\begin{theorem}\label{T:invFT}
For $z>0$ and $(\xi,\tau)\in L^+\cup L^-$, with $\theta(\xi,\tau)$ as in \eqref{phase}, let
\begin{equation}\label{ms}
m_s(z;\xi,\tau) := \frac{2^{1-s}}{ \G(s)} \left(z \theta(\xi,\tau)  |p(\xi,\tau)|^{1/2}\right)^s K_{s}\left(z \theta(\xi,\tau)  |p(\xi,\tau)|^{1/2}\right).
\end{equation}
Then, setting $a = 1-2s$, we have in $\mathscr S'(\R^{d+1})$
\begin{equation}\label{invFTFs}
\mathscr F^{-1}_{(\xi,\tau)\to (x,t)}(m_s(z;\cdot))
=
P_a(z,t)\,S(x,t),
\end{equation}
where  
\begin{equation}\label{Poi}
P_a(z,t)
=
\frac{e^{\,i\frac{(a-1)\pi}{4}}}{2^{1-a}\Gamma\!\left(\frac{1-a}{2}\right)}
\frac{z^{1-a}}{t^{\frac{3-a}{2}}}
e^{i\frac{z^2}{4t}}\mathbf 1_{(0,\infty)}(t).
\end{equation}
\end{theorem}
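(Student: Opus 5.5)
Rather than inverting $m_s$ directly, we will compute the forward Fourier transform of the proposed kernel $P_a(z,t)S(x,t)$ and check that it equals $m_s(z;\xi,\tau)$ in $\mathscr S'(\R^{d+1})$.

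\textbf{Fourier transform in $x$.} For fixed $t>0$, the Schr\"odinger kernel $S(\cdot,t)$ from \eqref{Scla} is the kernel of $e^{it\Delta_x}$. By the convention \eqref{fou}, its Fourier transform in $x$ is therefore $e^{-4\pi^2 i t|\xi|^2}$, uniformly bounded in $(\xi,t)$.

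\textbf{Fourier transform in $t$.} Since $P_a(z,\cdot)$ is supported in $t>0$, we then have, formally,
\[
\mathscr F(P_aS)(\xi,\tau)=\frac{e^{i\frac{(a-1)\pi}{4}}z^{1-a}}{2^{1-a}\G(\frac{1-a}2)}\int_0^\infty \frac{e^{i\frac{z^2}{4t}}}{t^{1+s}}e^{-i t(2\pi\tau+4\pi^2|\xi|^2)}\,dt .
\]
Here we used $\frac{3-a}{2}=1+s$, $1-a=2s$ and $\frac{1-a}2=s$. The $t$-integral is exactly $I_s(z,p(\xi,\tau))$ from Lemma \ref{L:critical}.

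\textbf{Checking the constants.} Substituting the lemma's formula, the prefactor is $\frac{e^{-i\pi s/2}z^{2s}}{2^{2s}\G(s)}$. There are two cases.
\begin{itemize}
\item On $L^+$, the product is
\[
\frac{e^{-i\pi s/2}z^{2s}}{2^{2s}\G(s)}\,2^{s+1}z^{-s}e^{i\pi s/2}p^{s/2}K_s(zp^{1/2})=\frac{2^{1-s}}{\G(s)}(zp^{1/2})^sK_s(zp^{1/2}),
\]
which matches \eqref{ms} with $\theta=1$.
\item On $L^-$, we get $\frac{2^{1-s}}{\G(s)}e^{-i\pi s/2}(z|p|^{1/2})^sK_s(-iz|p|^{1/2})$. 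Since $(-iz|p|^{1/2})^s=e^{-i\pi s/2}(z|p|^{1/2})^s$ under the principal branch, this matches \eqref{ms} with $\theta=-i$.
\end{itemize}
Thus the constants and phases agree on both sides of $P$. This sign bookkeeping is routine but must be done carefully.

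\textbf{Distributional justification (the main obstacle).} The integral $\int_0^\infty t^{-1-s}e^{iz^2/4t}e^{-itp}\,dt$ converges only conditionally at $t=0$, where the phase $z^2/4t$ oscillates, and only absolutely at $\infty$ since $s>0$. Also, $P_aS$ is not integrable in $(x,t)$. We therefore have to justify the identity by pairing with test functions.

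We will regularize by replacing $e^{iz^2/4t}$ with $e^{-(\delta-iz^2/4)/t}$ and inserting the factor $e^{-\ve t}$, exactly as in the proof of Lemma \ref{L:critical}. The regularized kernel $K_{\ve,\delta}$ has the following properties.
\begin{itemize}
\item It is integrable in $t$.
\item Its pairing $\langle K_{\ve,\delta},\hat\varphi\rangle$ can be computed by Fubini, first in $x$ using the Gaussian/Fresnel formula, then in $t$.
\item By Schl\"afli's identity \eqref{reg}, this pairing becomes $\int m_{s,\ve,\delta}\varphi$, where $m_{s,\ve,\delta}$ is the corresponding Bessel expression with the complex arguments $\beta,\gamma$.
\end{itemize}

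Two limits as $\ve,\delta\to0^+$ then remain.
\begin{itemize}
\item \emph{Physical side.} We need $\langle K_{\ve,\delta},\hat\varphi\rangle\to\langle P_aS,\hat\varphi\rangle$. After the $x$-integration, this reduces to a one-dimensional integral in $t$ against $t^{-1-s}e^{iz^2/4t}g(t)$, with $g$ bounded and Schwartz in $t$. Dominated convergence applies because $|e^{-\delta/t}|\le1$ and $t^{-1-s}$ is integrable away from $0$; near $0$, the factor $e^{-\delta/t}$ and the oscillation are controlled by integrating by parts in $1/t$.
\item \emph{Phase side.} We need $m_{s,\ve,\delta}\to m_s$ pointwise on $L^+\cup L^-$ (already shown in Lemma \ref{L:critical}), together with a uniform polynomial bound in $p$. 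For the bound we plan to use the asymptotics of $w^sK_s(w)$: it is bounded near $0$, and in the closed right half-plane $|w^sK_s(w)|\lesssim|w|^{s-1/2}$ at infinity. This gives a tempered majorant, so dominated convergence in $\mathscr S'$ applies, in the spirit of Remark \ref{R:critical}. The paraboloid $P$ has measure zero and can be ignored.
\end{itemize}
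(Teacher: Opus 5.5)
Your proposal is correct and is essentially the paper's argument run in the forward direction. Both proofs reduce to the oscillatory $t$-integral of Lemma \ref{L:critical}, combined with the Fourier transform $e^{-4\pi^2 it|\xi|^2}$ of $S(\cdot,t)$, and both are justified by a regularization and a duality pairing. The one real difference is the regularization: you use the Abel/Schl\"afli damping already present in Lemma \ref{L:critical}, so the regularized multiplier is an explicit Bessel expression controlled by $|w^sK_s(w)|\lesssim 1+|w|^{(s-\frac12)_+}$ on $\Re w\ge 0$, whereas the paper uses the compact cutoff $\chi_R$ in $t$ and leaves the convergence $m_{s,R}\to m_s$ in $\mathscr S'(\R^{d+1})$ as routine. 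In both versions the pairing with $P_aS$ must be read as the iterated integral ($x$ first, then improper in $t$), made convergent near $t=0$ by the integration by parts you describe.
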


\begin{proof}
In order to compute $\mathscr F^{-1}(m_s(z;\cdot))$ in $\mathscr S'(\RNu)$, given $\Phi\in\mathscr S(\mathbb R^{d+1})$, we use the duality 
\[
\langle \mathscr F^{-1}(m_s(z;\cdot)),\Phi\rangle
=
\langle m_s(z;\cdot),\mathscr  F^{-1}\Phi\rangle.
\]
Thanks to Lemma~\ref{L:critical} and Remark \ref{R:critical}, the multiplier admits the oscillatory representation
\[
m_s(z;\xi,\tau)
= \frac{e^{-i\frac{\pi s}2}}{4^s\Gamma(s)} z^{2s}
\int_0^\infty
\frac{1}{t^{1+s}}
e^{i \frac{z^2}{4t}}
e^{-i t p(\xi,\tau)}\,dt,
\]
in the sense of $\mathscr S'(\RNu)$.
Let now $\eta\in C^\infty(0,\infty)$ be such that $0\le\eta\le1$, $\eta\equiv0$ on $(0,1]$ and $\eta\equiv1$ on $[2,\infty)$, and for any $R>1$ define $\chi_R(t) = \eta(Rt)\bigl(1-\eta(\frac tR)\bigr)$. Then $0\le\chi_R\le1$, $\chi_R\equiv 1$ on $[\frac2R,R]$ and $\operatorname{supp}\chi_R\subseteq[\frac1R,2R]$; in particular, for every fixed $t>0$ we have $\chi_R(t)\to 1$ as $R\to \infty$, and $\chi_R(t) = 0$ for $t \le \frac{1}{R}$ or $t\ge 2R$. Define
\[
m_{s,R}(z;\xi,\tau)
=
\frac{e^{-i\frac{\pi s}2}}{4^s\Gamma(s)} z^{2s}
\int_0^\infty
\frac{\chi_R(t)}{
t^{1+s}}
e^{i \frac{z^2}{4t}}
e^{-i t p(\xi,\tau)}\,dt.
\]
It is not difficult to check that for every fixed $z>0$,  one has
\[
m_{s,R}(z;\cdot)\ \underset{R\to \infty}{\longrightarrow}\ m_s(z;\cdot)\ \ \ \ \ \text{in}\ \mathscr S'(\RNu).
\]
Since for each fixed $R$, the integral defining $m_{s,R}(z;\cdot)$ is absolutely convergent, we can exchange order of integration. Hence,
\begin{equation}\label{reg-cutoff}
\langle \mathscr F^{-1}(m_{s,R}(z;\cdot)),\Phi\rangle
=
\frac{e^{-i\frac{\pi s}2}}{4^s\Gamma(s)} z^{2s}
\int_0^\infty
\frac{\chi_R(t)}{
t^{1+s}}
e^{i \frac{z^2}{4t}}
\langle e^{-i t p(\xi,\tau)},\mathscr F^{-1}(\Phi)\rangle
\,dt.
\end{equation}
We now claim that for any fixed $t>0$ we have
\begin{equation}\label{beauty}
\langle e^{-i t p(\xi,\tau)},\mathscr F^{-1}(\Phi)\rangle = (4\pi i t)^{-\frac d2} \int_{\Rd} e^{i\frac{|x|^2}{4t}} \Phi(x,t) dx.
\end{equation}
To check the claim, write
\[
e^{-i t p(\xi,\tau)}
=
e^{-2\pi i t\tau}\,e^{-4\pi^2 i t |\xi|^2}.
\]
First taking the inverse Fourier transform in the time variable, we find:
\[
\langle e^{-i t p(\xi,\tau)},\mathscr F^{-1}(\Phi)\rangle = \int_{\mathbb R}
e^{-2\pi i t \tau} \mathscr F^{-1}_{t\to\tau}\int_{\Rd} e^{-4\pi^2 i t |\xi|^2}
\mathscr F^{-1}_{x\to\xi}\Phi(\xi,\tau)\,d\tau
= \int_{\Rd} e^{-4\pi^2 i t |\xi|^2}
\mathscr F^{-1}_{x\to\xi}\Phi(\xi,t)\,d\xi.
\]
Hence, denoting by $\widecheck{\Phi}(x,t) = \Phi(-x,t)$, and using the standard formulas
\[
\mathscr F(\Phi) = \mathscr F^{-1}(\widecheck \Phi),\ \ \ \ \ \ \mathscr F^{-1}(\Phi) = \mathscr F(\widecheck \Phi), 
\]
we infer
\begin{align*}
\langle e^{-i t p(\xi,\tau)},\mathscr F^{-1}(\Phi)\rangle & = \int_{\Rd} e^{-4\pi^2 i t |\xi|^2}
\mathscr F_{x\to\xi}(\widecheck \Phi(\cdot,t))(\xi)\,d\xi
 = \int_{\Rd} 
\mathscr F_{x\to\xi}(S(t)\widecheck \Phi(\cdot,t))(\xi)\,d\xi
\\
& = \sa 1,\mathscr F_{x\to\xi}(S(t)\widecheck \Phi(\cdot,t))\da = \sa \delta, (S(t)\widecheck \Phi(\cdot,t))^{\widecheck{}}\ \da
\\
& = (4\pi i t)^{-\frac d2} \int_{\Rd} e^{i\frac{|x|^2}{4t}} \Phi(x,t) dx,
\end{align*}
which proves the claim \eqref{beauty}. Substituting \eqref{beauty} in \eqref{reg}, we find
\begin{align}\label{reg2}
\langle \mathscr F^{-1}(m_{s,R}),\Phi\rangle
& =
\frac{e^{-i\frac{\pi s}2}}{4^s\Gamma(s)} z^{2s}
\int_\R \int_{\Rd}
\mathbf 1_{(0,\infty)}(t)\frac{\chi_R(t)}{
t^{1+s}}
e^{i \frac{z^2}{4t}}
(4\pi i t)^{-\frac d2}  e^{i\frac{|x|^2}{4t}} \Phi(x,t) dx
\,dt
\notag\\
& = \sa \mathbf 1_{(0,\infty)}\frac{\chi_R}{
t^{1+s}}
e^{i \frac{z^2}{4t}} S(x,t),\Phi \da.
\notag\end{align}
Since we have in $\mathscr S'(\RNu)$:
\[
\mathscr F^{-1}(m_{s,R})\ \underset{R\to \infty}{\longrightarrow}\ \mathscr F^{-1}(m_{s}),\ \ \ \text{and}\ \ \ \mathbf 1_{(0,\infty)}\frac{\chi_R}{
t^{1+s}}
e^{i \frac{z^2}{4t}} S(x,t)\ \underset{R\to \infty}{\longrightarrow}\ \frac{\mathbf 1_{(0,\infty)}}{
t^{1+s}}
e^{i \frac{z^2}{4t}} S(x,t),
\]
we conclude that, with $a=1-2s$, the equations \eqref{invFTFs} and \eqref{Poi} hold. This completes the proof.

\end{proof}

Formula \eqref{invFTFs} identifies the oscillatory Poisson kernel associated with the extension problem. Consequently, the solution of Theorem \ref{T:itstheFT} admits an explicit representation in the physical variables.
Henceforth, given points in the bulk space $X = (x,z), Y = (y,\zeta)\in \RN$, we denote by $X_0 = (x,0), Y_0 = (y,0)$ their projections on the boundary manifold $\p \RN$. We are ready to introduce the main definition of this section. 

\begin{definition}\label{D:Pa}
We define the \emph{Poisson kernel} associated with the extension problem  \eqref{expbS} as the function
\begin{equation}\label{Pa}
\mathbb P_a(X,Y_0,t) := P_a(z,t) S(x,y,t)
= \frac{e^{i\frac{(a-1)\pi}{4}}}
     {2^{1-a}\Gamma\!\left(\frac{1-a}{2}\right)}
\frac{z^{1-a}}{t^{\frac{3-a}{2}}}
e^{i\frac{z^2}{4t}}\ \mathbf 1_{(0,\infty)}(t)\ S(x,y,t).
\end{equation}
\end{definition}

Combining Theorems \ref{T:itstheFT} and \ref{T:invFT}, we obtain the representation formula
\begin{equation}\label{U2}
U(x,z,t) = \int_0^\infty \int_{\Rd} \mathbb P_a(X,Y_0,\sigma) u(y,t-\sigma) dy d\sigma.
\end{equation}

\begin{lemma}\label{L:Paone}
Assume that $-1<a<1$. Then for every $z>0$ the function in \eqref{Poi} satisfies the following identity as an improper integral:
\[
\int_0^\infty P_a(z,t) dt
=
1.
\]
\end{lemma}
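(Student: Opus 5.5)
Write $s=\frac{1-a}{2}\in(0,1)$, so that $z^{1-a}=z^{2s}$, $t^{\frac{3-a}{2}}=t^{1+s}$, and the prefactor of $P_a$ is $e^{-i\frac{\pi s}{2}}/(4^s\G(s))$. The claim then becomes the evaluation of the improper integral
\[
\int_0^\infty P_a(z,t)\,dt=\frac{e^{-i\frac{\pi s}{2}}z^{2s}}{4^s\G(s)}\lim_{\varepsilon\to0^+,\ R\to\infty}\int_\varepsilon^R\frac{e^{i\frac{z^2}{4t}}}{t^{1+s}}\,dt .
\]
I will first substitute $x=\frac{z^2}{4t}$, exactly as in the proof of Lemma \ref{L:critical}. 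This gives $dt/t^{1+s}=-(4/z^2)^{s}x^{s-1}dx$, so the integral becomes
\[
\frac{e^{-i\frac{\pi s}{2}}}{\G(s)}\int_0^\infty x^{s-1}e^{ix}\,dx,
\]
understood as an improper integral at both ends.

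The key identity is the Fresnel–Euler integral $\int_0^\infty x^{s-1}e^{ix}\,dx=\G(s)e^{i\frac{\pi s}{2}}$ for $0<s<1$. Once it is established, the prefactor cancels and the total is exactly $1$.

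To prove this identity I will treat the two endpoints separately.

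\textbf{Near $0$.} The integral converges absolutely because $s>0$.

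\textbf{Near $\infty$.} The integral converges conditionally. Integrating by parts gives
\[
\int_1^R x^{s-1}e^{ix}\,dx=\bigl[-ix^{s-1}e^{ix}\bigr]_1^R-i(1-s)\int_1^R x^{s-2}e^{ix}\,dx,
\]
and the last integral converges absolutely since $s<1$. This justifies the improper sense in the statement.

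\textbf{Evaluation.} I will compute $\int_0^\infty x^{s-1}e^{-(\varepsilon-i)x}\,dx=\G(s)(\varepsilon-i)^{-s}$ for $\varepsilon>0$, using the principal branch. Letting $\varepsilon\to0^+$ gives $(-i)^{-s}=e^{i\frac{\pi s}{2}}$. Alternatively, one can take $\beta\to0$ in Schläfli's formula \eqref{reg} together with the small-argument asymptotics \eqref{zeroKnu}. The interchange of the limit $\varepsilon\to0^+$ with the improper integral is an Abel-summation argument. I would carry it out with the same integration by parts: write $e^{-\varepsilon x}e^{ix}$ and integrate $e^{ix}$. The resulting integrand is dominated by $C x^{s-2}$ on $[1,\infty)$, uniformly in $\varepsilon$, so dominated convergence applies.

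\textbf{Main obstacle.} The only genuinely delicate step is this justification of exchanging $\varepsilon\to0$ with the conditionally convergent integral at infinity, equivalently at $t\to0$ in the original variable. The bound obtained from integration by parts handles it. A contour rotation to the positive imaginary axis, with Jordan-type estimates on the arc, is an alternative route.
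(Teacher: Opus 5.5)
Your proof is correct and follows the paper's route: the substitution $x=z^2/(4t)$ reduces the claim to $\int_0^\infty x^{s-1}e^{ix}\,dx=\Gamma(s)e^{i\frac{\pi s}{2}}$. The only difference is that you derive this identity by Abel regularization, while the paper simply cites it as Lemma \ref{L:cis} (from Gradshteyn--Ryzhik); in your argument the uniform bound $Cx^{s-2}$ on the integrated-by-parts integrand holds because $\varepsilon x e^{-\varepsilon x}\le e^{-1}$.
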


\begin{proof}
Set $s=\frac{1-a}{2}$,
$b=\frac{z^2}{4}$.
Then $\frac{3-a}{2}=s+1$,
and in view of \eqref{Poi} the integral can be written as
\begin{equation}\label{Paint}
\int_0^\infty P_a(z,t) dt =
\frac{e^{-i\frac{\pi s}{2}}}
     {2^{2s}\Gamma(s)}
z^{2s}
\int_0^\infty t^{-s-1}e^{i b/t}\,dt.
\end{equation}
By the change of variable
$u=\frac{b}{t}$ we find
\begin{align*}
\int_0^\infty t^{-s-1}e^{ib/t}\,dt
&=
\frac{2^{2s}}{z^{2s}}
\int_0^\infty u^{s-1}e^{iu}\,du.
\end{align*}
Since $0<\Re(s)<1$, formula \eqref{ei} in Lemma \ref{L:cis} yields
\[
\int_0^\infty u^{s-1}e^{iu}\,du
=
e^{i\frac{\pi s}2}\Gamma(s).
\]
Substituting these identities in \eqref{Paint} we reach the desired conclusion.

\end{proof}

From Lemma \ref{L:Paone} and tensorization we obtain.
 
\begin{corollary}\label{C:Paone}
We have the following oscillatory integral
\begin{equation}\label{intPa}
\int_{\RNu} \mathbb P_a(X,Y_0,t) dy dt = 1,\ \ \ \ \ \ \ \ \ \forall X\in \RN.
\end{equation} 
\end{corollary}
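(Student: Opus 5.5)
The plan is to use the tensor structure $\mathbb P_a(X,Y_0,t)=P_a(z,t)\,S(x,y,t)$ and integrate in two stages: first in $y$ at fixed $t>0$, then in $t$, where Lemma \ref{L:Paone} applies directly. Since $\mathbb P_a$ vanishes for $t\le 0$ because of the factor $\mathbf 1_{(0,\infty)}(t)$, the integral over $\RNu$ reduces to
\[
\int_0^\infty P_a(z,t)\left(\int_{\Rd} S(x,y,t)\,dy\right)dt .
\]
Neither integral is absolutely convergent: $|S(x,y,t)|=(4\pi t)^{-d/2}$ is not integrable in $y$, and $|P_a(z,t)|\sim t^{-1-s}$ has a non-integrable oscillating factor near $t=0$. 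So the identity \eqref{intPa} has to be read as an iterated oscillatory integral, with the inner integral taken in the Fresnel sense and the outer one as the improper integral of Lemma \ref{L:Paone}.

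For the inner integral, fix $t>0$ and $x\in\Rd$. I would show that
\[
\int_{\Rd}(4\pi i t)^{-\frac d2}e^{i\frac{|x-y|^2}{4t}}\,dy=1
\]
by inserting a Gaussian damping factor $e^{-\varepsilon|y-x|^2}$ and letting $\varepsilon\to0^+$. The damped integral is the Gaussian integral
\[
\int_{\Rd}e^{-(\varepsilon-\frac{i}{4t})|w|^2}\,dw=\pi^{d/2}\Bigl(\varepsilon-\tfrac{i}{4t}\Bigr)^{-d/2},
\]
taken with the principal branch. As $\varepsilon\to0^+$ it tends to $\pi^{d/2}(4t)^{d/2}e^{i\pi d/4}$. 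This equals $(4\pi i t)^{d/2}$, because $i^{d/2}=e^{i\pi d/4}$ in the same principal branch that defines $S$ in \eqref{Scla}. An equivalent route is to note that $\int S(x,y,t)\,dy$ is $S(t)$ applied to the constant function $1$, and that $\widehat{S(t)\varphi}(\xi)=e^{-4\pi^2 it|\xi|^2}\hat\varphi(\xi)$ preserves the value at $\xi=0$; this is the same computation as in the proof of \eqref{beauty}.

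With the inner integral equal to $1$ for every $t>0$, the outer integral becomes $\int_0^\infty P_a(z,t)\,dt$, which equals $1$ by Lemma \ref{L:Paone} for every $z>0$. This holds uniformly in $x$, which gives \eqref{intPa} for all $X=(x,z)\in\RN$.

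The main point requiring care is justifying the iterated order and the meaning of the oscillatory integral. The cleanest way is to define \eqref{intPa} as the limit of $\int\!\!\int\chi_R(t)\,e^{-\varepsilon|y-x|^2}\,\mathbb P_a\,dy\,dt$, with $\chi_R$ the cutoff from the proof of Theorem \ref{T:invFT}. For fixed $R$ and $\varepsilon$ this integral is absolutely convergent, so Fubini applies. Taking $\varepsilon\to0$ first, by dominated convergence in $t$ on the compact support of $\chi_R$ (the inner damped integrals are bounded uniformly for $t\in\operatorname{supp}\chi_R$), leaves $\int_0^\infty\chi_R P_a\,dt$. The remaining step is $\int_0^\infty\chi_R P_a\,dt\to1$ as $R\to\infty$. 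This requires checking that the smooth cutoff reproduces the improper limit of Lemma \ref{L:Paone}. The tail $t\ge R$ contributes $O(R^{-s})$ by absolute integrability there. After the substitution $u=z^2/(4t)$, the region near $t=0$ becomes a tail of $\int u^{s-1}e^{iu}\,du$ with a smooth cutoff, and one integration by parts shows that this tail vanishes.
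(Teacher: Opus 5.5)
Your proposal is correct and follows the same route as the paper. The paper obtains the corollary in one line from the tensorization $\mathbb P_a=P_a\,S$ together with Lemma \ref{L:Paone}, tacitly using that the Fresnel integral of $S(x,\cdot,t)$ equals $1$. Your Gaussian damping in $y$, combined with the cutoff $\chi_R$ in $t$ and the order of limits $\varepsilon\to0$ then $R\to\infty$, supplies a precise meaning for the oscillatory integral that the paper leaves implicit, and the estimates you give for that regularization are correct.
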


We close this section with the following remarkable consequence of Lemma \ref{L:Abel}. We will need it in Section \ref{P:liftingEs}.

\begin{lemma}\label{L:abepoi}
For $0<s<1, a = 1-2s, t>0$, and $z>0$ we have:
\begin{equation}\label{abepoi}
\frac{1}{\G(s)} \int_0^\infty \frac{\mathbf 1_{(0,\infty)}(t-\sigma)}{(t-\sigma)^{1-s}} P_a(z,\sigma) d\sigma = \frac{1}{\G(s)} \frac{e^{i \frac{z^2}{4t}}}{t^{1-s}}.
\end{equation}
\end{lemma}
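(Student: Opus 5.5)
The identity is an exact computation, which I would carry out by a change of variables that turns the convolution into the Fresnel-type integral of Lemma \ref{L:cis}. Set $b=\frac{z^2}{4}$. Since $a=1-2s$ gives $\frac{3-a}{2}=1+s$, formula \eqref{Poi} reads
\[
P_a(z,\sigma)=c_s\,z^{2s}\,\sigma^{-1-s}e^{ib/\sigma}\mathbf 1_{(0,\infty)}(\sigma),\qquad c_s=\frac{e^{-i\frac{\pi s}{2}}}{4^s\G(s)} .
\]
The indicator $\mathbf 1_{(0,\infty)}(t-\sigma)$ restricts the integration to $0<\sigma<t$. After cancelling the common factor $\frac1{\G(s)}$, the claim \eqref{abepoi} is therefore equivalent to
\[
c_s z^{2s}\int_0^t (t-\sigma)^{s-1}\sigma^{-1-s}e^{ib/\sigma}\,d\sigma=t^{s-1}e^{ib/t}.
\]

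First, I would invert time by putting $\sigma=1/v$, with $v$ running over $(1/t,\infty)$ and $d\sigma=dv/v^2$. Since $t-\sigma=(tv-1)/v$, the powers of $v$ cancel exactly:
\[
(t-\sigma)^{s-1}\sigma^{-1-s}\,d\sigma=(tv-1)^{s-1}v^{1-s}\,v^{1+s}\,v^{-2}\,dv=(tv-1)^{s-1}\,dv .
\]
The integral thus becomes $\int_{1/t}^\infty (tv-1)^{s-1}e^{ibv}\,dv$. Next I would shift $w=v-\frac1t$, which pulls out the factor $t^{s-1}e^{ib/t}$ and leaves $\int_0^\infty w^{s-1}e^{ibw}\,dw$. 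Rescaling $w\mapsto w/b$ and applying formula \eqref{ei} of Lemma \ref{L:cis}, exactly as in the proof of Lemma \ref{L:Paone}, this equals $b^{-s}e^{i\frac{\pi s}{2}}\G(s)=4^sz^{-2s}e^{i\frac{\pi s}{2}}\G(s)$. Multiplying by $c_sz^{2s}$ gives precisely $1$, and the identity follows.

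The only genuine issue is convergence. Near $\sigma=t$ the integrand is integrable, since $s>0$. Near $\sigma=0$, however, $\sigma^{-1-s}$ is not integrable, and the integral exists only as an oscillatory improper integral; after the substitution this corresponds to $w\to\infty$ in $\int^\infty w^{s-1}e^{ibw}\,dw$, which converges conditionally because $s<1$. I would therefore interpret the left side of \eqref{abepoi} as $\lim_{\epsilon\to0^+}\int_\epsilon^t$, in the same sense as Lemma \ref{L:Paone}. The changes of variables are performed on the truncated integrals, where they are legitimate, and one then passes to the limit. This is justified because $\int_0^M w^{s-1}e^{ibw}\,dw$ converges as $M\to\infty$, for instance by one integration by parts, or equivalently by an Abel factor $e^{-\varepsilon w}$ as in Lemma \ref{L:Abel}.
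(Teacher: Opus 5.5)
Your proof is correct and follows essentially the paper's route. The paper reduces \eqref{abepoi} to Lemma \ref{L:Abel}, whose proof uses the substitutions $\sigma=tu$, $\tau=1/u$, $y=\tau-1$, which are your $v=1/\sigma$, $w=v-1/t$ up to the scaling $y=tw$. It then applies \eqref{ei}, with the integral read as $\lim_{\varepsilon\to0^+}\int_\varepsilon^t$ exactly as you propose.
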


\begin{proof} Keeping \eqref{invFTFs} and $a = 1-2s$ in mind, we have
\begin{align*}
& \frac{1}{\G(s)} \int_0^\infty \frac{\mathbf 1_{(0,\infty)}(t-\sigma)}{(t-\sigma)^{1-s}} P_a(z,\sigma) d\sigma = \frac{e^{-i\frac{\pi s}{2}}}{4^s\G(s)^2}
z^{2s} \int_0^t 
\frac{e^{i\frac{z^2}{4\sigma}}}{(t-\sigma)^{1-s} \sigma^{1+s}} d\sigma
\\
& = \frac{e^{-i\frac{\pi s}{2}}}{4^s\G(s)^2}
z^{2s}  4^s\,\G(s)\,e^{i\frac{\pi s}{2}}\,t^{s-1}\,z^{-2s}
e^{i \frac{z^2}{4t}} = \frac{1}{\G(s)} \frac{e^{i \frac{z^2}{4t}}}{t^{1-s}},
\end{align*}
which proves the lemma.

\end{proof}


\vskip 0.2in

\subsection{Poisson kernel and Bessel intertwining}\label{S:ep}

For the sake of completeness, in this short subsection we present, in the spirit of the work of Caffarelli and Silvestre \cite[Sections 2.2 \& 2.3]{CS}, a second approach to the extension problem based on the intertwining properties of the Bessel operators. Since it presumes the knowledge of the propagator \eqref{grandeSaS0} from \cite{GS1, GS2},  
this approach is not as constructive as the one developed in Section \ref{S:poiFT}, and it also implicitly hinges on the intertwining properties of the Bessel operators on the half-line first identified in the works of Weinstein \cite{We48, We2} and Muckenhoupt and Stein \cite{MS}. In our opinion, one remarkable aspect of this indirect approach is that the Poisson kernel - an object naturally associated with the Dirichlet problem - is almost axiomatically derived once one knows the fundamental solution of the Cauchy problem in the half-space with vanishing weighted Neumann condition:
\begin{equation}\label{eppn}
\begin{cases}
\p_t U - i \left( \Delta_x U+\mathscr B_z^{(a)} U\right) = 0,\ \ \ (X,t)\in \R_+^{d+1}\times \R
\\
\underset{z\to 0^+}{\lim} z^a \p_z U(X,t) = 0,\ \  \ \ U(X,0) = U_0(X).
\end{cases}
\end{equation}
Now, the fundamental solution of  \eqref{eppn} was constructed in \cite{GS2} and it is given by the kernel:
\begin{align}\label{grandeSaS0}
\mathbb S_a(X,Y,t)
= \frac{e^{-i\,\operatorname{sgn}(t)\,\frac{(d+a+1)\pi}{4}}}{2^{\frac{2d+a+1}{2}}\pi^{\frac d2}}  
\frac{e^{i\,\frac{z^2+\zeta^2+|x-y|^2}{4t}}}{|t|^{\frac{d+a+1}{2}}}\left(\frac{z\zeta}{2|t|}\right)^{\frac{1-a}{2}}
J_{\frac{a-1}{2}}\!\left(\frac{z\zeta}{2|t|}\right),
\qquad t\neq 0.
\end{align}
We note explicitly the tensorization property
\begin{equation}\label{Sa}
\mathbb S_a(X,Y,t) =  S_a(z,\zeta,t) S(x,y,t),
\end{equation}
where $S(x,y,t)$ is the classical propagator \eqref{Scla}, whereas we have denoted by 
\begin{equation}\label{SaS}
S_a(z,\zeta,t)
=
\frac{e^{-i\,\operatorname{sgn}(t)\,\frac{(a+1)\pi}{4}}}{(2|t|)^{\frac{a+1}{2}}}
\left(\frac{z\zeta}{2|t|}\right)^{\frac{1-a}{2}}
J_{\frac{a-1}{2}}\!\left(\frac{z\zeta}{2|t|}\right)
\,e^{i \frac{z^2+\zeta^2}{4t}}, \quad t \neq 0,
\end{equation}
the fundamental solution, constructed in \cite{GS1},  of the unitary group $e^{i t \Ba}$ on the measure space $(\R^+_z, z^a dz)$, with zero Neumann condition.
Note that \eqref{Sa} implies the Neumann condition
\begin{equation}\label{Sazeron}
\underset{z\to 0^+}{\lim} z^a \p_z \mathbb S_a(X,Y,t) = 0.
\end{equation}

We will need the following elementary, yet important fact.
\begin{lemma}\label{L:S-a}
One has for $t\not= 0$
\[
S_{-a}(z,0,t) 
=
\frac{2^a\,|t|^{\frac{a-1}{2}}}
     {\Gamma\!\left(\frac{1-a}{2}\right)}
\,e^{-i\,\operatorname{sgn}(t)\,\frac{(1-a)\pi}{4}}
\,e^{i\frac{z^2}{4t}}.
\]
\end{lemma}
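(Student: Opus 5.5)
The plan is to read off $S_{-a}(z,0,t)$ directly from the explicit formula \eqref{SaS}, with $a$ replaced by $-a$. Since $-1<a<1$ forces $-1<-a<1$, that formula is available for the index $-a$. Writing $w=\frac{z\zeta}{2|t|}$, it gives
\[
S_{-a}(z,\zeta,t)=\frac{e^{-i\operatorname{sgn}(t)\frac{(1-a)\pi}{4}}}{(2|t|)^{\frac{1-a}{2}}}\, w^{\frac{1+a}{2}}J_{-\frac{1+a}{2}}(w)\, e^{i\frac{z^2+\zeta^2}{4t}} .
\]
The value at $\zeta=0$ is understood as the limit $\zeta\to0^+$. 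Equivalently, it is the value at $w=0$ of the entire function $w\mapsto w^{\nu}J_{-\nu}(w)$ with $\nu=\frac{1+a}{2}\in(0,1)$.

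The key step is the series expansion of $J_{-\nu}$:
\[
J_{-\nu}(w)=\sum_{k\ge0}\frac{(-1)^k}{k!\,\Gamma(k-\nu+1)}\Bigl(\frac w2\Bigr)^{2k-\nu}.
\]
Multiplying by $w^\nu$ gives an even entire function of $w$. Its value at $w=0$ is $2^{\nu}/\Gamma(1-\nu)$, and this value is finite and nonzero because $1-\nu=\frac{1-a}{2}\in(0,1)$. Substituting $\nu=\frac{1+a}{2}$, the limit becomes $2^{\frac{1+a}{2}}/\Gamma\!\left(\frac{1-a}{2}\right)$. In the same limit the exponential factor reduces to $e^{i\frac{z^2}{4t}}$.

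It remains to collect the constants. The powers of $2$ combine as $2^{-\frac{1-a}{2}}\,2^{\frac{1+a}{2}}=2^{a}$. The time factor is $|t|^{-\frac{1-a}{2}}=|t|^{\frac{a-1}{2}}$. The phase $e^{-i\operatorname{sgn}(t)\frac{(1-a)\pi}{4}}$ passes through unchanged. Together these give exactly the stated formula.

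There is no real obstacle here. The one point requiring care is the sign convention: $J$ carries the \emph{negative} order $\frac{-a-1}{2}$, so the small-argument limit involves $\Gamma(1-\nu)$ and not $\Gamma(1+\nu)$. This is precisely what produces $\Gamma\!\left(\frac{1-a}{2}\right)$ in the denominator.
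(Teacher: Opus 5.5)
Your proposal is correct and follows the paper's proof: substitute $-a$ for $a$ in \eqref{SaS}, evaluate the $\zeta\to0^+$ limit of $w^{\frac{1+a}{2}}J_{-\frac{1+a}{2}}(w)$ as $2^{\frac{1+a}{2}}/\Gamma\!\left(\frac{1-a}{2}\right)$, and collect the constants. The only difference is cosmetic: you use the full series, which makes $w^{\nu}J_{-\nu}(w)$ visibly an even entire function, whereas the paper uses the leading-order asymptotic \eqref{Js}.
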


\begin{proof}
From \eqref{SaS0} we have
\[
S_{-a}(z,\zeta,t)
=
\frac{e^{-i\,\operatorname{sgn}(t)\,\frac{(1-a)\pi}{4}}}
     {(2|t|)^{\frac{1-a}{2}}}
\left(\frac{z\zeta}{2|t|}\right)^{\frac{a+1}{2}}
J_{-\frac{a+1}{2}}\!\left(\frac{z\zeta}{2|t|}\right)
e^{i\frac{z^{2}+\zeta^{2}}{4t}},
\qquad t\neq 0.
\]
The asymptotic of the Bessel function,
\[
J_\nu(x)
=
\frac{(x/2)^\nu}{\Gamma(\nu+1)}
+o\!\left(x^\nu\right),
\qquad x\to0,
\]
with \(\nu=-\frac{a+1}{2}\), immediately gives
\[
\lim_{\zeta\to0^+}
\left(\frac{z\zeta}{2|t|}\right)^{\frac{a+1}{2}}
J_{-\frac{a+1}{2}}
\!\left(\frac{z\zeta}{2|t|}\right)
=
\frac{2^{\frac{a+1}{2}}}
     {\Gamma\!\left(\frac{1-a}{2}\right)},
\]
and therefore the desired conclusion follows.

\end{proof}

\begin{lemma}\label{L:Smenoa}
Let \(a>-1\) and \(t\neq 0\). Then we define the boundary kernel 
\[
P_{a}(z,t) := -i\,z^{-a}\,\partial_z S_{-a}(z,0,t)
=
\frac{2^{a-1}\operatorname{sgn}(t)}{\Gamma\!\left(\frac{1-a}{2}\right)}
\,z^{1-a}\,|t|^{\frac{a-3}{2}}
\,e^{-i\,\operatorname{sgn}(t)\,\frac{(1-a)\pi}{4}}
\,e^{i\frac{z^2}{4t}}.
\]
\end{lemma}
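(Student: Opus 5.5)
The plan is a direct computation: apply $\partial_z$ to the closed form of $S_{-a}(z,0,t)$ furnished by Lemma \ref{L:S-a}, then multiply by $-i z^{-a}$. Lemma \ref{L:S-a} gives, for $t\neq0$,
\[
S_{-a}(z,0,t)=\frac{2^a|t|^{\frac{a-1}{2}}}{\Gamma\!\left(\frac{1-a}{2}\right)}e^{-i\operatorname{sgn}(t)\frac{(1-a)\pi}{4}}e^{i\frac{z^2}{4t}}.
\]
In this expression only the last factor depends on $z$, and its derivative is $\partial_z e^{i\frac{z^2}{4t}}=\frac{iz}{2t}e^{i\frac{z^2}{4t}}$.

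Hence
\[
-i z^{-a}\partial_z S_{-a}(z,0,t)=-i\cdot\frac{i}{2t}\,z^{1-a}\,\frac{2^a|t|^{\frac{a-1}{2}}}{\Gamma\!\left(\frac{1-a}{2}\right)}e^{-i\operatorname{sgn}(t)\frac{(1-a)\pi}{4}}e^{i\frac{z^2}{4t}}.
\]
Since $-i\cdot i=1$, the prefactor reduces to $\frac{1}{2t}=\frac{\operatorname{sgn}(t)}{2|t|}$. Combining powers, $2^a/2=2^{a-1}$ and $|t|^{\frac{a-1}{2}}|t|^{-1}=|t|^{\frac{a-3}{2}}$, which is exactly the stated formula.

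One point needs care: Lemma \ref{L:S-a} was obtained as the limit $\zeta\to0^+$, and we are differentiating that limit in $z$. I would justify this by observing that $w\mapsto w^{\nu}J_{-\nu}(w)$, with $\nu=\frac{a+1}{2}$, is an entire function of $w^2$. Consequently $S_{-a}(z,\zeta,t)$ is smooth in $(z,\zeta)$ up to $\zeta=0$, and the limit commutes with $\partial_z$. As a further consistency check, for $t>0$ the result matches \eqref{Poi}: there $2^{a-1}=1/2^{1-a}$ and $e^{-i\frac{(1-a)\pi}{4}}=e^{i\frac{(a-1)\pi}{4}}$. The only step that requires any thought is this interchange of limit and derivative; everything else is routine bookkeeping of signs and powers.
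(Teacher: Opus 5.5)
Your proposal is correct and is the computation the paper intends: the paper simply states that the formula is an elementary consequence of Lemma~\ref{L:S-a} and omits the details, which are your differentiation of $e^{i\frac{z^2}{4t}}$ together with the bookkeeping $\frac{1}{2t}=\frac{\operatorname{sgn}(t)}{2|t|}$, $2^a/2=2^{a-1}$ and $|t|^{\frac{a-1}{2}-1}=|t|^{\frac{a-3}{2}}$. Your remark on interchanging $\zeta\to0^+$ with $\partial_z$ is correct, since $w^{\nu}J_{-\nu}(w)$ is entire in $w^2$, but it is not strictly needed: the definition differentiates the function $z\mapsto S_{-a}(z,0,t)$, which is already available in closed form.
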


\begin{proof}
The claimed expression of $-i\,z^{-a}\,\partial_z S_{-a}(z,0,t)$ is an elementary consequence of Lemma \ref{L:S-a}, and we thus omit the relevant details.

\end{proof}

Once Lemma \ref{L:Smenoa} is available, one obtains the Poisson kernel $\mathbb P_a(X,Y_0,t)$ as in \eqref{Pa} exploiting the same tensorization principle which underlies \eqref{Sa}. We remark that, even if Lemma \ref{L:Smenoa} seemingly introduces \emph{in one stroke} the function which in \eqref{invFTFs} of Theorem \ref{T:invFT} was derived by a delicate computation, the simplification is only apparent. The hidden difficulty lies in the computation of the propagator \eqref{SaS} which is found in our work \cite{GS1}, which is here taken for granted.

The following result reflects the intertwining properties of the Bessel operators.

\begin{lemma}\label{L:Pa-solves-equation}
For any fixed $Y_0\in \p \RN$, the smooth function $(X,t)\to \mathbb P_a(X,Y_0,t)$ solves the equation 
\[
\partial_t \mathbb P_a - i\bigl(\Delta_x \mathbb P_a+\mathscr B_z^{(a)}\mathbb P_a\bigr)=0
\]
in the bulk space $\RN\times (0,\infty)$.
\end{lemma}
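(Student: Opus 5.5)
Since $\mathbb P_a(X,Y_0,t)=P_a(z,t)\,S(x,y,t)$ is a product of a function of $(z,t)$ and a function of $(x,t)$, and each factor is smooth on $t>0$, $z>0$, the plan is to separate variables. By the Leibniz rule,
\[
\partial_t\mathbb P_a-i(\Delta_x+\Ba)\mathbb P_a
=\bigl(\partial_tP_a-i\Ba P_a\bigr)S+P_a\bigl(\partial_tS-i\Delta_xS\bigr).
\]
The second bracket vanishes for $t>0$, because $S(x,y,t)$ in \eqref{Scla} is the kernel of $e^{it\Delta_x}$. One can also verify this directly: $\partial_t\log S=-\frac{d}{2t}-i\frac{|x-y|^2}{4t^2}$, while $\Delta_xS=\bigl(\frac{id}{2t}-\frac{|x-y|^2}{4t^2}\bigr)S$. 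The proof therefore reduces to the one-dimensional identity $\partial_tP_a=i\Ba P_a$ for $z,t>0$.

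For this identity, drop the constant and take $P(z,t)=z^{2s}t^{-1-s}e^{iz^2/(4t)}$, with $a=1-2s$. Set $\phi=iz^2/(4t)$. Then
\[
\partial_tP=\Bigl(-\frac{1+s}{t}-\frac{iz^2}{4t^2}\Bigr)P .
\]
For the spatial derivatives, write $P=z^{2s}g$ with $g=t^{-1-s}e^{\phi}$. Then $\partial_z\log P=\frac{2s}{z}+\frac{iz}{2t}$, and
\[
\partial_{zz}P=\Bigl[\Bigl(\frac{2s}{z}+\frac{iz}{2t}\Bigr)^2-\frac{2s}{z^2}+\frac{i}{2t}\Bigr]P .
\]
Adding $\frac{1-2s}{z}\partial_zP$ gives
\[
\Ba P=\Bigl[\frac{4s^2-2s+2s(1-2s)}{z^2}+\frac{2is+i(1-2s)+i}{2t}-\frac{z^2}{4t^2}\Bigr]P
=\Bigl[\frac{i(1+\frac12)\cdot 0+i(1+s)\cdot 0}{1}+\frac{i}{t}\Bigl(\frac{2s}{2}+\frac{1-2s}{2}+\frac12\Bigr)-\frac{z^2}{4t^2}\Bigr]P .
\]
The $z^{-2}$ terms cancel, since $4s^2-2s+2s-4s^2=0$. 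The $t^{-1}$ coefficient should be $\frac{i}{2t}(4s+1-2s+1)=\frac{i(1+s)}{t}$; this bookkeeping is the only place an error could slip in. Hence
\[
i\Ba P=\Bigl(-\frac{1+s}{t}-\frac{iz^2}{4t^2}\Bigr)P=\partial_tP ,
\]
which is the required identity.

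Alternatively, and as a consistency check in the spirit of the subsection's intertwining approach, one could argue as follows. $S_{-a}(z,0,t)$ solves $\partial_t=i\mathscr B^{(-a)}_z$, and the conjugation $z^{-a}\partial_z\,\mathscr B^{(-a)}_z=\Ba\, z^{-a}\partial_z$ (the Weinstein/Muckenhoupt–Stein intertwining) shows that $P_a=-iz^{-a}\partial_zS_{-a}(z,0,t)$ from Lemma \ref{L:Smenoa} solves the $a$-equation. I expect no real obstacle beyond the sign and coefficient bookkeeping in the direct computation. The phase constant and the factor $\mathbf 1_{(0,\infty)}(t)$ are irrelevant in the open region $t>0$.
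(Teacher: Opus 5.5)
Your proof is correct and follows the paper's argument: the Leibniz split into $(\partial_tP_a-i\Ba P_a)S+P_a(\partial_tS-i\Delta_xS)$, the fact that $S$ solves the free Schr\"odinger equation, and the direct computation of $\partial_tP_a$ and $\Ba P_a$ giving $i\Ba P_a=\partial_tP_a$. The one slip is in your displayed formula for $\Ba P$, where the cross term of the square contributes $\frac{2is}{t}=\frac{4is}{2t}$, not $\frac{2is}{2t}$; as written, both expressions in that display evaluate to $\frac{i}{t}$, and the second one is garbled. The coefficient $\frac{i(1+s)}{t}$ you state right afterwards is correct, so replace that display with it.
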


\begin{proof}
From \eqref{Pa} we have
\[
\partial_t \mathbb P_a - i\bigl(\Delta_x \mathbb P_a+\mathscr B_z^{(a)}\mathbb P_a\bigr)
=
\bigl(\partial_t P_a-i\mathscr B_z^{(a)}P_a\bigr)S
+
P_a \bigl(\partial_t S-i\Delta_x S\bigr).
\]
Since
$\partial_t S-i\Delta_x S=0$,
 it remains to show that
\[
\partial_t P_a-i\mathscr B_z^{(a)}P_a=0.
\]
Set $C_a=\frac{e^{i\frac{(a-1)\pi}{4}}}{2^{1-a}\Gamma\!\left(\frac{1-a}{2}\right)}$, so that
$P_a(z,t)=C_a\,t^{-\frac{3-a}{2}}\,z^{1-a}e^{i\frac{z^2}{4t}}$.
A direct computation gives
\[
\partial_t P_a
=
P_a\left(-\frac{3-a}{2t}-i\frac{z^2}{4t^2}\right).
\]
Next,
\[
\mathscr B_z^{(a)}\!\left(z^{1-a}e^{i\frac{z^2}{4t}}\right)
=
\left(i\frac{3-a}{2t}-\frac{z^2}{4t^2}\right)
z^{1-a}e^{i\frac{z^2}{4t}},
\]
and it follows that
\[
\mathscr B_z^{(a)}P_a
=
P_a\left(i\frac{3-a}{2t}-\frac{z^2}{4t^2}\right).
\]
Hence
\[
i\,\mathscr B_z^{(a)}P_a
=
P_a\left(-\frac{3-a}{2t}-i\frac{z^2}{4t^2}\right)
=
\partial_t P_a.
\]
Therefore,
\[
\partial_t P_a-i\mathscr B_z^{(a)}P_a=0.
\]

\end{proof}



\section{Classical solvability for Schwartz data}\label{S:schwartz}

The explicit representation formula obtained in the previous section
suggests that the oscillatory Poisson operator should solve the extension
problem in complete analogy with the  extension theories for
fractional elliptic and parabolic operators. The purpose of this section
is to verify that this is indeed the case.

Assuming that the boundary datum belongs to the Schwartz class, we prove
that the oscillatory Poisson integral defines a smooth solution of the
singular Schr\"odinger equation in the upper half-space, satisfies the
prescribed boundary condition, and realizes the fractional Schr\"odinger
operator through the associated Dirichlet-to-Neumann map. The rapid decay
of Schwartz functions permits a completely classical treatment, avoiding
the functional-analytic issues that arise in the $L^2$ setting. Those
questions will be addressed in the next section, where the oscillatory
Poisson operator is extended to the natural graph space of the fractional
Schr\"odinger operator.

Throughout this section we assume that $u\in\mathscr S(\RNu)$,
and with $m_s$ as in \eqref{ms}, we denote by
\[
\widehat U(\xi,z,\tau)
=
m_s(z;\xi,\tau)\widehat u(\xi,\tau)
\]
the oscillatory extension introduced in \eqref{supernicehatV0} of Theorem \ref{T:itstheFT}.
The rapid decay of $\widehat u$ allows one to differentiate under the
inverse Fourier transform and thereby recover a smooth solution of the
singular Schr\"odinger equation.

\begin{theorem}[Classical solvability]
\label{T:classical}

Let $u\in\mathscr S(\RNu)$.
Then the oscillatory extension satisfies
\[
U\in C^\infty(\RNu\times(0,\infty)).
\]
Moreover:
\begin{itemize}

\item[(2)] $U$ satisfies the extension equation
\[
\partial_tU - i(\Delta_x U+\Ba U) = 0
\]
pointwise in $\RNu\times(0,\infty)$;
\item[(3)] the boundary conditions
\[
U(\cdot,z,\cdot)\ 
\underset{z\to 0^+}{\longrightarrow}\ 
u,
\]
and
\[
-
\frac{\Gamma(s)}{2^{1-2s}\Gamma(1-s)}
e^{i\pi s/2}
 z^a\partial_zU\ 
\underset{z\to 0^+}{\longrightarrow}\ 
\Le^s u
\]
hold strongly in
$L^2(\mathbb R^{d+1})$.
\end{itemize}
\end{theorem}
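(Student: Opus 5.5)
We work entirely in phase space with $U(\cdot,z,\cdot)=\mathscr F^{-1}\bigl(m_s(z;\cdot)\widehat u\bigr)$. The key input is a uniform bound on the multiplier along the two rays $w=z|p|^{1/2}$ and $w=-iz|p|^{1/2}$. On the positive ray, $M_s(w)=w^sK_s(w)$ is bounded and decays exponentially, so all $z$-derivatives of $m_s$ are bounded by polynomials in $|p|^{1/2}$, locally uniformly in $z>0$. On the ray $\arg w=-\frac\pi2$ we have the Hankel representation $K_s(-ir)=\frac{\pi i}{2}e^{i\pi s/2}H^{(1)}_s(r)$. Combining the small-$r$ asymptotics \eqref{zeroKnu} with the large-$r$ asymptotics $|H^{(1)}_s(r)|\lesssim r^{-1/2}$ gives $|M_s(-ir)|\lesssim 1+r^{s-1/2}$, and likewise for $M_{1-s}$. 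Each $z$-derivative produces a factor $\theta|p|^{1/2}$ times a Bessel function of the same type, again with at most polynomial growth.

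\textbf{Smoothness and the equation.} For $z$ in a compact subset of $(0,\infty)$, the function $\partial_z^k\partial_{x,t}^\alpha$ applied to $e^{2\pi i(\sa x,\xi\da+t\tau)}m_s\widehat u$ is dominated by $(1+|\xi|+|\tau|)^N|\widehat u|$, which is integrable because $\widehat u\in\mathscr S$. We may therefore differentiate under the integral, and $U\in C^\infty$. The ODE \eqref{Bah}, namely $\Ba m_s=p\,m_s$, holds pointwise on $L^+\cup L^-$. On $L^+$ this is the modified Bessel equation. On $L^-$ it follows from the same equation for $w\mapsto w^sK_s(w)$ at complex argument, since $(-i)^2|p|=p$. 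Hence the Fourier integrand of $\partial_tU-i(\Delta_xU+\Ba U)$ is $\bigl(2\pi i\tau+4\pi^2 i|\xi|^2-ip\bigr)m_s\widehat u=0$. The paraboloid $P$ has measure zero, so it plays no role.

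\textbf{Boundary limits in $L^2$.} By Plancherel, $\|U(\cdot,z,\cdot)-u\|_{L^2}=\|(m_s(z;\cdot)-1)\widehat u\|_{L^2}$. Pointwise on $L^+\cup L^-$ we have $m_s(z;\xi,\tau)\to1$ by Theorem \ref{T:itstheFT}, \eqref{pointwise}. For domination, the bound above gives $|m_s|\lesssim 1+(z|p|^{1/2})^{s-1/2}$. For $z\le1$ this is at most $C(1+|p|)^{1/2}$, which is harmless because $(1+|p|)\widehat u\in L^2$. Dominated convergence then gives the first limit.

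For the Neumann limit we use the computation \eqref{DtoN} and its analogue on $L^-$:
\[
z^a\partial_z\widehat U=-\frac{2^{1-s}}{\G(s)}\theta^{2s}|p|^sM_{1-s}\bigl(z\theta|p|^{1/2}\bigr)\widehat u .
\]
This converges pointwise to the correct limit, by \eqref{onL+0}. It is dominated by $C|p|^s\bigl(1+|p|^{1/2}\bigr)^{1/2}|\widehat u|\in L^2$. Plancherel and dominated convergence then yield strong $L^2$ convergence to $\Le^su$, using Proposition \ref{P:FTfp}.

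\textbf{Main obstacle.} The main obstacle is the uniform bound on the characteristic ray $\arg w=-\frac\pi2$. There $K_s$ does not decay; it oscillates like $r^{-1/2}e^{ir}$, and near $r=0$ we must use the small-argument expansion. I would establish $\sup_{r>0}(1+r)^{-|\nu-1/2|}|M_\nu(-ir)|<\infty$ for $\nu\in\{s,1-s\}$ via the Hankel relation, together with the analogous polynomial bounds for derivatives. Once this is in place, Schwartz decay of $\widehat u$ absorbs every polynomial loss, and the rest of the argument is routine.
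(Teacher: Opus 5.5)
Your proposal is correct and follows the paper's proof step for step. The common structure is: polynomial bounds for the $z$-derivatives of $m_s$ along the rays $\arg w\in\{0,-\frac{\pi}{2}\}$ (you obtain them from $K_s(-ir)=\frac{\pi i}{2}e^{i\pi s/2}H^{(1)}_s(r)$, the paper from \eqref{abK}); differentiation under the inverse Fourier integral; the ODE $\Ba m_s=p\,m_s$ on $L^+\cup L^-$; and dominated convergence with polynomial majorants for both boundary limits.

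Two things to tidy. First, the exponent in your multiplier bound should read $(\nu-\frac12)_+$: as written, $1+(z|p|^{1/2})^{s-1/2}$ blows up as $z\to0^+$ when $s<\frac12$, so it does not give the majorant you need. Second, by differentiating only in $z$ and $(x,t)$ you usefully avoid the paper's claim about $\partial^\alpha_{\xi,\tau}m_s$. That claim is not needed, and it actually fails near the paraboloid $P$, where $\partial_\tau m_s$ behaves like $z^{2s}|p|^{s-1}$.
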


\begin{proof}

Since
$\widehat u\in\mathscr S(\RNu)$,
it is sufficient to show that every mixed derivative of the multiplier
$m_s(z;\xi,\tau)$ has at most polynomial growth in
$(\xi,\tau)$ on compact $z$-intervals.
Write
\[
w = z\theta(\xi,\tau)|p(\xi,\tau)|^{1/2}.
\]
Since
$\arg w\in\{0,-\pi/2\}$,
the modified Bessel function $K_s$
and all of its derivatives satisfy the standard asymptotic estimates
\[
K_\nu(w) = O(w^{-\nu}),
\qquad
w\rightarrow0,
\]
and
\[
K_\nu(w)
=
O(|w|^{-1/2}e^{-w}),
\qquad
|w|\rightarrow\infty,
\]
along the admissible rays.

Repeated application of the recurrence identity
\[
\frac{d}{dw}
\left(
w^\nu K_\nu(w)
\right) = - w^\nu K_{\nu-1}(w)
\]
shows that every derivative
$\partial_z^\beta\partial_{\xi,\tau}^\alpha m_s$
is a finite linear combination of expressions of the form
\[
P_{\alpha,\beta}
(z,p,\partial p)
K_\nu(w),
\]
where
$P_{\alpha,\beta}$
has at most polynomial growth in
$(\xi,\tau)$.

Consequently, for every compact interval
$I\Subset(0,\infty)$
there exist constants
$C_{\alpha,\beta,I}$,
$N_{\alpha,\beta}$
such that
\[
\sup_{z\in I}
\left|
\partial_z^\beta
\partial_{\xi,\tau}^\alpha
m_s(z;\xi,\tau)
\right|
\le
C_{\alpha,\beta,I}
(1+|\xi|+|\tau|)^{N_{\alpha,\beta}}.
\]
Since
$\widehat u$
is rapidly decreasing,
all mixed derivatives
\[
\partial_z^\beta
\partial_{x,t}^\gamma U
\]
may therefore be obtained by differentiating under the inverse Fourier
transform.
This proves that
\[
U
\in
C^\infty(\mathbb R^{d+1}\times(0,\infty)).
\]

Finally, the multiplier
$m_s$
satisfies the ordinary differential equation
\[
\partial_{zz}m_s
+
\frac az
\partial_zm_s
-
p(\xi,\tau)m_s
=
0,
\]

which is precisely the Fourier transform of
\[
\partial_tU
-
i(\Delta_x+B_z^{(a)}U)
=
0.
\]
Taking inverse Fourier transforms gives the extension equation
pointwise.
For the boundary conditions, the pointwise multiplier limits from
Theorem~\ref{T:itstheFT} are upgraded to strong $L^2$ convergence by dominated
convergence: since $\widehat u\in\mathscr S$, the pointwise multiplier differences
are bounded, for $0<z\le1$, by a fixed polynomial in $(\xi,\tau)$, while
$\widehat u$ decays faster than any polynomial. This proves item~(3).

\end{proof}

Theorem \ref{T:classical} shows that the oscillatory Poisson integral provides the
natural classical solution of the extension problem for Schwartz boundary
data. The next section shows that the same construction extends beyond
the Schwartz class to the natural $L^2$ graph space of the fractional
Schr\"odinger operator. Somewhat unexpectedly, the resulting theory
exhibits a qualitative dichotomy according to whether
\[
0<s\le\frac12
\qquad\text{or}\qquad
\frac12<s<1.
\]
As will become apparent, this dichotomy is not merely a technical feature
of the extension procedure, but reflects the same change in dispersive
behavior of the Bessel Schr\"odinger propagator that was first identified
in \cite{GS1,GS2}.


\section{Extension of the oscillatory Poisson operator to $L^2$}\label{S:L2poi}

The oscillatory Poisson kernel introduced in Definition \ref{D:Pa} provides an explicit
representation of the solution to the extension problem for Schwartz data.
The purpose of this section is to show that the corresponding extension operator
extends beyond the Schwartz class to the appropriate $L^2$-based graph spaces, and that the boundary trace and the
Dirichlet-to-Neumann map remain valid in this setting.

Throughout this section we fix
$0<s<1$, 
and continue to denote by
$m_s(z;\xi,\tau)$
the multiplier \eqref{ms}.
The first step is to establish uniform estimates for the oscillatory
multiplier. Unlike the classical elliptic and parabolic extension
theories, these estimates exhibit a qualitative change at the threshold
\(s=\frac12\), reflecting the different dispersive behavior of the
underlying Bessel Schr\"odinger propagator.

\begin{lemma}\label{L:MultiplierBound}
Let $0<\nu<1$ and set $M_\nu(w)=w^\nu K_\nu(w)$ on the rays
$\arg w\in\{0,-\pi/2\}$. There exists $C_\nu>0$ such that
\[
|M_\nu(w)|\le C_\nu,\qquad 0<\nu\le\frac12,
\]
and, if $\frac12<\nu<1$,
\[
|M_\nu(w)|\le C_\nu\bigl(1+|w|^{\nu-\frac12}\bigr).
\]
Consequently, for $(\xi,\tau)\in L^+\cup L^-$,
\[
|m_s(z;\xi,\tau)|\le C_s
\Bigl(1+(z|p(\xi,\tau)|^{1/2})^{(s-\frac12)_+}\Bigr).
\]
\end{lemma}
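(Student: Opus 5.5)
The plan is to treat the two rays separately, and on each ray to split into $|w|\le1$ and $|w|\ge1$. The only genuinely new feature compared with the elliptic case is the large-$|w|$ behaviour on the ray $\arg w=-\pi/2$. There $K_\nu$ no longer decays exponentially and behaves like a Hankel function.

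\emph{Small arguments.} For $0<|w|\le1$ I would write
\[
K_\nu=\frac{\pi}{2\sin\nu\pi}\bigl(I_{-\nu}-I_\nu\bigr)
\]
and use the power series of $I_{\pm\nu}$. This shows that
\[
M_\nu(w)=\frac{\pi}{2\sin\nu\pi}\Bigl(w^{\nu}I_{-\nu}(w)-w^{\nu}I_\nu(w)\Bigr)
\]
is a sum of terms $w^{2k}$ and $w^{2\nu+2k}$ with summable coefficients. Taking principal branches on the closed sector $-\pi/2\le\arg w\le0$, it follows that $M_\nu$ extends continuously to $w=0$ with value $2^{\nu-1}\Gamma(\nu)$, which is the asymptotic \eqref{zeroKnu} already used in Theorem~\ref{T:itstheFT}. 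Hence $|M_\nu(w)|\le C_\nu$ for $|w|\le1$ on both rays, for every $0<\nu<1$.

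\emph{Large arguments, real ray.} For $w=r\ge1$ I would use the standard bound $K_\nu(r)\le C_\nu r^{-1/2}e^{-r}$, see \eqref{abK}. Then $|M_\nu(r)|\le C_\nu r^{\nu-1/2}e^{-r}$, which is bounded for every $\nu$.

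\emph{Large arguments, imaginary ray.} For $w=-ir$ with $r\ge1$ I would use the connection formula
\[
K_\nu(-ir)=\tfrac{\pi i}{2}\,e^{i\nu\pi/2}H^{(1)}_\nu(r).
\]
Combined with the Hankel asymptotic
\[
H^{(1)}_\nu(r)=\sqrt{\tfrac{2}{\pi r}}\,e^{i(r-\nu\pi/2-\pi/4)}\bigl(1+O(r^{-1})\bigr),
\]
with a remainder uniform for $r\ge1$ (for instance via the integral representation of $H^{(1)}_\nu$ for $\nu<1$), this gives $|H^{(1)}_\nu(r)|\le C_\nu r^{-1/2}$. Therefore
\[
|M_\nu(-ir)|=r^\nu|K_\nu(-ir)|\le C_\nu r^{\nu-\frac12}\qquad\text{for } r\ge1.
\]
This quantity is bounded when $\nu\le\frac12$ and is at most $C_\nu|w|^{\nu-\frac12}$ when $\frac12<\nu<1$. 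Combining with the small-argument bound gives both estimates of the lemma. As a sanity check, for $\nu=\frac12$ the closed form $M_{1/2}(w)=\sqrt{\pi/2}\,e^{-w}$ has modulus exactly $\sqrt{\pi/2}$ on the imaginary ray. This also shows that the growth for $\nu>\frac12$ is genuine and not an artifact of the method.

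\emph{Consequence for $m_s$.} By \eqref{ms}, $m_s(z;\xi,\tau)=\frac{2^{1-s}}{\Gamma(s)}M_s(w)$ with $w=z\theta(\xi,\tau)|p(\xi,\tau)|^{1/2}$. Here $\arg w\in\{0,-\pi/2\}$ and $|w|=z|p|^{1/2}$. Applying the bound just proved with $\nu=s$ yields
\[
|m_s(z;\xi,\tau)|\le C_s\bigl(1+(z|p|^{1/2})^{(s-\frac12)_+}\bigr),
\]
where both cases are encoded by the exponent $(s-\frac12)_+$.

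The main obstacle I anticipate is the uniformity of the Hankel remainder for $r\ge1$, as opposed to a merely asymptotic statement. I would handle it through the standard error bound for the Hankel expansion: for $0<\nu<1$ the first remainder term is bounded by $|\nu^2-\tfrac14|/(2r)$ times the leading term. This gives an explicit constant $C_\nu$ with no further work.
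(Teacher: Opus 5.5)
Your proof is correct and follows the same route as the paper. Boundedness for small $|w|$ comes from the behaviour \eqref{zeroKnu} at the origin, and for large $|w|$ you use exponential decay on the real ray together with $|K_\nu(w)|\approx|w|^{-1/2}$ on the ray $\arg w=-\pi/2$, which produces the factor $|w|^{\nu-\frac12}$. Your detour through $H^{(1)}_\nu$ via the connection formula is just \eqref{abK} read at $w=-ir$ (admissible there, since $|\arg w|\le\pi-\delta$). The uniformity you flag as the main obstacle is automatic: the $O(|w|^{-1})$ remainder is controlled for $|w|\ge R_0$, and $M_\nu$ is continuous, hence bounded, on the compact piece $1\le|w|\le R_0$ of each ray, so no explicit Hankel error constant is needed.
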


\begin{proof}
The small-argument estimate follows from \eqref{zeroKnu}. For large argument,
\eqref{abK} gives exponential decay on the positive real ray, while on the
ray $\arg w=-\pi/2$ it gives
\[
|M_\nu(w)|\le C_\nu |w|^{\nu-\frac12}(1+O(|w|^{-1})).
\]
Combining the small- and large-argument estimates proves the assertions.
\end{proof}

The boundedness properties of the multiplier established in Lemma~\ref{L:MultiplierBound}
allow the oscillatory Poisson operator to be extended beyond the
Schwartz class. As in Lemma~\ref{L:MultiplierBound}, the resulting theory exhibits a
qualitative dichotomy according to whether \(0<s\le\frac12\) or
\(\frac12<s<1\). In the former case the extension operator is bounded on
\(L^2(\mathbb R^{d+1})\), whereas in the latter it is naturally defined
on $\Dom(\Le^s)$, see Definition \ref{D:domain}.

\begin{theorem}\label{T:L2Extension}
Let $0<s<1$ and $z>0$. For $0<s\le\frac12$ let $u\in L^2(\RNu)$, while for $\frac12<s<1$ let $u\in\Dom(\Le^s)$. We define 
\[
\widehat{U}(\xi,z,\tau)
:=
m_s(z;\xi,\tau)\ \widehat u(\xi,\tau).
\]
If we define $T_z u(x,t) := U(x,z,t)$, then:
\begin{itemize}
\item[(i)] if $0<s\le\frac12$, $T_z:L^2(\RNu)
\longrightarrow
L^2(\RNu)$
is a bounded linear operator for every $z>0$. Moreover,  $U(\cdot,z,\cdot)\ 
\underset{z\to 0^+}{\longrightarrow}\ 
u$
\ strongly in\ $L^2(\mathbb R^{d+1})$ for every
$u\in L^2(\RNu)$;
\item[(ii)] if $\frac12<s<1$,  $T_z:\Dom(\Le^s)
\longrightarrow
L^2(\RNu)$
is a bounded linear operator for every $z>0$. Moreover, $U(\cdot,z,\cdot)\ 
\underset{z\to 0^+}{\longrightarrow}\ 
u$\ strongly in $L^2(\RNu)$ for every
$u\in\Dom(\Le^s)$.
\end{itemize}
\end{theorem}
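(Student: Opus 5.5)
The plan is to argue entirely on the Fourier side, using Plancherel together with the multiplier bound of Lemma~\ref{L:MultiplierBound}. Since $\widehat{T_zu}=m_s(z;\cdot)\widehat u$, boundedness of $T_z$ reduces to a pointwise bound for $|m_s(z;\xi,\tau)|$ against the relevant weight. Convergence as $z\to0^+$ reduces to dominated convergence for $|m_s(z;\xi,\tau)-1|^2|\widehat u(\xi,\tau)|^2$. The characteristic paraboloid $P=\{p=0\}$ is a null set in $\RNu$, so it may be ignored throughout.

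For (i), with $0<s\le\frac12$, Lemma~\ref{L:MultiplierBound} gives $|m_s(z;\xi,\tau)|\le C_s$ uniformly in $z>0$ and $(\xi,\tau)\in L^+\cup L^-$. Plancherel then yields $\|T_zu\|_{L^2}\le C_s\|u\|_{L^2}$, with a bound uniform in $z$. For the trace, observe that for fixed $(\xi,\tau)$ with $p\neq0$, Theorem~\ref{T:itstheFT} (the pointwise limit \eqref{pointwise}) gives $m_s(z;\xi,\tau)\to1$ as $z\to0^+$. Indeed $w=z\theta|p|^{1/2}\to0$ along a fixed ray, and $M_s(w)\to2^{s-1}\Gamma(s)$ by \eqref{zeroKnu}. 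The integrand $|m_s-1|^2|\widehat u|^2$ is dominated by $(C_s+1)^2|\widehat u|^2\in L^1$. Dominated convergence then gives $\|U(\cdot,z,\cdot)-u\|_{L^2}\to0$.

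For (ii), with $\frac12<s<1$, the lemma gives $|m_s(z;\xi,\tau)|\le C_s\bigl(1+z^{s-\frac12}|p|^{\frac s2-\frac14}\bigr)$. Since $0<\frac s2-\frac14<s$, we have $|p|^{\frac s2-\frac14}\le 1+|p|^s$. Hence $|m_s|^2|\widehat u|^2\le C(1+z^{2s-1})\bigl(|\widehat u|^2+|p|^{2s}|\widehat u|^2\bigr)$, and by \eqref{equigraph} this gives $\|T_zu\|_{L^2}\le C_s(1+z^{s-\frac12})\|u\|_{\Dom(\Le^s)}$. For the trace, restrict to $0<z\le1$. The same bound provides the $z$-independent dominating function $C\bigl(1+|p|^{s}\bigr)^2|\widehat u|^2$, which is integrable precisely because $u\in\Dom(\Le^s)$. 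The pointwise convergence $m_s\to1$ is as before, so dominated convergence concludes.

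The only delicate point is the large-argument bound on the ray $\arg w=-\frac\pi2$, where $K_s$ does not decay. This is already packaged in Lemma~\ref{L:MultiplierBound}, so the main work is checking that the growth exponent $\frac s2-\frac14$ of $|p|$ is dominated by the graph weight $(1+|p|^2)^{s/2}$. This holds since $\frac s2-\frac14<s$; in fact $\Dom(\Le^{\frac s2-\frac14})$ would already suffice.
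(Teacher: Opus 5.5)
Your proposal is correct and follows the paper's proof essentially step for step. Both arguments use Plancherel with Lemma~\ref{L:MultiplierBound}: a uniform bound and dominated convergence for $s\le\frac12$, and for $s>\frac12$ absorption of the growth $z^{s-\frac12}|p|^{\frac s2-\frac14}$ into the graph weight on $0<z\le1$. Your explicit constant $C_s(1+z^{s-\frac12})$ and your remark that $\Dom(\Le^{\frac s2-\frac14})$ would suffice correspond to the paper's own comments on the growth in $z$ and on the non-optimized graph exponent.
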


\begin{proof}
By Plancherel's theorem, we have
\[
\|T_zu\|_{L^2(\RNu)}^2 = \|\widehat{U}(\cdot,z,\cdot)\|_{L^2(\RNu)}^2
=
\int_{\R^{d+1}}
|m_s(z;\xi,\tau)|^2\,|\widehat u(\xi,\tau)|^2\,d\xi\,d\tau.
\]

We consider separately the two regimes.

\smallskip

\noindent \underline{Case (i):} $0<s\le \frac12$.
In this range, (i) in Lemma \ref{L:MultiplierBound}
 yields a uniform bound
\[
|m_s(z;\xi,\tau)|\le C_s,
\qquad z>0,\ \ (\xi,\tau)\in L^+\cup L^-.
\]
Hence
\[
\|T_zu\|_{L^2(\R^{d+1})}^2
\le
C_s^2 \int_{\R^{d+1}} |\widehat u(\xi,\tau)|^2\,d\xi\,d\tau
=
C_s^2 \|u\|_{L^2(\R^{d+1})}^2,
\]
so $T_z$ is bounded on $L^2(\RNu)$.
Moreover, \eqref{pointwise} shows that for every fixed $(\xi,\tau)\in L^+\cup L^-$ one has 
\[
m_s(z;\xi,\tau)\longrightarrow 1
\qquad\text{as }z\to0^+,
\]
and again (i) in Lemma \ref{L:MultiplierBound} provides the integrable majorant $C_s^2|\widehat u(\xi,\tau)|^2$ for the Plancherel integrand.
Therefore dominated convergence gives
\[
\|U(\cdot,z,\cdot)-u\|_{L^2(\R^{d+1})}\to0
\qquad\text{as }z\to0^+.
\]

\smallskip

\noindent\underline{Case (ii):} $\frac12<s<1$.
In this range, (ii) in Lemma \ref{L:MultiplierBound} gives
\[
|m_s(z;\xi,\tau)|
\le
C_s\bigl(1+(z|p(\xi,\tau)|^{1/2})^{\,s-\frac12}\bigr)
\le C_s\bigl(1+|p(\xi,\tau)|^{\,\frac{2s-1}{4}}\bigr),
\qquad 0<z\le1 .
\]
We restrict to $0<z\le1$, which entails no loss since only the limit $z\to0^+$
is at issue; we note in passing that as $z\to\infty$ the operator norm of $T_z$
does grow, like $z^{s-\frac12}$, so the bound below is uniform on $(0,1]$ but
not on $(0,\infty)$. Squaring the preceding estimate, for $0<z\le1$ we obtain
\[
|m_s(z;\xi,\tau)|^2
\le C_s\bigl(1+|p(\xi,\tau)|^{\,s-\frac12}\bigr)
\le C_s\bigl(1+|p(\xi,\tau)|^{\,2s}\bigr).
\]
For an arbitrary fixed $z>1$ the same argument gives boundedness with a
constant depending on $z$, which is sufficient for the first assertion of
part~(ii).
Hence, if $u\in\Dom(\Le^s)$, then
\[
\|T_zu\|_{L^2(\RNu)}^2
\le
C_s
\int_{\RNu}
\bigl(1+|p(\xi,\tau)|^{\,2s}\bigr)\,|\widehat u(\xi,\tau)|^2\,d\xi\,d\tau \le C_s\ ||u||^2_{\Dom(\Le^s)},
\]
where in the last inequality we have used the norm equivalence \eqref{equigraph}. This shows that $T_z:\Dom(\Le^s)\to L^2(\RNu)$ is bounded. We do not optimize the graph exponent here; the preceding estimate in fact permits a weaker domain assumption.
Finally, since \(m_s(z;\xi,\tau)\to1\) pointwise as \(z\to0^+\), and
\[
|m_s(z;\xi,\tau)-1|^2
\le
C_s\bigl(1+|p(\xi,\tau)|^{\,2s}\bigr),
\]
dominated convergence yields
\[
\|U(\cdot,z,\cdot)-u\|_{L^2(\R^{d+1})}\to0
\qquad\text{as }z\to0^+.
\]
This completes the proof.

\end{proof}

\begin{theorem}[Dirichlet-to-Neumann convergence in $L^2$]\label{T:DtNL2}
Let $0<s<1$. If $\frac12\le s<1$ and $u\in \Dom(\Le^s)$, or if $0<s<\frac12$ and $u\in \Dom(\Le^{\frac14+\frac s2})$, then the strong limit in $L^2(\RNu)$
\begin{equation}\label{des}
-\ \underset{z\to 0^+}{\lim}\ \frac{\Gamma(s)e^{i\pi s/2}}{2^{1-2s}\Gamma(1-s)} \ z^a \p_z U(\cdot,z,\cdot)\ =\  
\Le^s u.
\end{equation}
\end{theorem}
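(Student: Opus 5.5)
Since the operator $z\mapsto z^a\partial_z U$ acts on the Fourier side as a multiplier, the plan is to reduce \eqref{des} to a dominated convergence argument in phase space via Plancherel, exactly as in Theorem \ref{T:L2Extension}. From the computation in the proof of Theorem \ref{T:itstheFT} (both on $L^+$ and on $L^-$), using $\frac{d}{dw}(w^sK_s(w))=-w^sK_{s-1}(w)=-w^{2s-1}M_{1-s}(w)$ with $w=z\theta|p|^{1/2}$, one obtains for $(\xi,\tau)\in L^+\cup L^-$
\[
z^a\partial_z\widehat U(\xi,z,\tau)=-\frac{2^{1-s}}{\G(s)}\,\theta^{2s}|p|^{s}\,M_{1-s}\bigl(z\theta|p|^{1/2}\bigr)\,\widehat u(\xi,\tau).
\]
Setting $c=\frac{\G(s)e^{i\pi s/2}}{2^{1-2s}\G(1-s)}$ and noting $e^{i\pi s/2}\theta^{2s}=(i\,\mathrm{sgn}\,p)^s$ up to the branch already fixed in Theorem \ref{T:itstheFT}, the left side of \eqref{des} has Fourier transform $(ip)^s\,n_s(z;\xi,\tau)\,\widehat u$, where $n_s(z;\cdot)=\frac{2^s}{\G(1-s)}M_{1-s}(z\theta|p|^{1/2})$. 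By \eqref{zeroKnu}, $n_s(z;\xi,\tau)\to1$ pointwise as $z\to0^+$ (the paraboloid $P$ has measure zero). Hence by Plancherel
\[
\Bigl\|-c\,z^a\partial_zU(\cdot,z,\cdot)-\Le^su\Bigr\|_{L^2}^2=\int_{\RNu}|p|^{2s}\,|n_s(z;\xi,\tau)-1|^2\,|\widehat u|^2\,d\xi\,d\tau,
\]
and it suffices to produce an integrable majorant, uniform for $0<z\le1$.

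The majorant comes from Lemma \ref{L:MultiplierBound} applied with $\nu=1-s$. If $\frac12\le s<1$, then $\nu\le\frac12$ and $|n_s|\le C_s$ uniformly in $z>0$. The integrand is therefore dominated by $C|p|^{2s}|\widehat u|^2$, which is integrable precisely because $u\in\Dom(\Le^s)$. If $0<s<\frac12$, then $\frac12<\nu<1$ and $|n_s|\le C(1+(z|p|^{1/2})^{\frac12-s})\le C(1+|p|^{\frac14-\frac s2})$ for $0<z\le1$. The integrand is then dominated by $C(|p|^{2s}+|p|^{\frac12+s})|\widehat u|^2\le C(1+|p|^2)^{\frac14+\frac s2}|\widehat u|^2$, using $2s<\frac12+s$. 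This is integrable exactly when $u\in\Dom(\Le^{\frac14+\frac s2})$, by \eqref{equigraph} with exponent $\frac14+\frac s2\in(0,1)$. Dominated convergence then gives \eqref{des} in both regimes.

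The main point requiring care is the $L^-$ phase bookkeeping: one must check that $e^{i\pi s/2}(-i)^{2s}=e^{-i\pi s/2}$ matches the symbol of $\Le^s$ on $L^-$. This was already verified pointwise in Theorem \ref{T:itstheFT}, so I would simply quote it. A secondary point is the growth bound for $M_{1-s}$ on the ray $\arg w=-\pi/2$ when $1-s>\frac12$. This is where the stronger hypothesis for $s<\frac12$ is needed, and it is supplied by Lemma \ref{L:MultiplierBound}, which I take as given. Finally, I would note that $z^a\partial_zU(\cdot,z,\cdot)\in L^2$ for each fixed small $z$, which follows from the same majorant.
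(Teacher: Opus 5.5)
Your proposal is correct and follows the paper's proof: Plancherel reduces the claim to the multiplier $z^a\partial_z m_s=-\frac{2^{1-s}}{\Gamma(s)}\theta^{2s}|p|^sM_{1-s}(z\theta|p|^{1/2})$, its pointwise limit is $-(ip)^s$ after normalisation, and Lemma~\ref{L:MultiplierBound} with $\nu=1-s$ gives the majorant for dominated convergence in the two ranges. The only difference is that you carry out the exponent bookkeeping explicitly, namely $|p|^{2s}+|p|^{\frac12+s}\lesssim(1+|p|^2)^{\frac14+\frac s2}$ together with \eqref{equigraph}, whereas the paper leaves this step implicit.
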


\begin{proof}
By Plancherel's theorem and \eqref{onL+0} in Theorem \ref{T:itstheFT}, 
\begin{align*}
& \bigl\|\frac{\Gamma(s)e^{i\pi s/2}}{2^{1-2s}\Gamma(1-s)} \ z^a \p_z U(\cdot,z,\cdot)
+\ \Le^su
\bigr\|_{L^2(\RNu)}^2
\\
& =
\int_{\RNu}
\left|
\frac{\Gamma(s)e^{i\pi s/2}}{2^{1-2s}\Gamma(1-s)} z^a\ \p_z m_s(z;\xi,\tau)
+\ (ip(\xi,\tau))^s
\right|^2
|\widehat u(\xi,\tau)|^2\,d\xi\,d\tau,
\end{align*}
where in the last equality we have used \eqref{supernicehatV0}.
Hence it suffices to show pointwise convergence of the multiplier and to
find an \(L^2\)-majorant independent of \(z\).
From the proof of Theorem \ref{T:itstheFT} we have
\[
z^a\partial_z m_s(z;\xi,\tau)
=
-\frac{2^{\,1-s}}{\Gamma(s)}\,
\theta(\xi,\tau)^{2s}\,
|p(\xi,\tau)|^s\,
M_{1-s}\!\bigl(z\theta(\xi,\tau)|p(\xi,\tau)|^{1/2}\bigr),
\]
where \(M_\nu(w)=w^\nu K_\nu(w)\). Since
\[
M_{1-s}(w)\longrightarrow 2^{-s}\Gamma(1-s)
\qquad\text{as }w\to0,
\]
we obtain the pointwise limit
\[
\frac{\Gamma(s)e^{i\pi s/2}}{2^{1-2s}\Gamma(1-s)}\ z^a\partial_z m_s(z;\xi,\tau)
\underset{z\to 0^+}{\longrightarrow}\ 
- (ip(\xi,\tau))^s.
\]
Applying Lemma~\ref{L:MultiplierBound} with $\nu=1-s$, for $\frac12\le s<1$ the function $M_{1-s}$ is bounded on the admissible rays,
and therefore the majorant $C_s|p|^s$ follows. For $0<s<\frac12$, on $L^-$ one has
$|M_{1-s}(-iy)|\le C_s(1+y^{\frac12-s})$, and hence
\[
|z^a\partial_z m_s(z;\xi,\tau)|\le C_s|p(\xi,\tau)|^s\left(1+(z|p(\xi,\tau)|^{1/2})^{\frac12-s}\right).
\]
For $u\in\Dom(\Le^{\frac14+\frac s2})$ this yields an $L^2$ majorant for $0<z\le1$.
Thus dominated convergence applies in the two stated ranges.

\end{proof}

Theorems \ref{T:L2Extension} and~\ref{T:DtNL2} extend the oscillatory
Poisson construction beyond the Schwartz class while preserving the
Dirichlet-to-Neumann correspondence. The behavior of the $L^-$ Bessel multiplier
changes at $s=\frac12$: for $s\ge\frac12$ the natural graph assumption
$u\in\Dom(\Le^s)$ suffices for the Neumann convergence, whereas for
$0<s<\frac12$ the stronger hypothesis stated in Theorem~\ref{T:DtNL2} is
required. This is the same dispersive transition identified in
\cite{GS1,GS2}. Having established the extension theory at the
natural $L^2$-level, we next turn to the potential theory associated
with the fractional Schr\"odinger operator, beginning with the
construction of its inverse fractional powers and its fundamental
solution.

\section{Inverse fractional Schr\"odinger operators and their fundamental solutions}\label{S:inverse}

In this section we develop the potential theory associated with the fully
fractional Schr\"odinger operator $\Le^s$.
Our starting point is the construction of the inverse fractional powers of
$\mathcal L$ by means of the shifted Schr\"odinger semigroup
$\{P_\sigma^{\mathcal L}\}_{\sigma>0}$,
thus providing the dispersive analogue of the classical Riesz potentials
associated with the heat semigroup, see \cite{R, Samp, St, Sa, NyS, ST}.
We prove that the resulting operator coincides with the inverse fractional
power $\mathcal L^{-s}$, derive an explicit formula for its integral kernel,
and identify this kernel as a fundamental solution of $\mathcal L^s$.
Finally, we compute its Poisson extension and show that, up to an explicit
multiplicative constant, it coincides with the integral kernel of the
boundary operator $\Theta_a^\star$ introduced in \cite{GS2}.

To begin, we observe that, given $u\in \mathscr S(\R^{d+1})$, the standard dispersive estimate deriving from \eqref{PsS} implies
\begin{equation}\label{dispps}
\|\ps u(\cdot,t)\|_{L^\infty(\Rd)}\le C(d) |\sigma|^{-d/2} ||u(\cdot,t-\sigma)||_{L^1(\Rd)}.
\end{equation}
For every $1\le p\le \infty$ we obtain from \eqref{dispps}
\begin{equation}\label{pp}
\|\ps u\|_{L^p_t L^\infty_x}\le C(d) |\sigma|^{-d/2} ||u||_{L^p_t L^1_x},\ \ \ \sigma>0.
\end{equation}
On the other hand, the unitary identity \eqref{Suni} in Proposition \ref{P:L2} gives
\begin{equation}\label{twotwo}
\|\ps u\|_{L^2_t L^2_x} = ||u||_{L^2_t L^2_x},\ \ \ \ \ \sigma>0.
\end{equation}
Motivated by the representation \eqref{fS} of the fractional powers $\Le^s$, we now
introduce the inverse potential associated with the Schr\"odinger semigroup.

\begin{definition}\label{D:Spot}
Let $0<s<\frac d2$. For any $u\in \mathscr S(\R^{d+1})$, we define
\[
\mathscr J_{2s}(u)(x,t) = \frac{1}{\G(s)} \int_0^\infty \sigma^{s -1} \ps u(x,t) d\sigma.
\]
\end{definition}
 
Let us notice that $(x,t)\to \mathscr J_{2s}(u)(x,t)$ is measurable and the integral converges absolutely for a.e. $(x,t)\in \R^{d+1}$. This can be seen by writing
\begin{align*}
\mathscr J_{2s}(u)(x,t) & = \frac{1}{\G(s)} \int_0^1 \sigma^{s -1} \ps u(x,t) d\sigma + \frac{1}{\G(s)} \int_1^\infty \sigma^{s -1} \ps u(x,t) d\sigma
\\
& = \mathscr J^0_{2s}(u)(x,t) + \mathscr J^\infty_{2s}(u)(x,t),
\end{align*}
and observing that
\eqref{twotwo} implies 
\[
||\mathscr J^0_{2s}(u)||_{L^2(\R^{d+1})} \le \frac{1}{\G(1+s)} ||u||_{L^2(\R^{d+1})}.
\]
Furthermore, since $0<s<\frac d2$, from \eqref{dispps} we obtain for any $1\le p\le \infty$
\[
||\mathscr J^\infty_{2s}(u)||_{L^p_t L^{\infty}_x} \le C(d) ||u||_{L^p_t L^{1}_x}\ \int_1^\infty \sigma^{-\frac d2 + s -1} d\sigma = \frac{2\ C(d)}{d-2s} ||u||_{L^p_t L^{1}_x} < \infty. 
\]

The next theorem identifies the operator introduced in Definition \ref{D:Spot} with the inverse
fractional power of the Schr\"odinger operator.

Before stating the inversion theorem we make one point explicit, since it
affects the sense in which the two operators may be composed. If
$u\in\mathscr S(\R^{d+1})$, then $\Le^su\notin\mathscr S(\R^{d+1})$ in general:
being nonlocal of positive order, $\Le^s$ destroys rapid decay and leaves only
polynomial decay. The same is true of $\mathscr J_{2s}u$. Neither composition
below is therefore a composition of maps of $\mathscr S$ into itself, and for
this reason we regularise the defining integral. For $\ve>0$ and
$v\in L^2(\R^{d+1})$ we set
\begin{equation}\label{Jeps}
\mathscr J^\ve_{2s}(v)(x,t) := \frac{1}{\G(s)} \int_0^\infty \sigma^{s -1} e^{-\ve \sigma}\,P^\Le_\sigma v(x,t)\, d\sigma,
\end{equation}
an absolutely convergent $L^2$-valued integral, since
$\|P^\Le_\sigma v\|_{L^2}=\|v\|_{L^2}$ by \eqref{twotwo} and
$\int_0^\infty \sigma^{s-1}e^{-\ve\sigma}d\sigma<\infty$. On the class of
Definition \ref{D:Spot} one has $\mathscr J^\ve_{2s}(v)\to \mathscr J_{2s}(v)$
as $\ve\to0^+$.

\begin{theorem}\label{T:inverse}
Let $0<s<\min\{1,d/2\}$ and $u\in \mathscr S(\R^{d+1})$. Then
\begin{itemize}
\item[(i)] $\mathscr J^\ve_{2s}(\Le^s u)\longrightarrow u$ in $L^2(\R^{d+1})$ as
$\ve\to0^+$; we write this as $\mathscr J_{2s}\Le^su=u$;
\item[(ii)] if moreover $|p|^{-s}\hat u\in L^2(\R^{d+1})$ --- which holds
automatically when $0<s<\frac12$ --- then $\mathscr J_{2s}u\in\Dom(\Le^s)$ and
$\Le^s\mathscr J_{2s}u=u$.
\end{itemize}
In this sense $\mathscr J_{2s}=\Le^{-s}$.
\end{theorem}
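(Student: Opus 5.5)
The whole argument will be carried out on the Fourier side, where both operators become multipliers. By \eqref{fSF}, for $v\in L^2(\R^{d+1})$ the regularised potential \eqref{Jeps} is the Bochner integral of $\sigma\mapsto \sigma^{s-1}e^{-\ve\sigma}P^\Le_\sigma v$ with values in $L^2$. Plancherel therefore commutes with it and gives
\[
\widehat{\mathscr J^\ve_{2s}v}(\xi,\tau)=\frac{1}{\G(s)}\int_0^\infty \sigma^{s-1}e^{-\sigma(\ve+ip(\xi,\tau))}\,d\sigma\ \hat v(\xi,\tau)=(\ve+ip(\xi,\tau))^{-s}\hat v(\xi,\tau).
\]
This uses the elementary Gamma integral with complex parameter of positive real part, principal branch. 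As $\ve\to0^+$ we have $(\ve+ip)^{-s}\to(ip)^{-s}$ pointwise for $p\neq0$, with the same branch as in Proposition \ref{P:FTfp}: $e^{\mp i\pi s/2}|p|^{-s}$ on $L^\pm$. The paraboloid $P=\{p=0\}$ has measure zero, so it can be ignored.

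\textbf{Part (i).} Take $v=\Le^su$. By Proposition \ref{P:FTfp} (equivalently Definition \ref{D:domain}), $\hat v=(ip)^s\hat u\in L^2$. The multiplier of $\mathscr J^\ve_{2s}\Le^s u$ is then $(ip)^s(\ve+ip)^{-s}$, which tends to $1$ pointwise. It is bounded uniformly in $\ve$, because $|\ve+ip|\ge|p|$ gives $|(ip)^s(\ve+ip)^{-s}|\le 1$. Since $\hat u\in L^2$, dominated convergence yields $\|\mathscr J^\ve_{2s}\Le^su-u\|_{L^2}\to0$.

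\textbf{Part (ii).} Set $w=\mathscr J_{2s}u$. I would first show that $w$ is the $L^2$ function with $\hat w=(ip)^{-s}\hat u$. The hypothesis $|p|^{-s}\hat u\in L^2$ and the bound $|(\ve+ip)^{-s}|\le|p|^{-s}$ give $\mathscr J^\ve_{2s}u\to \mathscr F^{-1}[(ip)^{-s}\hat u]$ in $L^2$ by dominated convergence. On the other hand, $\mathscr J^\ve_{2s}u\to\mathscr J_{2s}u$ pointwise a.e., as observed after \eqref{Jeps}; this is monotone-type convergence of the absolutely convergent integrals split into $\mathscr J^0$ and $\mathscr J^\infty$. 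An $L^2$ limit and an a.e. limit must agree along a subsequence, so the two coincide. Then $|p|^s\hat w=|\hat u|\in L^2$ and $w\in L^2$, hence $w\in\Dom(\Le^s)$, and $\widehat{\Le^s w}=(ip)^s(ip)^{-s}\hat u=\hat u$.

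\textbf{Automatic hypothesis for $s<\frac12$.} It remains to check that $|p|^{-s}\hat u\in L^2$ when $0<s<\frac12$. Since $\hat u$ is Schwartz, it suffices to show that $|p|^{-2s}$ is locally integrable in $\R^{d+1}$. Change variables $\tau\mapsto p=2\pi\tau+4\pi^2|\xi|^2$ at fixed $\xi$. The $\tau$-integral near the paraboloid becomes $\int_{|p|<1}|p|^{-2s}dp<\infty$ precisely when $2s<1$. Away from $P$ there is nothing to check. I expect this step, together with the identification of the a.e.-defined $\mathscr J_{2s}u$ with the Fourier-side object, to be the only delicate points; everything else is dominated convergence with explicit majorants.
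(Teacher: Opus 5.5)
Your proof is correct and follows the paper's argument. Both identify $\mathscr J^\ve_{2s}$ as the Fourier multiplier $(\ve+ip)^{-s}$, prove (i) by dominated convergence with the majorant $|(ip)^s(\ve+ip)^{-s}|\le 1$, prove (ii) with the majorant $|p|^{-s}|\hat u|$, and get the case $s<\frac12$ from the integrability of $|p|^{-2s}$ across the paraboloid. Your identification of the pointwise-defined $\mathscr J_{2s}u$ with the $L^2$ limit, via an a.e.\ convergent subsequence, fills in a step the paper only asserts; note that the pointwise convergence in $\sigma$ there is by dominated, not monotone, convergence, since $P^\Le_\sigma u$ is complex-valued.
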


\begin{proof}
Both assertions are immediate on the Fourier side. By \eqref{fSF} and
\eqref{Jeps}, for $v\in L^2(\R^{d+1})$,
\[
\widehat{\mathscr J^\ve_{2s}(v)}
= \frac{1}{\G(s)}\int_0^\infty \sigma^{s-1}e^{-(\ve+ip)\sigma}\,d\sigma\ \hat v
= (\ve + i p)^{-s}\,\hat v ,
\]
the interchange being legitimate because the $\sigma$-integral converges
absolutely in $L^2$, and the elementary identity
$\int_0^\infty\sigma^{s-1}e^{-\la\sigma}d\sigma=\G(s)\la^{-s}$ being valid for
$\Re\la>0$ with the principal branch.

For (i) we take $v=\Le^su$, so that $\hat v=(ip)^s\hat u$ by \eqref{fp2}, and
obtain
\[
\widehat{\mathscr J^\ve_{2s}(\Le^su)}
= \left(\frac{ip}{\ve+ip}\right)^{s}\hat u .
\]
Since $p$ is real and $\ve>0$ we have $|\ve+ip|^2=\ve^2+p^2\ge p^2$, whence
$\bigl|\bigl(\tfrac{ip}{\ve+ip}\bigr)^{s}\bigr|\le1$, while for every
$(\xi,\tau)$ with $p(\xi,\tau)\ne0$ the same quantity tends to $1$ as
$\ve\to0^+$. The set $\{p=0\}$ is the paraboloid
$2\pi\tau+4\pi^2|\xi|^2=0$, a smooth hypersurface and hence a null set in
$\R^{d+1}$. Dominated convergence with majorant $|\hat u|\in L^2$ therefore
gives $\widehat{\mathscr J^\ve_{2s}(\Le^su)}\to\hat u$ in $L^2$, and (i)
follows by Plancherel. Note that this argument uses only $0<s<1$.

For (ii), $(\ve+ip)^{-s}\hat u\to (ip)^{-s}\hat u$ pointwise off the paraboloid
and $|(\ve+ip)^{-s}\hat u|\le |p|^{-s}|\hat u|$, which is in $L^2$ by
hypothesis; hence $\mathscr J_{2s}u$ has Fourier transform $(ip)^{-s}\hat u$
and $|p|^s\widehat{\mathscr J_{2s}u}=\hat u\in L^2$, so that
$\mathscr J_{2s}u\in\Dom(\Le^s)$ by Definition \ref{D:domain} and
$\widehat{\Le^s\mathscr J_{2s}u}=(ip)^s(ip)^{-s}\hat u=\hat u$. The
parenthetical sufficient condition follows because $|p|$ vanishes simply and
transversally on the paraboloid, so that $|p|^{-2s}$ is locally integrable
across it precisely when $2s<1$.
\end{proof}

\begin{remark}\label{R:inverse-real}
The identity in (i) admits the following direct real-variable derivation, which
we record because it exhibits the mechanism behind the cancellation. It is
formal to the extent that it manipulates the unregularised integrals.
Note that, since $\Le u\in \mathscr S(\R^{d+1})$,  an easy integration by parts shows that  Definition \eqref{fS} can be written in the following alternative way
\begin{equation}\label{fSalt}
\Le^s u(x,t) = - \frac{1}{\G(1-s)} \int_0^\infty \frac{1}{\sigma^s} \frac{d}{d\sigma} \ps u(x,t) d\sigma.
\end{equation}
In view of \eqref{fSalt} and of Definition \ref{D:Spot}, we find
\begin{align*}
& \mathscr J_{2s}(\Le^s u)(x,t) = - \frac{1}{\G(s)\G(1-s)} \int_0^\infty \int_0^\infty \sigma^{s-1} \frac{1}{\tau^{s}} \ps(\frac{d}{d\tau} P^{\Le}_\tau u)(x,t) d\tau d\sigma.
\end{align*}
To justify the calculation without assuming absolute convergence of the full double integral, we first truncate both semigroup parameters to a compact rectangle $\varepsilon<\sigma,\tau<R$. On this rectangle Fubini's theorem and the change of variables below are legitimate. We perform the calculation there and then let $\varepsilon\downarrow0$ and $R\uparrow\infty$. The boundary terms converge by the strong continuity of $P_\alpha^{\Le}$ at $\alpha=0$ and the dispersive decay \eqref{pp} as $\alpha\to\infty$; the beta factor is integrable because $0<s<1$. Thus the limiting identity is the one displayed below.

By the semigroup property of $P^\Le_\tau$ we have $P^\Le_\sigma \circ P^\Le_\tau = P^\Le_{\sigma+\tau}$, and since $P^\Le_\sigma$ and $\frac{d}{d\tau}$ commute, we have
\begin{align*}
& P^\Le_\sigma(\frac{d}{d\tau} P^\Le_\tau u)(x,t) = \frac{d}{d\tau} P^\Le_{\sigma+\tau} u(x,t).
\end{align*}
Substituting in the above formula we obtain
\begin{align}\label{gnam}
& \mathscr J_{2s}(\Le^s u)(x,t) = - \frac{1}{\G(s)\G(1-s)} \int_0^\infty \int_0^\infty \left(\frac \sigma{\tau}\right)^{s} \frac{d}{d\tau} P^\Le_{\sigma+\tau} u(x) d\tau \frac{d\sigma}\sigma.
\end{align}
In the integral in the right-hand side we now make the change of variable $\Phi:(0,\infty)\times(0,\infty)\to (0,\infty)\times(0,\infty)$
\begin{equation}\label{uv}
\alpha = \sigma + \tau,\ \ \ \ \ \ \beta = \frac \sigma{\tau},
\end{equation}
whose inverse $\Psi(\alpha,\beta) = (\sigma,\tau)$ is given by
\begin{equation}\label{ttau}
\sigma = \frac{\alpha\beta}{1+\beta},\ \ \ \ \ \ \tau = \frac{\alpha}{1+\beta}.
\end{equation}
The Jacobian of the diffeomorphism $\Psi$ is 
\[
d\sigma d\tau = \frac{\alpha}{(1+\beta)^2} d\alpha d\beta.
\]  
Keeping in mind that the chain rule gives 
\[
\frac{d}{d\tau} P^\Le_{\sigma+\tau} u(x,t) = \left(\frac{\p \alpha}{\p \tau} \frac{d}{d\alpha} + \frac{\p \beta}{\p\tau} \frac{d}{d\beta}\right) P^\Le_\alpha u(x,t) =  \frac{d}{d\alpha} P^\Le_\alpha u(x,t),
\]
we thus find
\begin{align*}
& \mathscr J_{2s}(\Le^s u)(x,t) = - \frac{1}{\G(s)\G(1-s)} \int_0^\infty \int_0^\infty \beta^{s} \frac{d}{d\alpha} P^\Le_{\alpha} u(x,t) \frac{\alpha}{(1+\beta)^2} \frac{1+\beta}{\alpha\beta} d\alpha d\beta
\\
& = - \frac{1}{\G(s)\G(1-s)} \int_0^\infty  \frac{\beta^{s - 1}}{1+\beta} d\beta\ \int_0^\infty \frac{d}{d\alpha} P^\Le_{\alpha} u(x,t) d\alpha = u(x,t),
\end{align*}
where we have appealed to \eqref{pp} to conclude that
\[
|P^\Le_{\alpha} u(x,t)|\le ||P^\Le_{\alpha} u||_{L^\infty(\R^{d+1})} \le C(d) |\alpha|^{-d/2} ||u||_{L_t^\infty L^1_x}\ \underset{\alpha\to \infty}{\longrightarrow}\ 0,
\]
and therefore
\[
\int_0^\infty \frac{d}{d\alpha} P^\Le_{\alpha} u(x,t) d\alpha = - u(x,t),
\]
whereas we have used the following well-known representation of the Beta function
\[
B(x,y) = \int_0^\infty \frac{t^{y-1}}{(1+t)^{x+y}} dt,\ \ \ \ \ \ x,y>0, 
\]
to infer that
\[
\frac{1}{\G(s)\G(1-s)} \int_0^\infty  \frac{\beta^{s - 1}}{1+\beta} d\beta = \frac{B(1-s,s)}{\G(s)\G(1-s)} = 1.
\]

\end{remark}

\subsection{Fundamental solution of the fractional Schr\"odinger operator $\Le^s$}\label{SS:fs}

Given $u\in \mathscr S(\R^{d+1})$, we have by \eqref{PsS} 
\begin{equation}\label{pss}
P^\Le_\sigma u(x,t)  = \int_{\Rd} S(x,y,\sigma) u(y,t-\sigma) dy = S(\sigma)(u(\cdot,t-\sigma))(x),\ \ \ \ \ \ \sigma>0.
\end{equation}
Substituting \eqref{pss} in Definition \ref{D:Spot}, we have for $0<s<\frac d2$ 
\[
\mathscr J_{2s}(u)(x,t) = \frac{1}{\G(s)} \int_0^\infty \sigma^{s -1} P^\Le_\sigma u(x,t) d\sigma.
\]
We thus find
\begin{align}\label{J2s}
\mathscr J_{2s}(u)(x,t) & = \frac{1}{\G(s)}  \int_\R \int_{\Rd} \mathbf 1_{(0,\infty)}(\sigma) \sigma^{s -1}  S(x,y,\sigma) u(y,t-\sigma) dy d\sigma  
\\
& = \frac{1}{\G(s)} \int_\R \int_{\Rd}  \frac{\mathbf 1_{(0,\infty)}(t-\sigma)}{(t-\sigma)^{1-s}}  S(x,y,t-\sigma) u(y,\sigma)  dy d\sigma. 
\notag
\end{align}
From Theorem \ref{T:inverse} it follows that the kernel 
of the operator $\mathscr J_{2s}$, 
\begin{equation}\label{Es}
E_s(x,t;y,\tau) =  \frac{1}{\G(s)} \frac{\mathbf 1_{(0,\infty)}(t-\tau)}{(t-\tau)^{1-s}}  S(x,y,t-\tau),\qquad 0<s<\min\{1,d/2\},
\end{equation}
provides a fundamental solution of the nonlocal operator $\Le^s$ with pole at $(y,\tau)$. This is equivalent to having
\begin{equation}\label{E:fundamental-inversion}
\frac{1}{\G(s)} \int_\R\int_{\Rd}
\frac{\mathbf 1_{(0,\infty)}(t-\tau)}{(t-\tau)^{1-s}}
S(x,y,t-\tau)\,\Le^s u(y,\tau)\,dy\,d\tau=u(x,t).
\end{equation}
Here the integration is over the pole variables $(y,\tau)$. This is the
causal inversion identity associated with $\mathscr J_{2s}$; no formal
self-adjointness of $\Le^s$ is being used.
It follows from \eqref{Es} that, for a suitable $F(x,t)$ defined in $\RNu$, the solution of the nonlocal dispersive equation
\begin{equation}\label{Lsu=F}
\Le^s u = F,
\end{equation}
is given for any $(x,t)\in \R^{d+1}$ by the convolution of $F$ with $\mathscr E_s$, i.e.
\begin{equation}\label{potential}
u(x,t) =  \frac{1}{\G(s)} \int_\R \int_{\Rd}  \frac{\mathbf 1_{(0,\infty)}(t-\tau)}{(t-\tau)^{1-s}}  S(x,y,t-\tau) F(y,\tau) dy d\tau.
\end{equation}
If, in particular, 
\begin{equation}\label{F}
F(x,t) = \begin{cases}
\Phi(x,t),\ \ \ \ \ \ \ \ \ t> 0,
\\
\Le^s v_0(x,t),\ \ \ \ \ t\le 0,
\end{cases}
\end{equation}
then formula \eqref{potential} gives the following result.

\begin{proposition}\label{P:fsLs}
Let $\Phi, v_0\in \mathscr S(\RNu)$. Then the solution of the equation \eqref{Lsu=F}, with $F$ given by \eqref{F}, is represented by the oscillatory integral
\begin{align}\label{pot2}
u(x,t) & = \frac{1}{\G(s)} \int_0^t \frac{1}{\sigma^{1-s}} S(\sigma)(\Phi(\cdot,t-\sigma))(x) d\sigma
+ \frac{1}{\G(s)} \int_{t}^\infty \frac{1}{\sigma^{1-s}} S(\sigma)(\Le^s v_0(\cdot,t-\sigma))(x)  d\sigma.
\end{align}
\end{proposition}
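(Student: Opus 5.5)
The plan is to start from the representation \eqref{potential}, which is the convolution of $F$ with the fundamental solution $E_s$ in \eqref{Es}. The key input is the causal inversion identity \eqref{E:fundamental-inversion}, i.e.\ Theorem \ref{T:inverse}(i). We split the pole-time integral at $\tau=0$, according to the two pieces of $F$ in \eqref{F}. Before doing so we need to fix in what sense $u$ solves $\Le^su=F$. Since $F$ is discontinuous across $t=0$ and is not Schwartz, I would define $u:=\mathscr J_{2s}F$. I would check that $F\in L^2(\R^{d+1})$: indeed $\Phi\mathbf 1_{t>0}\in L^2$, and $\Le^sv_0\in L^2$ because $|p|^s\hat v_0\in L^2$. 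Then $\Le^s u=F$ holds in the sense of Theorem \ref{T:inverse}(ii), via the Fourier multiplier $(ip)^{-s}$ with the regularisation $\mathscr J^\ve_{2s}$. For $0<s<\frac12$ this is automatic. For $\frac12\le s<1$, I would read the statement with $\mathscr J_{2s}$ understood as the $\ve\to0^+$ limit of $\mathscr J_{2s}^\ve$.

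With this in place the formula itself is a change of variables. In \eqref{potential} I would substitute $\sigma=t-\tau$, so that the indicator $\mathbf 1_{(0,\infty)}(t-\tau)$ becomes $\sigma>0$. The region $\tau>0$ corresponds to $0<\sigma<t$, where $F(\cdot,t-\sigma)=\Phi(\cdot,t-\sigma)$. The region $\tau\le0$ corresponds to $\sigma\ge t$, where $F=\Le^sv_0$. Using \eqref{pss}, the $y$-integral of $S(x,y,\sigma)F(y,t-\sigma)$ equals $S(\sigma)(F(\cdot,t-\sigma))(x)$. This gives the two terms of \eqref{pot2} for $t>0$. For $t\le0$ only the second term survives, with $\int_t^\infty$ replacing $\int_0^\infty$, consistent with the convention that the first integral is empty.

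The main obstacle is justifying the splitting and the Fubini-type manipulations, since the integrals are oscillatory and not absolutely convergent at $\sigma\to\infty$. I would work with the regularisation $e^{-\ve\sigma}$ of \eqref{Jeps}. For each $\ve>0$ everything converges absolutely as $L^2$-valued Bochner integrals, because $\|S(\sigma)g\|_{L^2}=\|g\|_{L^2}$, so the split at $\sigma=t$ is legitimate for each fixed $\ve$. The first piece has a finite $\sigma$-range, and $\sigma^{s-1}$ is integrable at $0$, so it converges as $\ve\to0$ to an absolutely convergent integral. For the second piece I would note that the whole of $\mathscr J^\ve_{2s}F$ converges in $L^2$, by dominated convergence on the Fourier side as in Theorem \ref{T:inverse}. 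Subtracting the first piece then shows that the tail integral converges as an improper oscillatory integral, and \eqref{pot2} holds in $L^2$ with that interpretation. For $0<s<\frac d2$ one can alternatively bound the tail pointwise via the dispersive estimate \eqref{dispps}, since $\sigma^{s-1-d/2}$ is integrable at infinity; this requires $\Le^sv_0\in L^\infty_tL^1_x$, and checking that decay of $\Le^s v_0$ is the delicate point I would handle last.
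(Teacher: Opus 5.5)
Your formal computation (substituting $\sigma=t-\tau$ in \eqref{potential} and splitting the pole-time integral at $\tau=0$ according to \eqref{F}) is exactly what the paper does. The paper states the proposition as a direct consequence of \eqref{potential}, with nothing beyond that substitution. The trouble is the rigorous layer you add on top.

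For $\frac12\le s<1$ (which lies in the range $s<\min\{1,d/2\}$ as soon as $d\ge2$), the regularised potentials $\mathscr J^\ve_{2s}F$ do \emph{not} converge in $L^2(\R^{d+1})$. An $L^2$ limit $u$ would satisfy $\hat u=(ip)^{-s}\hat F$, hence $|p|^{-s}\hat F\in L^2$. This fails as soon as the continuous function $\hat F$ is not identically zero on the paraboloid $P$ (take e.g.\ $v_0=0$, $F=\Phi\mathbf 1_{t>0}$), because $|p|^{-2s}$ is not integrable across $P$ when $2s\ge1$. Concretely, for $v_0=0$ one finds, as $t\to\infty$,
$\|u(\cdot,t)\|_{L^2_x}=\G(s)^{-1}t^{s-1}\bigl(\|\hat F(\cdot,-2\pi|\cdot|^2)\|_{L^2(\Rd)}+o(1)\bigr)$,
which is not square integrable in $t$. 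Consequently:
\begin{itemize}
\item the dominated-convergence step ``as in Theorem~\ref{T:inverse}'' has no majorant;
\item there is no $L^2$ limit from which to subtract the first piece, so the claim that \eqref{pot2} holds in $L^2$ is false in this range;
\item $u\notin\Dom(\Le^s)$, so Theorem~\ref{T:inverse}(ii) cannot be the sense in which $u$ solves \eqref{Lsu=F}.
\end{itemize}
Even for $s<\frac12$, ``automatic'' needs an argument, since $F$ jumps at $t=0$ and is not Schwartz. What one uses is $\p_x^\alpha F\in L^1$, so that $\hat F$ is bounded and rapidly decreasing in $\xi$ uniformly in $\tau$.

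What you are missing is that the obstacle you single out is not there, so no regularisation is needed. For fixed $t$, the tail $\sigma>t$ lies entirely in the history region $t-\sigma<0$. On $\tau\le0$ one has $\|\Le^sv_0(\cdot,\tau)\|_{L^2_x}\le C_N(1+|\tau|)^{-N}$, because the causal formula \eqref{fS} only involves times $\le\tau$. Split the $\sigma$-integral there:
\begin{itemize}
\item for $\sigma\le1$, use $\|P^\Le_\sigma v_0(\cdot,\tau)-v_0(\cdot,\tau)\|_{L^2_x}\le\sigma\sup_{\tau'\le\tau}\|\Le v_0(\cdot,\tau')\|_{L^2_x}$;
\item for $\sigma\ge1$, unitarity of $S(\sigma)$ suffices.
\end{itemize}
Together with $\|S(\sigma)g\|_{L^2_x}=\|g\|_{L^2_x}$ and the integrability of $\sigma^{s-1}$ at $0$, this makes both terms of \eqref{pot2} absolutely convergent $L^2_x$-valued Bochner integrals for every $t>0$ and every $0<s<1$. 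The split of \eqref{potential} at $\sigma=t$ is then immediate, and neither \eqref{Jeps} nor the dispersive bound \eqref{dispps} is needed. For $s\ge\frac12$ the resulting $u$ lies in $L^\infty_tL^2_x$ but not in $L^2(\R^{d+1})$. As in the paper, \eqref{potential} itself is then what gives ``solution'' its meaning.

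Finally, your remark about $t\le0$ is off. There the whole range $\sigma\in(0,\infty)$ falls in the history region, so the formula is $\G(s)^{-1}\int_0^\infty\sigma^{s-1}S(\sigma)(\Le^sv_0(\cdot,t-\sigma))\,d\sigma$, not an integral over $(t,\infty)$. By causality and Theorem~\ref{T:inverse}(i) this equals $v_0(\cdot,t)$, so \eqref{pot2} is the statement for $t>0$ of the solution with prescribed past $v_0$.
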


\subsection{Poisson lifting of the fundamental solution}\label{P:liftingEs}

We next compute the Poisson extension of the fundamental solution $\mathscr E_s$ in \eqref{Es}. According to \eqref{U} such extension is provided by
\begin{equation}\label{EEs}
\mathscr  E_s(x,z,t) := \int_0^\infty \int_{\Rd} \mathbb P_a(X,Y_0,\sigma) E_s(y,t-\sigma) dy d\sigma,\ \ \ \ a=1-2s,
\end{equation}
where for brevity we have written 
\[
E_s(y,t-\sigma) = E_s(y,t;0,\sigma) = \frac{1}{\G(s)} \frac{\mathbf 1_{(0,\infty)}(t-\sigma)}{(t-\sigma)^{1-s}}  S(y,0,t-\sigma). 
\]
The kernel $E_s$ is an oscillatory distribution rather than a Schwartz function. The identity below is therefore understood in the distributional sense (equivalently, after Fourier transformation in the tangential variables). 

\begin{proposition}\label{P:Es}
For every $0<s<\min\{1,d/2\}$, $t>0$ and $z>0$, we have
\[
\mathscr  E_s(x,z,t) = \frac{1}{\G(s)} \frac{e^{i \frac{z^2}{4t}}}{t^{1-s}} S(x,0,t) = e^{i \frac{z^2}{4t}} \ E_s(x,t).
\]
\end{proposition}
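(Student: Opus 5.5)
The plan is to compute \eqref{EEs} by tensorization. The Poisson kernel splits as $\mathbb P_a(X,Y_0,\sigma)=P_a(z,\sigma)S(x,y,\sigma)$, and $E_s(y,t-\sigma)$ splits into the time factor $\frac{1}{\G(s)}\mathbf 1_{(0,\infty)}(t-\sigma)(t-\sigma)^{s-1}$ times $S(y,0,t-\sigma)$. So the $y$-integration acts only on the Schr\"odinger kernels, and the $\sigma$-integration then reduces to the Abel-type identity of Lemma \ref{L:abepoi}.

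\emph{Step 1: the $y$-integration.} For fixed $0<\sigma<t$, the group law of $e^{it\Delta_x}$ gives
\[
\int_{\Rd}S(x,y,\sigma)S(y,0,t-\sigma)\,dy=S(x,0,t).
\]
To make this rigorous despite the non-integrable oscillatory kernels, I will pair with a test function $\phi\in\mathscr S(\Rd_x)$, or equivalently take the Fourier transform in $x$. On that side $\widehat{S(\cdot,0,r)}(\xi)=e^{-4\pi^2 ir|\xi|^2}$, and the product of the two phases gives $e^{-4\pi^2 it|\xi|^2}$, independent of $\sigma$. This is the distributional reading announced before the statement. The factor $\mathbf 1_{(0,\infty)}(t-\sigma)$ truncates the $\sigma$-range to $(0,t)$.

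\emph{Step 2: the $\sigma$-integration.} After Step 1,
\[
\mathscr E_s(x,z,t)=S(x,0,t)\cdot\frac{1}{\G(s)}\int_0^\infty\frac{\mathbf 1_{(0,\infty)}(t-\sigma)}{(t-\sigma)^{1-s}}P_a(z,\sigma)\,d\sigma .
\]
By Lemma \ref{L:abepoi}, the scalar integral equals $\frac{1}{\G(s)}t^{s-1}e^{i\frac{z^2}{4t}}$. This gives the first claimed formula. The second follows by comparison with \eqref{Es} at $y=0$, $\tau=0$.

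\emph{Main obstacle.} The hardest step is justifying the exchange of the $y$- and $\sigma$-integrals, since neither integral is absolutely convergent in $y$. I will handle it by working in the tangential Fourier variable $\xi$. There the integrand is
\[
P_a(z,\sigma)(t-\sigma)^{s-1}e^{-4\pi^2 it|\xi|^2}\mathbf 1_{(0,t)}(\sigma),
\]
which is absolutely integrable in $\sigma$ for fixed $z>0$: $P_a(z,\sigma)$ is bounded as $\sigma\to0^+$ because $z^{1-a}\sigma^{-(3-a)/2}$ is multiplied by the unimodular factor $e^{iz^2/4\sigma}$ and the Abel integral converges there, and $(t-\sigma)^{s-1}$ is integrable at $\sigma=t$. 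So Fubini applies against $\hat\phi$, and inverting the $x$-Fourier transform recovers $S(x,0,t)$ in $\mathscr S'(\Rd)$.

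If one prefers the full space-time distributional sense, I will insert the cutoff $\chi_R$ from the proof of Theorem \ref{T:invFT} and pass to the limit as there. The endpoint singularity at $\sigma\to0^+$ is harmless, because $|e^{iz^2/4\sigma}|=1$ and $\sigma^{-1-s}$ is integrable near zero only after the oscillation is used. For that reason I will rely on Lemma \ref{L:abepoi} as stated, which already encodes this oscillatory evaluation.
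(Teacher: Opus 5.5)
Your route is the paper's own. You tensorize $\mathbb P_a$ and $E_s$, collapse the $y$-integral with the Chapman--Kolmogorov identity $\int_{\Rd}S(x,y,\sigma)S(y,0,t-\sigma)\,dy=S(x,0,t)$, and evaluate the remaining scalar $\sigma$-integral by Lemma \ref{L:abepoi}. You rightly read the $y$-step on the tangential Fourier side, where the two phases combine to $e^{-4\pi^2 it|\xi|^2}$. The formula you arrive at is correct.

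The step that does not hold as written is your justification of the interchange of integrals. $P_a(z,\sigma)$ is not bounded as $\sigma\to0^+$: for $a=1-2s$,
\[
|P_a(z,\sigma)|=\frac{z^{2s}}{4^s\G(s)}\,\sigma^{-1-s},
\]
which is not integrable at $\sigma=0$. So the $\sigma$-integrand is not absolutely integrable on $(0,t)$, and Fubini cannot be applied on that whole interval. Your final paragraph in effect concedes this, which contradicts your earlier claim. The $\sigma$-integral in \eqref{EEs} exists only as an improper integral at $0^+$, and that is exactly how Lemma \ref{L:Abel}, and hence Lemma \ref{L:abepoi}, treats it.

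The fix is simple. On the Fourier side the integrand is $e^{-4\pi^2it|\xi|^2}\hat\phi(\xi)$ times a function of $\sigma$ alone. It is therefore absolutely integrable on $\Rd\times(\ve,t)$ for every $\ve>0$. Apply Fubini there, where the pairing factors, and then let $\ve\to0^+$; this produces precisely the limit evaluated in Lemma \ref{L:Abel}.

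Your $\chi_R$ alternative also works, since $\chi_R$ vanishes near $\sigma=0$. You would then need to check that the smooth truncation converges to the same improper limit; integrating by parts against $\ve\mapsto\int_\ve^t$ does it.
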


\begin{proof}
Keeping \eqref{Pa} in mind, we have
\[
\mathbb P_a(X,Y_0,\sigma)\ E_s(y,t-\sigma) = \frac{1}{\G(s)} P_a(z,\sigma) \frac{\mathbf 1_{(0,\infty)}(t-\sigma)}{(t-\sigma)^{1-s}}  S(x,y,\sigma)  S(y,0,t-\sigma),
\]
and we thus have from \eqref{EEs}
\begin{align*}
\mathscr  E_s(x,z,t) & = \frac{1}{\G(s)}  \int_0^\infty P_a(z,\sigma) \frac{\mathbf 1_{(0,\infty)}(t-\sigma)}{(t-\sigma)^{1-s}} \left(\int_{\Rd}  S(x,y,\sigma)  S(y,0,t-\sigma) dy\right) d\sigma
\\
& = \frac{1}{\G(s)}  \int_0^\infty P_a(z,\sigma) \frac{\mathbf 1_{(0,\infty)}(t-\sigma)}{(t-\sigma)^{1-s}} d\sigma\ S(x,0,t),
\end{align*}
where we have used the Chapman-Kolmogorov equation
\[
\int_{\Rd}  S(x,y,\sigma)  S(y,0,t-\sigma) dy = S(x,0,t).
\]
We now invoke Lemma \ref{L:abepoi} to conclude that 
\begin{equation}\label{abepoi-repeat}
\frac{1}{\G(s)} \int_0^\infty \frac{\mathbf 1_{(0,\infty)}(t-\sigma)}{(t-\sigma)^{1-s}} P_a(z,\sigma) d\sigma = \frac{1}{\G(s)} \frac{e^{i \frac{z^2}{4t}}}{t^{1-s}}.
\end{equation}
From the latter three identities the conclusion follows.

\end{proof}

Proposition \ref{P:Es} shows that the Poisson lifting of the fundamental solution is obtained simply by multiplication by the oscillatory phase $e^{i \frac{z^2}{4t}}$. 

This remarkable simplification will play a crucial role in identifying
the kernel of the boundary operator $\Theta_a^\star$ introduced in
\cite{GS2}. We recall that in \cite{GS2} we established a generalized
Duhamel representation for solutions to the problem:
\begin{equation}\label{nozeroin}
\begin{cases}
\partial_tU-i(\Delta_x U+\Ba U)=F(X,t),\\
\displaystyle \lim_{z\to0^+} z^a\partial_zU(X,t)=\Phi(x,t),\\
U(X,0)=U_0(X),
\end{cases}
\end{equation}
where $\Ba$ is given by \eqref{Ba} and $-1<a<1$. Specifically, we have proved that solutions can be represented as
\[
U= \mathbb T_a^\star(U_0)+\mathbb D_a(F)+\Theta_a^\star(\Phi),
\]
where the operators $\mathbb T_a^\star$ and $\mathbb D_a$ describe the bulk evolution, while $\Theta_a^\star$ encodes the boundary interaction. Since the bulk propagator plays an essential role in Section \ref{S:memory} below, we recall its definition explicitly. With $\mathbb S_a$ the kernel \eqref{grandeSaS0} and $d\omega_a(Y) = \zeta^a dy\, d\zeta$, one has
\begin{equation}\label{E:Tstar}
\mathbb T_a^\star(U_0)(X,t) := \int_{\RN} \mathbb S_a(X,Y,t)\, U_0(Y)\, d\omega_a(Y),
\end{equation}
\begin{equation}\label{E:Da}
\mathbb D_a(F)(X,t) := \int_0^t\!\!\int_{\RN} \mathbb S_a(X,Y,t-\tau)\, F(Y,\tau)\, d\omega_a(Y)\, d\tau .
\end{equation}

By the tensorization \eqref{Sa} of the kernel, $\mathbb T^\star_a$ is the unitary group
\begin{equation}\label{E:Tstar-group}
\mathbb T^\star_a(\cdot)(\cdot,\cdot,t) = e^{i t\left(\Delta_x + \BaN\right)}
\qquad\text{on } L^2_a(\RN),
\end{equation}
where $\BaN$ denotes the following realization of the Bessel operator. Both
end-points of $\R^+$ are singular for $\Ba$; by Frobenius, the two solutions of
$\Ba u=0$ are $u_1\equiv1$ and $u_2(z)=z^{1-a}$, and both belong to
$L^2((0,\ve),z^adz)$ when $-1<a<3$. Thus $z=0$ is in the limit-circle case in
the whole range $-1<a<1$ relevant here, so that $\Ba$ is \emph{not} essentially
self-adjoint and a boundary condition must be imposed; the point at infinity is
always limit-point. Imposing the homogeneous Neumann condition
\begin{equation}\label{E:neumann}
\underset{z\to0^+}{\lim}\ z^a\p_z u = 0,
\end{equation}
which eliminates $u_2$, singles out a self-adjoint realization, and it is this
realization that we denote by $\BaN$; see \cite[(1.9)--(1.11)]{GS1}, where the
Weyl analysis is carried out, and where it is also shown that $e^{it\BaN}$ is
the unitary group whose kernel is $S_a$ of \eqref{SaS}. Identity
\eqref{Sazeron} is precisely the statement that this group respects
\eqref{E:neumann}. We shall use the diagonalization of $\BaN$ furnished by the
modified Hankel transform of order $\nu=\frac{a-1}2$ on $(\R^+,z^adz)$,
\begin{equation}\label{E:hankel}
\mathcal H_{\nu}(\psi)(\lambda)
= \int_0^\infty (\lambda z)^{-\nu}J_{\nu}(\lambda z)\,\psi(z)\,z^a\,dz ,
\end{equation}
see \cite[(2.3)]{GS1}. By \cite[Theorem~2.1]{GS1} the transform $\mathcal
H_\nu$ is an involution, $\mathcal H_\nu\circ\mathcal H_\nu=\operatorname{Id}$,
and by \cite[(2.7)]{GS1} it satisfies the Plancherel identity on
$L^2(\R^+,z^adz)$; it is therefore a surjective isometry of that space onto
itself. Moreover, by \cite[Theorem~2.2]{GS1},
\begin{equation}\label{E:hankeldiag}
\mathcal H_{\nu}\bigl(\BaN\psi\bigr)(\lambda)=-\lambda^2\,\mathcal H_{\nu}(\psi)(\lambda)
\end{equation}
for every $\psi$ in the space $\mathscr S^+$ introduced in \cite[Section~1]{GS1},
namely the space of the $\vf\in C^\infty(\R^+)$ for which all the seminorms
\[
\gamma_{m,k}(\vf)=\sup_{z>0}\bigl|z^m(\tfrac1z\p_z)^k\vf(z)\bigr|
\]
are finite,
equivalently the restrictions to $(0,\infty)$ of the even Schwartz functions on
$\R$. As observed in \cite{GS1}, the finiteness of $\gamma_{0,1}$ alone forces
$|\p_z\vf(z)|\le\gamma_{0,1}(\vf)\,z$, so that every element of $\mathscr S^+$
automatically satisfies \eqref{E:neumann} when $a>-1$; in particular
$\mathscr S^+\subset\Dom(\BaN)$, and $\mathcal H_\nu$ diagonalizes precisely
the realization selected by that condition. Writing
\[
\widehat{v}(\xi) = \int_{\Rd} e^{-2\pi i \sa \xi,x\da} v(x)\, dx
\]
for the Fourier transform in the tangential variable alone, it follows from \eqref{E:hankeldiag} and from the classical propagator \eqref{Scla} that, on the Fourier--Hankel side, $\mathbb T^\star_a$ acts as multiplication by the unimodular symbol
\begin{equation}\label{E:multiplier}
e^{-i t\left(4\pi^2|\xi|^2+\lambda^2\right)} .
\end{equation}

Our next objective is to identify the kernel of the boundary operator $\Theta_a^\star$ in terms of the Poisson lifting of the fundamental solution constructed above.

With the Neumann convention of \cite{GS2}, see \cite[(2.5)]{GS2} and the verification in \cite[Remark~2.1]{GS2}, the boundary Duhamel operator carries the factor $-i$. We therefore use
\begin{equation}\label{Theta}
\Theta_a^\star(\Phi)(x,z,t) = -i \int_0^{t} S_a(z,0,\sigma) S(\sigma)(\Phi(\cdot,t-\sigma))(x)  d\sigma.
\end{equation}

\begin{theorem}\label{T:id}
Let $\Theta^\star_a(X,Y_0,\sigma)$ denote the integral kernel of the boundary operator $\Theta_a^\star$ in \eqref{Theta}. Then 
\[
\Theta^\star_a(X,Y_0,t) = -\frac{2^{2s-1}e^{i\frac{\pi s}2}\G(s)}{\G(1-s)}\ \mathscr E_s(X,Y_0,t),
\]
where $\mathscr E_s(X,Y_0,t)$ is the Poisson lifting of the fundamental solution $ E_s(x,y,t)$ of $\Le^s$.
\end{theorem}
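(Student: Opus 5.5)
The plan is a direct comparison of the two kernels: compute both explicitly and check that they differ by the stated constant. The work splits into three steps: read off the kernel of $\Theta_a^\star$ from \eqref{Theta}, evaluate the boundary value $S_a(z,0,\sigma)$ in closed form, and compare with the translated form of Proposition \ref{P:Es}.

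\emph{Kernel of $\Theta_a^\star$.} Writing $S(\sigma)(\Phi(\cdot,t-\sigma))(x)=\int_{\Rd}S(x,y,\sigma)\Phi(y,t-\sigma)\,dy$ in \eqref{Theta}, the operator has the same convolution structure as the Poisson representation \eqref{U2}. Its kernel, in the variables $(X,Y_0,\sigma)$, is
\[
\Theta_a^\star(X,Y_0,\sigma)=-i\,S_a(z,0,\sigma)\,S(x,y,\sigma)\,\mathbf 1_{(0,\infty)}(\sigma),
\]
and the truncation $\sigma<t$ comes from $\Phi$ being supported in positive times. The lifted fundamental solution with pole at $Y_0$, namely $\mathscr E_s(X,Y_0,t)$, is obtained from Proposition \ref{P:Es} by tangential translation invariance: replace $S(x,0,t)$ by $S(x,y,t)$. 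This gives
\[
\mathscr E_s(X,Y_0,t)=\frac{1}{\G(s)}\,t^{s-1}e^{i\frac{z^2}{4t}}\,S(x,y,t)\,\mathbf 1_{(0,\infty)}(t).
\]
It therefore suffices to compute $S_a(z,0,t)$ for $t>0$.

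\emph{Evaluating $S_a(z,0,t)$.} This is the analogue of Lemma \ref{L:S-a} with $a$ in place of $-a$. In \eqref{SaS} the Bessel factor is $x^{\frac{1-a}{2}}J_{\frac{a-1}{2}}(x)$ with $x=\frac{z\zeta}{2t}$. With $a=1-2s$ this is $x^{s}J_{-s}(x)$, which tends to $2^{s}/\G(1-s)$ as $\zeta\to0^+$. The prefactor is $e^{-i\frac{(a+1)\pi}{4}}(2t)^{-\frac{a+1}{2}}=e^{-i\frac{\pi}{2}}e^{i\frac{\pi s}{2}}(2t)^{s-1}$. Hence
\[
S_a(z,0,t)=-i\,e^{i\frac{\pi s}{2}}\,\frac{2^{2s-1}}{\G(1-s)}\,t^{s-1}e^{i\frac{z^2}{4t}},\qquad t>0.
\]

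\emph{Comparison.} Multiplying by $-i$, and using $(-i)(-i)=-1$, gives
\[
\Theta_a^\star(X,Y_0,t)=-\frac{2^{2s-1}e^{i\frac{\pi s}{2}}}{\G(1-s)}\,t^{s-1}e^{i\frac{z^2}{4t}}\,S(x,y,t)\,\mathbf 1_{(0,\infty)}(t).
\]
Dividing by $\mathscr E_s(X,Y_0,t)$, the common factor $t^{s-1}e^{i\frac{z^2}{4t}}S(x,y,t)$ cancels and leaves the constant $-\frac{2^{2s-1}e^{i\frac{\pi s}{2}}\G(s)}{\G(1-s)}$, which is exactly the claim.

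The main obstacle is not the algebra but the bookkeeping of conventions. Three things must be consistent: the branch of $e^{-i\operatorname{sgn}(t)\frac{(a+1)\pi}{4}}$ for $t>0$, the factor $-i$ in \eqref{Theta}, and the sense in which $\mathscr E_s$ is a kernel, since Proposition \ref{P:Es} holds distributionally. I would state the identity as an equality of kernels for $t>0$ and $z>0$, where both sides are smooth functions. The requirement $0<s<\min\{1,d/2\}$ is needed only so that $\mathscr E_s$ is defined through Proposition \ref{P:Es}.
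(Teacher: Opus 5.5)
Your proposal is correct and follows the paper's proof: substitute the closed form of $S_a(z,0,\sigma)$ into \eqref{Theta}, use $(-i)(-i)=-1$ to get the factor $-\frac{2^{2s-1}e^{i\pi s/2}}{\G(1-s)}$, and compare with the translated form of Proposition \ref{P:Es}. The only difference is that you derive $S_a(z,0,t)=\frac{2^{2s-1}}{\G(1-s)}e^{-i\frac{(1-s)\pi}{2}}t^{s-1}e^{i\frac{z^2}{4t}}$ yourself from \eqref{SaS}, via the small-argument limit of $x^sJ_{-s}(x)$, whereas the paper quotes this formula from \cite[(3.9)]{GS2}; this makes your version slightly more self-contained.
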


\begin{proof}

Moreover, from \cite[(3.9)]{GS2}, for $a=1-2s$ we have:
\begin{equation}\label{SaS0}
S_a(z,0,\sigma)
= \frac{2^{2s-1}}{\G(1-s)}
\frac{e^{-i \frac{(1-s)\pi}{2}}}{\sigma^{1-s}}
\,e^{i \frac{z^2}{4\sigma}}.
\end{equation}

Substituting \eqref{SaS0} in \eqref{Theta}, we obtain
\begin{equation}\label{Theta2}
\Theta_a^\star(\Phi)(x,z,t) = -\frac{2^{2s-1}e^{i\frac{\pi s}2}}{\G(1-s)} \int_0^{t} \frac{e^{i\frac{z^2}{4\sigma}}}{\sigma^{1-s}} S(\sigma)(\Phi(\cdot,t-\sigma))(x)  d\sigma.
\end{equation}

The sign is also consistent with the normalization used later in the nonlinear problem. Indeed, with
\[
c_s=-\frac{2^{1-2s}\G(1-s)}{\G(s)e^{i\pi s/2}},
\]
one has
\[
c_s\left(-\frac{2^{2s-1}e^{i\pi s/2}\G(s)}{\G(1-s)}\right)=1.
\]

Comparing \eqref{Theta2} with Proposition \ref{P:Es} and with the first term in the representation  \eqref{pot2}, we conclude that the kernel of the operator $\Theta_a^\star$ is a multiple of the Poisson extension of the fundamental solution of $\Le^s$. 

\end{proof}


\section{The natural history space}\label{S:history}

As we have mentioned in Section \ref{S:des}, the oscillatory Poisson lifting \eqref{lift} provides the initial datum for the singular Schr\"odinger equation in the upper half-space. Consequently, in order to apply the nonlinear well-posedness theory developed in our companion paper \cite{GS2}, it is necessary to determine which histories $u_0$ produce liftings $U_0$ belonging to the weighted energy space $L^2_a(\RN)$, see Section \ref{S:not}. We call the class of such $u_0$ the \emph{natural history space}.

The next question is whether this class can be characterized directly in terms of the prescribed past evolution.
We shall prove that the weighted
bulk energy of the Poisson lifting can be expressed by an explicit quadratic
form acting only on the history itself. This identity suggests the
introduction of an intrinsic Hilbert space of Schr\"odinger histories.
We begin with a basic preparatory result which shows that the class $\Sm$, defined in Section \ref{S:not}, is contained in the natural history space.

\begin{lemma}\label{L:U0L2}
Let $0<s<1$ and set $a=1-2s\in(-1,1)$. Assume that
$u_0\in \Sm$ and define
\begin{equation}\label{lift2}
U_0(X) := \int_0^\infty\int_{\R^d}
\mathbb P_a(X,Y_0,\sigma)\,u_0(y,-\sigma)\,dy\,d\sigma.
\end{equation}
Then $U_0\in L_a^2(\R^{d+1}_+)$.
\end{lemma}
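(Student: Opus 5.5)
The plan is to pass to the Fourier transform in the tangential variable $x$ only and reduce everything to a one-dimensional oscillatory integral in $\sigma$, estimated pointwise in $(\xi,z)$. By the tensorization $\mathbb P_a(X,Y_0,\sigma)=P_a(z,\sigma)S(x,y,\sigma)$ of Definition \ref{D:Pa}, and since $S(\sigma)$ acts on the $x$-Fourier side as multiplication by $e^{-4\pi^2 i\sigma|\xi|^2}$, we get
\[
\widehat{U_0}(\xi,z)=C_s\, z^{2s}\int_0^\infty \sigma^{-1-s}e^{i\frac{z^2}{4\sigma}}\,g_\xi(\sigma)\,d\sigma,\qquad g_\xi(\sigma):=e^{-4\pi^2 i\sigma|\xi|^2}\,\widehat{u_0}(\xi,-\sigma),
\]
where $\widehat{u_0}(\xi,-\sigma)$ is the partial Fourier transform in $x$ and $C_s$ is the constant in \eqref{Poi}. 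Since $u_0\in\Sm$, for every $N$ we have
\[
|g_\xi(\sigma)|+|g_\xi'(\sigma)|\le C_N(1+|\xi|)^{-N}(1+\sigma)^{-N}.
\]
Here the derivative produces at most a factor $|\xi|^2$, which is absorbed by the rapid decay in $\xi$. By Plancherel in $x$ it then suffices to show $\int_{\Rd}\int_0^\infty|\widehat{U_0}(\xi,z)|^2z^a\,dz\,d\xi<\infty$.

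Substitute $\sigma=z^2r$, which gives
\[
\widehat{U_0}(\xi,z)=C_s\int_0^\infty r^{-1-s}e^{\frac{i}{4r}}g_\xi(z^2r)\,dr .
\]
Split the integral at $r=1$.

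\emph{The piece $r>1$.} This piece converges absolutely. Its modulus is at most $\int_1^\infty r^{-1-s}|g_\xi(z^2r)|\,dr$. This is bounded by a constant for all $z$, and by $C_N(1+|\xi|)^{-N}z^{-2N}$ for $z\ge1$.

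\emph{The piece $r<1$.} This is the main obstacle. The weight $r^{-1-s}$ is not integrable at $r=0$, so the integral converges only thanks to the oscillation of $e^{i/(4r)}$ (equivalently, $P_a(z,\cdot)$ is not absolutely integrable near $\sigma=0$). I would integrate by parts using the identity
\[
r^{-1-s}e^{\frac{i}{4r}}=4i\,r^{1-s}\,\frac{d}{dr}e^{\frac{i}{4r}}.
\]
The boundary term at $r=0$ vanishes because $r^{1-s}\to0$. The boundary term at $r=1$ is $4i\,e^{i/4}g_\xi(z^2)$, which decays rapidly in both $z$ and $\xi$. What remains is $-4i\int_0^1 e^{\frac{i}{4r}}\bigl[(1-s)r^{-s}g_\xi(z^2r)+z^2r^{1-s}g_\xi'(z^2r)\bigr]dr$, which is absolutely convergent and uniformly bounded for $z\le1$. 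For $z\ge1$, undo the scaling ($\sigma=z^2r$). The two terms become $z^{2s-2}\int_0^{z^2}\sigma^{-s}|g_\xi(\sigma)|\,d\sigma$ and $z^{2s-2}\int_0^{z^2}\sigma^{1-s}|g_\xi'(\sigma)|\,d\sigma$, and both are $\le C_N(1+|\xi|)^{-N}z^{2s-2}$.

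Collecting the bounds gives
\[
|\widehat{U_0}(\xi,z)|\le C_N(1+|\xi|)^{-N}\min\{1,z^{2s-2}\}.
\]
Hence
\[
\int_0^\infty|\widehat{U_0}|^2z^a\,dz\le C(1+|\xi|)^{-2N}\Bigl(\int_0^1z^a\,dz+\int_1^\infty z^{1-2s+4s-4}\,dz\Bigr).
\]
The first integral is finite because $a>-1$, and the second because $2s-3<-1$. Integrating in $\xi$ and applying Plancherel yields $U_0\in L^2_a(\RN)$.
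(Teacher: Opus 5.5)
Your proof is correct and follows essentially the same route as the paper. Both arguments exploit the oscillation of $e^{iz^2/(4\sigma)}$ near $\sigma=0$ through a single integration by parts, obtaining a uniform bound for $z\le1$ and the decay $z^{2s-2}$ for $z\ge1$, and then conclude from $a>-1$ and $2s-3<-1$. The differences are cosmetic. The paper works with $L^2_x$-valued integrals via the unitarity of $S(\sigma)$ and uses separate substitutions for small and large $z$, whereas you work pointwise on the $x$-Fourier side and handle both regimes at once through the scaling $\sigma=z^2r$.
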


\begin{proof}
We want to show that
\begin{equation}\label{goal}
\int_0^\infty\int_{\Rd}|U_0(x,z)|^2 z^a\,dx\,dz<\infty.
\end{equation}
 Let $v_0\in \mathscr S(\RNu)$ be such that $v_0|_{\{t\le 0\}} = u_0$. Write
\[
G(\sigma)(x):=S(\sigma)v_0(\cdot,-\sigma)(x),
\qquad \sigma>0,
\]
so that, by the tensorization property of the Poisson kernel \eqref{Pa},
\[
U_0(X)
=
c_a\,z^{2s}\int_0^\infty \sigma^{-s}
e^{\,i\frac{z^2}{4\sigma}}\,G(\sigma)(x)\frac{d\sigma}{\sigma},
\]
for a constant $c_a\neq 0$ depending only on $a$.
Since $v_0\in\mathscr S(\RNu)$ and $S(\sigma)$ is unitary on $L^2(\Rd)$, we have $\|G(\sigma)\|_{L^2_x}=\|v_0(\cdot,-\sigma)\|_{L^2_x}$ and $\|G'(\sigma)\|_{L^2_x}=\|(\Le v_0)(\cdot,-\sigma)\|_{L^2_x}$, both rapidly decreasing in $\sigma$, and therefore
$\sigma\mapsto G(\sigma)\in L^2(\Rd)$ is smooth and rapidly decaying together
with all its derivatives. In particular, for every $N\ge 0$ there exists
$C_N>0$ such that
\[
\|G(\sigma)\|_{L^2_x}+\|G'(\sigma)\|_{L^2_x}
\le C_N(1+\sigma)^{-N}
\qquad\forall\,\sigma>0.
\]
Indeed,
\begin{equation}\label{Gprime}
G'(\sigma)
=
\partial_\sigma\bigl(S(\sigma)v_0(\cdot,-\sigma)\bigr)
=
i\Delta_xS(\sigma)v_0(\cdot,-\sigma)
-
S(\sigma)\partial_t v_0(\cdot,-\sigma)
=
-\,S(\sigma)\bigl(\Le v_0\bigr)(\cdot,-\sigma),
\end{equation}
and the claim follows from the unitarity of
$S(\sigma)$ on $L^2(\R^d)$ and the Schwartz decay of $v_0$.
To reach the desired conclusion, we estimate $\|U_0(\cdot,z)\|_{L^2_x}$ separately for $0<z\le 1$ and $z\ge 1$.

\medskip
\noindent \underline{Step 1}: $0<z\le 1$.
Making the change of variables
$u=\frac{z^2}{4\sigma}$,
we obtain
\begin{equation}\label{split}
U_0(x,z)
=
c_a\,z^{2s} \int_0^\infty \left(\frac{4u}{z^2}\right)^s e^{iu}\  
G\left(\frac{z^2}{4u}\right)(x) \frac{du}{u} = \tilde c_a \int_0^\infty u^{s-1} e^{iu}\ G\left(\frac{z^2}{4u}\right)(x) du,
\end{equation}
for another harmless constant $\widetilde c_a\neq 0$ depending only on $a$.
Set
\[
H_z(u)(x):=G\!\left(\frac{z^2}{4u}\right)(x).
\]
Then
\[
\|H_z(u)\|_{L^2_x}\le \sup_{\sigma>0}\|G(\sigma)\|_{L^2_x},
\]
and, for $0<z\le 1$, we have from \eqref{Gprime}
\begin{equation}\label{Hprime}
\|H_z'(u)\|_{L^2_x}
=
\frac{z^2}{4u^2}\,
\left\|G'\!\left(\frac{z^2}{4u}\right)\right\|_{L^2_x}
\le \frac{C}{u^2},
\end{equation}
with $C$ independent of $z\in(0,1]$.
We now split the integral in \eqref{split} into $(0,1)$ and $(1,\infty)$.
On $(0,1)$ we simply use the bound on $H_z$ and the fact that $s\in(0,1)$:
\[
\left\|\int_0^1 u^{s-1}e^{iu}H_z(u)\,du\right\|_{L^2_x}
\le
\sup_{\sigma>0}\|G(\sigma)\|_{L^2_x}\int_0^1 u^{s-1}\,du
<\infty.
\]
On the interval $(1,R)$, where $R>1$, we integrate by parts using
$\partial_u e^{iu}=ie^{iu}$:
\[
\begin{aligned}
\int_1^R u^{s-1}e^{iu}H_z(u)\,du
&=
\left[\frac{1}{i}u^{s-1}e^{iu}H_z(u)\right]_1^R  \\
&\qquad
-\frac1i
\int_1^R
e^{iu}
\Bigl((s-1)u^{s-2}H_z(u)
+
u^{s-1}H_z'(u)
\Bigr)\,du .
\end{aligned}
\]
Since $0<s<1$ and
\[
\sup_{u>0}\|H_z(u)\|_{L^2_x}
\le
\sup_{\sigma>0}\|G(\sigma)\|_{L^2_x},
\]
we have
\[
R^{\,s-1}\|H_z(R)\|_{L^2_x}
\longrightarrow0
\qquad\text{as }R\to\infty.
\]
Hence the boundary term at infinity vanishes in $L^2_x$, and letting
$R\to\infty$ yields
\[
\begin{aligned}
\int_1^\infty u^{s-1}e^{iu}H_z(u)\,du
&=
-\frac{e^{i}}{i}H_z(1)
\\
&\qquad
-\frac1i
\int_1^\infty
e^{iu}
\Bigl((s-1)u^{s-2}H_z(u)
+
u^{s-1}H_z'(u)
\Bigr)\,du .
\end{aligned}
\]
The first term $H_z(1)(x) = G\!\left(\frac{z^2}{4}\right)(x)$ is bounded in $L^2_x$, whereas the second one is bounded
because
\[
u^{s-2}\in L^1(1,\infty),
\]
and from \eqref{Hprime} we have
\[
u^{s-1}\|H_z'(u)\|_{L^2_x}
\le
Cu^{s-3},
\]
with
\[
u^{s-3}\in L^1(1,\infty),
\]
since $0<s<1$.
Therefore,
\[
\sup_{0<z\le 1}\|U_0(\cdot,z)\|_{L^2_x}\le C.
\]
It follows that
\[
\int_0^1 z^a\|U_0(\cdot,z)\|_{L^2_x}^2\,dz
\le
C\int_0^1 z^a\,dz<\infty,
\]
since $a>-1$.

\medskip
\noindent \underline{Step 2}: $z\ge 1$.
Starting from the representation
\[
U_0(x,z)
=
c_a\,z^{2s}\int_0^\infty \sigma^{-1-s}
e^{\,i\frac{z^2}{4\sigma}}\,G(\sigma)(x)\,d\sigma,
\]
we integrate by parts in $\sigma$, using
\[
\partial_\sigma e^{\,i\frac{z^2}{4\sigma}}
=
-\frac{i z^2}{4\sigma^2}e^{\,i\frac{z^2}{4\sigma}}.
\]
Since $\sigma^{1-s}G(\sigma)\to 0$ as $\sigma\to 0^+$ and as $\sigma\to\infty$
(the first limit because $s<1$ and $G$ is bounded near $0$, the second by the
rapid decay of $G$), we obtain
\[
U_0(x,z)
=
\frac{4c_a}{iz^2}\,z^{2s}
\int_0^\infty
\partial_\sigma\!\bigl(\sigma^{1-s}G(\sigma)\bigr)(x)\,
e^{\,i\frac{z^2}{4\sigma}}\,d\sigma.
\]
Hence
\[
\|U_0(\cdot,z)\|_{L^2_x}
\le
C z^{2s-2}
\int_0^\infty
\left\|\partial_\sigma\!\bigl(\sigma^{1-s}G(\sigma)\bigr)\right\|_{L^2_x}\,d\sigma.
\]
Now
\[
\partial_\sigma\!\bigl(\sigma^{1-s}G(\sigma)\bigr)
=
(1-s)\sigma^{-s}G(\sigma)+\sigma^{1-s}G'(\sigma),
\]
and the right-hand side is integrable in $\sigma$ with values in $L^2_x$
because $s\in(0,1)$ and $G,G'$ are rapidly decaying. Therefore
\[
\|U_0(\cdot,z)\|_{L^2_x}\le C z^{2s-2},
\qquad z\ge 1.
\]
Consequently,
\[
\int_1^\infty z^a\|U_0(\cdot,z)\|_{L^2_x}^2\,dz
\le
C\int_1^\infty z^{a+4s-4}\,dz
=
C\int_1^\infty z^{2s-3}\,dz<\infty,
\]
since $0<s<1$.
Combining the estimates on $(0,1)$ and $(1,\infty)$, we conclude that \eqref{goal} does hold. This proves the lemma.

\end{proof}

The direct computation of the history energy is most conveniently organized
in the frequency variable. We therefore do not use the Gaussian-regularized
pairing of two Poisson kernels in the proof of the history-energy identity.

Throughout what follows we write, for $u_0\in\Sm$ and $\xi\in\Rd$,
\begin{equation}\label{E:Gxi}
g_\xi(\sigma):=e^{-4\pi^2i\sigma|\xi|^2}\,\widehat{u_0}(\xi,-\sigma)\,
\mathbf 1_{(0,\infty)}(\sigma),
\qquad
\mathscr G_\xi(\omega):=\int_{\R}e^{-i\omega\sigma}g_\xi(\sigma)\,d\sigma,
\end{equation}
where $\widehat{u_0}(\xi,t)=\mathscr F_{x\to\xi}\bigl(u_0(\cdot,t)\bigr)(\xi)$.
Thus $g_\xi$ is the history transported by the free Schr\"odinger flow. For
$u_0\in\Sm$, each $g_\xi$ is supported in $[0,\infty)$, has a finite right
limit at $0$, and is rapidly decreasing as $\sigma\to\infty$.

\begin{lemma}[Mellin reduction of the Poisson energy]\label{L:MellinReduction}
Let $a=1-2s$, $0<s<1$, and let $u_0\in\Sm$. Then, for a.e. $\xi$,
\begin{equation}\label{E:MellinReduction}
\int_0^\infty|\widehat U_0(\xi,z)|^2z^a\,dz
=
\frac{2^{1-2s}}{\G(s)^2}
\int_0^\infty \nu^s|\mathfrak G_\xi(\nu)|^2\,d\nu,
\end{equation}
where
\begin{equation}\label{E:mathfrakG}
\mathfrak G_\xi(\nu):=
\int_0^\infty e^{i\nu/\sigma}\sigma^{-1-s}g_\xi(\sigma)\,d\sigma.
\end{equation}
\end{lemma}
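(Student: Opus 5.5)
The identity should come from an explicit formula for $\widehat U_0(\xi,z)$ followed by the substitution $\nu=z^2/4$. As in the proof of Lemma \ref{L:U0L2}, I extend $u_0$ to $v_0\in\mathscr S(\RNu)$ and put $G(\sigma)=S(\sigma)v_0(\cdot,-\sigma)$. By the tensorization \eqref{Pa} and the explicit form \eqref{Poi} of $P_a$ with $a=1-2s$, the lifting reads
\[
U_0(\cdot,z)=c_s\,z^{2s}\int_0^\infty \sigma^{-1-s}e^{i\frac{z^2}{4\sigma}}G(\sigma)\,d\sigma,
\qquad c_s=\frac{e^{-i\frac{\pi s}{2}}}{4^s\G(s)} .
\]
Since $\widehat{S(\sigma)f}(\xi)=e^{-4\pi^2 i\sigma|\xi|^2}\widehat f(\xi)$, we have $\widehat{G(\sigma)}(\xi)=g_\xi(\sigma)$ for $\sigma>0$. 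So formally
\[
\widehat U_0(\xi,z)=c_s\,z^{2s}\,\mathfrak G_\xi\!\left(\tfrac{z^2}{4}\right).
\]

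The first and only delicate step is to justify this formula for a.e. $\xi$, since the $\sigma$-integral is only conditionally convergent at $\sigma=0^+$. I will handle it through truncation. For $0<\ve<R$, the truncated integral over $(\ve,R)$ is an absolutely convergent $L^2_x$-valued Bochner integral. The Fourier transform in $x$ is a bounded operator on $L^2$, so it commutes with this integral and produces the truncated $\mathfrak G_\xi$ integral, multiplied by $c_sz^{2s}$.

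As $\ve\to0^+$ and $R\to\infty$, the truncated $L^2_x$-integrals converge in $L^2_x$ to $U_0(\cdot,z)$. This is exactly the integration-by-parts argument of Lemma \ref{L:U0L2}: after the change of variables $u=z^2/4\sigma$, the behaviour at $\sigma\to0$ becomes the behaviour at $u\to\infty$, controlled by $e^{iu}$ and the bound \eqref{Hprime}. On the Fourier side, each $g_\xi$ is smooth on $(0,\infty)$, has a finite limit at $0$, decays rapidly, and has rapidly decaying derivative. The same integration by parts therefore shows that the improper integral \eqref{E:mathfrakG} converges for every $\xi$ and every $\nu>0$.

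Convergence in $L^2_\xi$ yields a.e. convergence along a subsequence. Hence the formula above holds for a.e. $\xi$, for each fixed $z$. A Fubini argument in $(\xi,z)$, using joint measurability of both sides, then gives it for a.e. $(\xi,z)$, and so for a.e. $\xi$ and a.e. $z$.

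With the formula in hand, the rest is a change of variables, which Tonelli permits because the integrand is nonnegative (both sides may a priori be infinite). Set $\nu=z^2/4$, so that $z=2\sqrt\nu$ and $dz=\nu^{-1/2}d\nu$. Then
\[
\int_0^\infty|\widehat U_0(\xi,z)|^2z^{1-2s}dz
=|c_s|^2\int_0^\infty z^{2s+1}\bigl|\mathfrak G_\xi(\tfrac{z^2}{4})\bigr|^2dz
=|c_s|^2\,2^{2s+1}\int_0^\infty\nu^{s}|\mathfrak G_\xi(\nu)|^2d\nu .
\]
Since $|c_s|^2=4^{-2s}\G(s)^{-2}$, the constant equals $2^{1-2s}/\G(s)^2$, which is \eqref{E:MellinReduction}.

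I expect the main obstacle to be the interchange of the Fourier transform with the conditionally convergent integral at $\sigma\to0^+$. This is settled by reusing the truncation and integration-by-parts scheme of Lemma \ref{L:U0L2} verbatim on both the physical and Fourier sides.
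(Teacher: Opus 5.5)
Your proposal is correct and follows the paper's proof. Both compute $\widehat U_0(\xi,z)=c\,z^{2s}\mathfrak G_\xi(z^2/4)$ with $|c|=(4^s\G(s))^{-1}$ from the tensorized kernel, then substitute $\nu=z^2/4$ to get the constant $2^{1-2s}/\G(s)^2$. The only difference is that you justify the interchange of the $x$-Fourier transform with the conditionally convergent $\sigma$-integral, by truncation and the integration-by-parts scheme of Lemma~\ref{L:U0L2}, whereas the paper takes this step without comment.
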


\begin{proof}
Taking the Fourier transform in $x$ in the Poisson lifting gives
\[
\widehat U_0(\xi,z)
=c_a z^{2s}\int_0^\infty \sigma^{-1-s}
 e^{iz^2/(4\sigma)}g_\xi(\sigma)\,d\sigma
=c_a z^{2s}\mathfrak G_\xi\!\left(\frac{z^2}{4}\right),
\]
where $|c_a|=(4^s\G(s))^{-1}$. Hence, with $\nu=z^2/4$,
\[
\begin{aligned}
\int_0^\infty|\widehat U_0(\xi,z)|^2z^a\,dz
&=|c_a|^2\int_0^\infty z^{1+2s}
\left|\mathfrak G_\xi\!\left(\frac{z^2}{4}\right)\right|^2dz\\
&=\frac{2^{1-2s}}{\G(s)^2}
\int_0^\infty\nu^s|\mathfrak G_\xi(\nu)|^2\,d\nu.
\end{aligned}
\]
\end{proof}

For a function $h$ on $(0,\infty)$ define
\[
(Wh)(\alpha):=\alpha^{s-1}h(1/\alpha),\qquad \alpha>0,
\]
so that $W^2=\mathrm{Id}$. We also write
\[
E_\pm[h]:=\int_0^\infty\omega^s|\widehat h(\pm\omega)|^2\,d\omega,
\qquad
\widehat h(\omega):=\int_0^\infty e^{-i\omega\sigma}h(\sigma)\,d\sigma,
\]
whenever these quantities are finite, with the Fourier transforms understood as
Abel boundary values when necessary.

\begin{lemma}[Mellin transform under inversion]\label{L:MellinW}
For every $h$ for which the Mellin transforms below exist,
\[
\mathscr M(Wh)(\lambda)=\mathscr Mh(1-s-\lambda).
\]
Moreover,
\[
\mathfrak G_\xi(\nu)=\widehat{Wg_\xi}(-\nu).
\]
\end{lemma}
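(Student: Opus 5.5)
Both identities follow from the definitions by a change of variables, so the plan is to do the substitution explicitly and then check that the formal manipulation holds in the Abel sense. For the Mellin identity we use the convention $\mathscr M h(\lambda)=\int_0^\infty \alpha^{\lambda-1}h(\alpha)\,d\alpha$. Writing out $\mathscr M(Wh)(\lambda)=\int_0^\infty \alpha^{\lambda-1}\alpha^{s-1}h(1/\alpha)\,d\alpha$, we substitute $\sigma=1/\alpha$, so that $d\alpha=-\sigma^{-2}d\sigma$ and the orientation of $(0,\infty)$ is reversed. This gives
\[
\int_0^\infty \sigma^{-(\lambda+s-2)}\sigma^{-2}h(\sigma)\,d\sigma=\int_0^\infty \sigma^{(1-s-\lambda)-1}h(\sigma)\,d\sigma=\mathscr Mh(1-s-\lambda).
\]
The hypothesis that the Mellin transforms exist means that whenever one side converges absolutely on a vertical line, the other converges on the reflected line. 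The same substitution also confirms $W^2=\mathrm{Id}$, since $(W^2h)(\alpha)=\alpha^{s-1}(\alpha^{-1})^{s-1}h(\alpha)=h(\alpha)$.

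For the second identity we start from \eqref{E:mathfrakG}, $\mathfrak G_\xi(\nu)=\int_0^\infty e^{i\nu/\sigma}\sigma^{-1-s}g_\xi(\sigma)\,d\sigma$, and substitute $\alpha=1/\sigma$. Then $\sigma^{-1-s}d\sigma=-\alpha^{1+s}\alpha^{-2}d\alpha=-\alpha^{s-1}d\alpha$, and we obtain
\[
\int_0^\infty e^{i\nu\alpha}\alpha^{s-1}g_\xi(1/\alpha)\,d\alpha=\int_0^\infty e^{-i(-\nu)\alpha}(Wg_\xi)(\alpha)\,d\alpha=\widehat{Wg_\xi}(-\nu).
\]

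The only real point is convergence. For $u_0\in\Sm$, $g_\xi$ has a finite limit at $0^+$ and decays rapidly at $\infty$. Hence $(Wg_\xi)(\alpha)=\alpha^{s-1}g_\xi(1/\alpha)$ decays rapidly as $\alpha\to0^+$, but behaves like $\alpha^{s-1}g_\xi(0^+)$ as $\alpha\to\infty$. That tail is not absolutely integrable, so $\widehat{Wg_\xi}(-\nu)$ is the Abel boundary value allowed by the definition of $E_\pm$. Correspondingly, $\mathfrak G_\xi$ itself is only conditionally convergent at $\sigma\to0$, where the phase $e^{i\nu/\sigma}$ oscillates.

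To make this precise, we insert the factor $e^{-\epsilon\alpha}$, which is the same as $e^{-\epsilon/\sigma}$ on the $\sigma$ side, into both integrals. For $\epsilon>0$ everything converges absolutely, so the change of variables is exact. We then let $\epsilon\to0^+$. On the $\alpha$ side, the limit exists for $\nu>0$ by one integration by parts, exactly as in Step 1 of Lemma \ref{L:U0L2}: we need $\alpha^{s-1}\to0$ and $\partial_\alpha\bigl(\alpha^{s-1}g_\xi(1/\alpha)\bigr)=O(\alpha^{s-2})$, which holds because $g_\xi'$ is bounded near $0$. This also shows that the Abel limit agrees with the improper integral $\lim_{R\to\infty}\int_0^R$.

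We expect this justification to be the main, though mild, obstacle. The Mellin statement needs no regularization beyond the stated hypothesis that the transforms exist.
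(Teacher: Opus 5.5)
Your proof is correct and follows the paper's argument: both identities come from the single substitution $\sigma=1/\alpha$, carried out exactly as you do. The paper does the substitution formally and leaves convergence implicit, so your Abel regularization by $e^{-\epsilon\alpha}$ (equivalently $e^{-\epsilon/\sigma}$) adds rigor without changing the route. The same holds for your integration by parts, which shows that for $\nu>0$ the Abel limit coincides with the improper integral.
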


\begin{proof}
The first identity follows from $\sigma=1/\alpha$:
\[
\mathscr M(Wh)(\lambda)
=\int_0^\infty \alpha^{s-1}h(1/\alpha)\alpha^{\lambda-1}\,d\alpha
=\int_0^\infty h(\sigma)\sigma^{1-s-\lambda}\frac{d\sigma}{\sigma}.
\]
The second follows from the same substitution:
\[
\mathfrak G_\xi(\nu)
=\int_0^\infty e^{i\nu\alpha}(Wg_\xi)(\alpha)\,d\alpha
=\widehat{Wg_\xi}(-\nu).
\]
\end{proof}

\begin{lemma}[The inversion identity for the one-sided energy]\label{L:MellinSwap}
Let $0<s<1$, and let $h$ be smooth on $(0,\infty)$, have a finite right limit at $0$, and be rapidly decreasing at infinity together with all its derivatives. Then
\begin{equation}\label{E:MellinSwap}
E_-[Wh]=E_+[h].
\end{equation}
\end{lemma}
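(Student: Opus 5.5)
The plan is to pass to the Mellin side, where the inversion $W$ acts by a reflection (Lemma~\ref{L:MellinW}), and to compare the two one-sided energies through the Mellin--Plancherel theorem. Both energies will become weighted $L^2$ integrals of $\mathscr Mh$ along a vertical line, and a complex conjugation $y\mapsto -y$ will match them.

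\emph{Step 1: Mellin--Plancherel.} For a function $F$ on $(0,\infty)$ one has
\[
\int_0^\infty \omega^{s}|F(\omega)|^2\,d\omega=\frac1{2\pi}\int_{\R}\bigl|\mathscr MF(\tfrac{1+s}2+iy)\bigr|^2\,dy .
\]
I will apply this with $F(\omega)=\widehat h(\pm\omega)$, and set $\lambda=\frac{1+s}2+iy$, so that $0<\Re\lambda<1$.

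\emph{Step 2: Mellin transform of the one-sided Fourier transform.} I will show, for $0<\Re\lambda<1$, that
\[
\int_0^\infty\omega^{\lambda-1}\widehat h(\pm\omega)\,d\omega=\G(\lambda)\,e^{\mp i\pi\lambda/2}\,\mathscr Mh(1-\lambda).
\]
The proof inserts $e^{-\ve\sigma}$, applies Fubini, and uses $\int_0^\infty\omega^{\lambda-1}e^{\mp i\omega\sigma}\,d\omega=\G(\lambda)e^{\mp i\pi\lambda/2}\sigma^{-\lambda}$, which is formula \eqref{ei} of Lemma~\ref{L:cis} after rescaling. Since $|e^{\mp i\pi\lambda/2}|=e^{\pm\pi y/2}$, this gives
\[
E_+[h]=\frac1{2\pi}\int_{\R}|\G(\lambda)|^2e^{\pi y}\,\bigl|\mathscr Mh(\tfrac{1-s}2-iy)\bigr|^2\,dy .
\]

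\emph{Step 3: the energy of $Wh$.} Applying the same formula to $Wh$ with the minus sign, and using Lemma~\ref{L:MellinW} to write $\mathscr M(Wh)(1-\lambda)=\mathscr Mh(\lambda-s)=\mathscr Mh(\frac{1-s}2+iy)$, I get
\[
E_-[Wh]=\frac1{2\pi}\int_{\R}|\G(\lambda)|^2e^{-\pi y}\,\bigl|\mathscr Mh(\tfrac{1-s}2+iy)\bigr|^2\,dy .
\]
The substitution $y\mapsto-y$, together with $|\G(\bar\lambda)|=|\G(\lambda)|$, turns this into the expression for $E_+[h]$ obtained in Step~2, which proves \eqref{E:MellinSwap}.

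\emph{Main obstacle: justification.} The algebra is routine; the difficulty is making every step legitimate.
\begin{itemize}
\item The Mellin transform $\mathscr Mh$ on the line $\Re=\frac{1-s}2\in(0,\frac12)$ converges absolutely. This uses that $h$ is bounded near $0$ and rapidly decreasing at infinity.
\item $\widehat h(\omega)=h(0^+)/(i\omega)+O(\omega^{-2})$ as $\omega\to\infty$, obtained by integrating by parts using the smoothness of $h$. Hence $\omega^{\lambda-1}\widehat h(\omega)$ is only conditionally integrable at infinity, and the Mellin transform must be taken as an Abel limit.
\item $Wh(\alpha)\sim h(0^+)\alpha^{s-1}$ at infinity, so $\widehat{Wh}$ exists only as an Abel boundary value, as in the definition of $E_\pm$.
\end{itemize}
To handle this I will run Steps~2--3 with the damping $e^{-\ve\sigma}$ (respectively $e^{-\ve\alpha}$). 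Then all integrals converge absolutely, and the regularized Mellin identity holds with $(\ve\pm i)^{-\lambda}$ in place of $e^{\mp i\pi\lambda/2}$. I will pass to $\ve\to0^+$ using the $L^2$-continuity of the Mellin transform on the critical line together with Fatou's lemma and dominated convergence. For the domination I will use the bounds $|\widehat h(\omega)|\lesssim\min(1,\omega^{-1})$ and $\omega^{s}\min(1,\omega^{-2})\in L^1$.
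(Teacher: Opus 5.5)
Your argument is the paper's proof. It uses Mellin--Plancherel on the line $\Re\lambda=\frac{1+s}{2}$, the identity $\mathscr M[\widehat h(\pm\cdot)](\lambda)=\G(\lambda)e^{\mp i\pi\lambda/2}\mathscr Mh(1-\lambda)$, the reflection of Lemma~\ref{L:MellinW}, and the substitution $y\mapsto -y$. The weights $e^{\pm\pi y}$ and the arguments $\frac{1-s}{2}\mp iy$ you obtain are correct.

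\textbf{The regularization does not work as written.} Inserting $e^{-\ve\sigma}$ (or $e^{-\ve\alpha}$ for $Wh$) cannot license Fubini. The absolute double integral factors as $\int_0^\infty\omega^{\Re\lambda-1}\,d\omega\cdot\int_0^\infty e^{-\ve\sigma}|h(\sigma)|\,d\sigma=\infty$, so the damping must act on $\omega$. In fact the factor $(\ve\pm i)^{-\lambda}$ you quote is exactly what the multiplicative damping $e^{-\ve\omega\sigma}$ produces, since $\int_0^\infty\omega^{\lambda-1}e^{-(\ve\pm i)\omega\sigma}\,d\omega=\G(\lambda)(\ve\pm i)^{-\lambda}\sigma^{-\lambda}$. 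With that choice the absolute double integral equals $\G(\Re\lambda)\ve^{-\Re\lambda}\int_0^\infty\sigma^{-\Re\lambda}|h(\sigma)|\,d\sigma<\infty$.

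You should use that damping. It is even slightly better adapted than the paper's $e^{-\delta\omega}$, because it only needs absolute convergence of the Mellin transform at $1-\lambda$. It therefore applies verbatim to $Wh$, which behaves like $h(0^+)\alpha^{s-1}$ at infinity and is not in $L^1$. With $e^{-\delta\omega}$, the absolute double integral for $Wh$ is $\G(\Re\lambda)\delta^{-\Re\lambda}\|Wh\|_{L^1}=\infty$ when $h(0^+)\neq0$, so an extra truncation argument is needed there.

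\textbf{Two smaller corrections.}

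First, your remark that $\omega^{\lambda-1}\widehat h(\omega)$ is only conditionally integrable at infinity is wrong. On the line $\Re\lambda=\frac{1+s}{2}$ one has $|\omega^{\lambda-1}\widehat h(\omega)|\lesssim\omega^{(s-3)/2}$ there, and $\frac{s-3}{2}<-1$. So the Mellin integral converges absolutely, and the only limit to take is $\ve\to0^+$ in the regularized identity, by dominated convergence.

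Second, the bound $\widehat h(\omega)=O(\omega^{-1})$, which you and the paper both derive by integrating by parts, uses $h'\in L^1(0,1)$. This holds for the functions $g_\xi$, which are smooth up to $\sigma=0$. It does not hold under the stated hypotheses alone: a factor $\sigma^{1/4}\sin(1/\sigma)$ near $0$ gives decay only like $\omega^{-7/8}$.
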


\begin{proof}
We first justify the Mellin identity by an explicit Abel regularization. For
$0<\Re\mu<1$ and $\delta>0$, set
\[
F_{\pm,\delta}(\omega):=e^{-\delta\omega}\widehat h(\pm\omega),\qquad \omega>0.
\]
Then, by Tonelli's theorem for the regularized integral,
\[
\int_0^\infty\omega^{\mu-1}e^{-\delta\omega}e^{\mp i\omega\sigma}\,d\omega
=\G(\mu)(\delta\pm i\sigma)^{-\mu}.
\]
Since $h$ is bounded near $0$ and rapidly decreasing at infinity, the right-hand
side is integrable against $h(\sigma)$ after restricting first to
$\varepsilon<\sigma<R$. Letting $\varepsilon\downarrow0$ and $R\uparrow\infty$
therefore gives
\[
\mathscr M[F_{\pm,\delta}](\mu)
=\G(\mu)\int_0^\infty h(\sigma)(\delta\pm i\sigma)^{-\mu}
\,d\sigma .
\]
For fixed $\mu$ on any compact substrip of $0<\Re\mu<1$, the integrand is
bounded by $C\,|h(\sigma)|\min\{\sigma^{-\Re\mu},1\}$, which is integrable.
Dominated convergence as $\delta\downarrow0$ thus yields
\begin{equation}\label{E:MellinFourier}
\mathscr M[\widehat h(\pm\cdot)](\mu)
=\G(\mu)e^{\mp i\pi\mu/2}\mathscr Mh(1-\mu),
\qquad 0<\Re\mu<1,
\end{equation}
where the left-hand side is the Mellin transform of the Abel boundary value.

For the particular functions occurring here, the boundary values belong to the
weighted space underlying $E_\pm$. Indeed, integration by parts gives
$\widehat h(\omega)=O(|\omega|^{-1})$ as $|\omega|\to\infty$, while the
corresponding Abel limit of $\widehat{Wh}$ is obtained from the same formula
with the Mellin transform of $Wh$. Since $0<s<1$, the weight $\omega^s$ is
compatible with these endpoint bounds. We may therefore apply Mellin--Plancherel
on the line $\Re\mu=(1+s)/2$ and obtain
\[
E_\pm[h]
=\frac1{2\pi}\int_\R
\left|\G\!\left(\frac{1+s}{2}+i\eta\right)\right|^2
 e^{\pm\pi\eta}
\left|\mathscr Mh\!\left(\frac{1-s}{2}-i\eta\right)\right|^2d\eta.
\]
For $Wh$, Lemma~\ref{L:MellinW} gives
\[
\mathscr M(Wh)(1-\mu)=\mathscr Mh(\mu-s),
\]
so that, on the same Mellin line,
\[
E_-[Wh]
=\frac1{2\pi}\int_\R
\left|\G\!\left(\frac{1+s}{2}+i\eta\right)\right|^2
 e^{-\pi\eta}
\left|\mathscr Mh\!\left(\frac{1-s}{2}+i\eta\right)\right|^2d\eta.
\]
Replacing $\eta$ by $-\eta$ and using
$|\G(c-i\eta)|=|\G(c+i\eta)|$ transforms the last expression into $E_+[h]$.
This proves \eqref{E:MellinSwap}. The only limiting operation is the Abel limit
above; it is taken before Mellin--Plancherel and is controlled by the displayed
majorant on compact Mellin substrips.
\end{proof}

\begin{theorem}[History energy identity]\label{T:history}
Let $a=1-2s\in(-1,1)$, and let $u_0\in\Sm$. Then
\begin{equation}\label{E:freq}
\|U_0\|_{L_a^2(\RN)}^2
=
\frac{2^{1-2s}}{\G(s)^2}
\int_{\Rd}\int_0^\infty
\omega^s|\mathscr G_\xi(\omega)|^2\,d\omega\,d\xi.
\end{equation}
\end{theorem}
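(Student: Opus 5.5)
The plan is to reduce the bulk energy to a family of one-dimensional identities, one for each tangential frequency $\xi$, and to chain the three Mellin lemmas just proved. First, since $U_0\in L^2_a(\RN)$ by Lemma \ref{L:U0L2}, for a.e.\ $z>0$ the slice $U_0(\cdot,z)$ lies in $L^2(\Rd)$. Plancherel in $x$ together with Tonelli then gives
\[
\|U_0\|_{L^2_a(\RN)}^2=\int_{\Rd}\int_0^\infty|\widehat U_0(\xi,z)|^2z^a\,dz\,d\xi .
\]
Lemma \ref{L:MellinReduction} rewrites the inner integral as $\frac{2^{1-2s}}{\G(s)^2}\int_0^\infty\nu^s|\mathfrak G_\xi(\nu)|^2\,d\nu$. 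It therefore remains to identify $\int_0^\infty\nu^s|\mathfrak G_\xi(\nu)|^2d\nu$ with $\int_0^\infty\omega^s|\mathscr G_\xi(\omega)|^2d\omega$ for a.e.\ $\xi$, and then to integrate in $\xi$.

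For this identification I use the second assertion of Lemma \ref{L:MellinW}, namely $\mathfrak G_\xi(\nu)=\widehat{Wg_\xi}(-\nu)$. It gives
\[
\int_0^\infty\nu^s|\mathfrak G_\xi(\nu)|^2d\nu=E_-[Wg_\xi].
\]
Lemma \ref{L:MellinSwap} applied with $h=g_\xi$ then yields $E_-[Wg_\xi]=E_+[g_\xi]$. Finally, since $g_\xi$ is supported in $[0,\infty)$, the one-sided transform $\widehat{g_\xi}(\omega)=\int_0^\infty e^{-i\omega\sigma}g_\xi(\sigma)\,d\sigma$ coincides with $\mathscr G_\xi(\omega)$ of \eqref{E:Gxi}. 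Hence $E_+[g_\xi]=\int_0^\infty\omega^s|\mathscr G_\xi(\omega)|^2d\omega$. Substituting into the $\xi$-integral gives \eqref{E:freq}. All integrands are nonnegative, so Tonelli justifies the order of integration.

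The main point requiring care is that $g_\xi$ satisfies the hypotheses of Lemma \ref{L:MellinSwap} for every (or a.e.) $\xi$. I check this directly. Take $v_0\in\mathscr S(\RNu)$ extending $u_0$. Then $\widehat{u_0}(\xi,-\sigma)=\widehat{v_0}(\xi,-\sigma)$ for $\sigma\ge0$, where the hat is the partial Fourier transform in $x$, which is Schwartz in $(\xi,\sigma)$. Multiplying by the smooth unimodular factor $e^{-4\pi^2i\sigma|\xi|^2}$ gives, for fixed $\xi$, a function of $\sigma$ that is smooth on $(0,\infty)$. It has the finite right limit $\widehat{u_0}(\xi,0)$ at $0$, and every $\sigma$-derivative is a polynomial in $|\xi|^2$ times Schwartz data, so it is rapidly decreasing.

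A secondary concern is whether the Abel boundary-value interpretation of $E_\pm$ used in Lemma \ref{L:MellinSwap} agrees with the pointwise objects here. For $\mathscr G_\xi$ this is immediate: the integral defining it converges absolutely, so the Abel limit coincides with it. For $\mathfrak G_\xi$, the integral converges absolutely for $\nu>0$ when $g_\xi$ vanishes to the appropriate order at $0$. In general I would argue as in Lemma \ref{L:U0L2}, integrating by parts near $\sigma=0$ to get conditional convergence, and then check that the Abel regularization $e^{-\delta\nu}$ converges to the same value. This is what I expect to be the only delicate verification.
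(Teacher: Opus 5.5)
Your proposal is correct and is essentially the paper's proof: Plancherel in $x$ with Tonelli, then Lemma~\ref{L:MellinReduction}, then the identity $\mathfrak G_\xi(\nu)=\widehat{Wg_\xi}(-\nu)$ of Lemma~\ref{L:MellinW} to obtain $E_-[Wg_\xi]$, and finally Lemma~\ref{L:MellinSwap} with $h=g_\xi$. Your checks that $g_\xi$ satisfies the hypotheses of Lemma~\ref{L:MellinSwap}, and that the one-sided transform $\widehat{g_\xi}$ coincides with $\mathscr G_\xi$ because $g_\xi$ is supported in $[0,\infty)$, make explicit what the paper leaves implicit.
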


\begin{proof}
By Plancherel in $x$ and Tonelli,
\[
\|U_0\|_{L_a^2}^2
=\int_{\Rd}\int_0^\infty|\widehat U_0(\xi,z)|^2z^a\,dz\,d\xi.
\]
Lemma~\ref{L:MellinReduction} and Lemma~\ref{L:MellinW} give the right-hand
side as
\[
\frac{2^{1-2s}}{\G(s)^2}\int_{\Rd}E_-[Wg_\xi]d\xi.
\]
Lemma~\ref{L:MellinSwap} identifies $E_-[Wg_\xi]=E_+[g_\xi]$, which is precisely
\eqref{E:freq}.
\end{proof}

The frequency representation suggests the following intrinsic quadratic form.

\begin{definition}\label{D:Qs}
For histories $u,v$ for which the integral converges, set
\[
Q_s(u,v):=
\frac{2^{1-2s}}{\G(s)^2}
\int_{\Rd}\int_0^\infty
\omega^s\mathscr G^u_\xi(\omega)\overline{\mathscr G^v_\xi(\omega)}\,d\omega\,d\xi,
\qquad Q_s(u):=Q_s(u,u).
\]
For $u_0\in\Sm$, Theorem~\ref{T:history} gives
\[
Q_s(u_0)=\|\mathcal P_su_0\|_{L_a^2(\RN)}^2.
\]
\end{definition}

\begin{lemma}[One-sided Fourier uniqueness]\label{L:onesided-Fourier}
Let $g\in L^2(\mathbb R)$ be supported in $[0,\infty)$, and let
$\mathscr G=\mathscr Fg$ be its Fourier transform. If $\mathscr G$ vanishes on a
measurable subset of $\mathbb R$ of positive measure, then $g=0$ almost everywhere.
In particular, if $\mathscr G(\omega)=0$ for almost every $\omega>0$, then $g=0$.
\end{lemma}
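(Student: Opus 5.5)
The statement is the classical Paley--Wiener / F.~and M.~Riesz uniqueness theorem for Hardy functions, and the plan is to reduce it to the boundary uniqueness of $H^2$ functions of the half-plane. Fix the convention of \eqref{E:Gxi}, $\mathscr G(\omega)=\int_{\R}e^{-i\omega\sigma}g(\sigma)\,d\sigma$. Since $g\in L^2$ is supported in $[0,\infty)$, the formula
\[
F(\zeta):=\int_0^\infty e^{-i\zeta\sigma}g(\sigma)\,d\sigma,\qquad \Im\zeta<0,
\]
defines a holomorphic function in the lower half-plane. The integral converges absolutely because $|e^{-i\zeta\sigma}|=e^{\sigma\Im\zeta}$ decays in $\sigma$. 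By Plancherel, $\sup_{\eta<0}\int_\R|F(\omega+i\eta)|^2d\omega=2\pi\|g\|_{L^2}^2<\infty$, so $F$ belongs to the Hardy space $H^2$ of the lower half-plane. Moreover, $F(\cdot+i\eta)\to\mathscr G$ in $L^2(\R)$ as $\eta\uparrow0$, by dominated convergence applied to $e^{\eta\sigma}g$ together with Plancherel. Hence $\mathscr G$ is the nontangential boundary value of $F$ almost everywhere.

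The key step is the standard fact that a nonzero $H^2$ function has $\log|\mathscr G|\in L^1(\R,\frac{d\omega}{1+\omega^2})$. I would obtain it by mapping the lower half-plane conformally onto the unit disc; the Cayley map takes $H^2$ of the half-plane to $H^2$ of the disc, up to a factor $(1-w)^{-1}$ or similar. I would then invoke Jensen's inequality, or equivalently the result that $\log|f^*|\in L^1(\partial\mathbb D)$ for $0\neq f\in H^p(\mathbb D)$, which follows from $\log|f(0)|\le\frac1{2\pi}\int\log|f^*|$ after dividing out a zero of $f$ at the origin. Pulled back to the line, the measure becomes $d\omega/(1+\omega^2)$.

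Now suppose $\mathscr G=0$ on a set $E$ of positive Lebesgue measure. Then $\log|\mathscr G|=-\infty$ on $E$, and $E$ has positive $\frac{d\omega}{1+\omega^2}$-measure. So $\log|\mathscr G|\notin L^1$, which forces $F\equiv0$, hence $\mathscr G=0$ almost everywhere, and $g=0$ by Plancherel. The "in particular" clause is the special case $E=(0,\infty)$.

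The only step requiring care is the integrability of $\log|f^*|$, which needs the subharmonicity of $\log|f|$ and Fatou's lemma applied to $\log^-$. Since this is textbook material (e.g.\ Rudin, Real and Complex Analysis, Thm.~17.17, or the Paley--Wiener theorem), I would cite it rather than reprove it, and verify only the identification of $\mathscr G$ as the boundary value of an $H^2$ function, which is the elementary computation above.
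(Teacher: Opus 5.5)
Your proposal is correct and follows essentially the same route as the paper. Both arguments realize $\mathscr G$ as the $L^2$ boundary value of the $H^2$ function $F(\zeta)=\int_0^\infty e^{-i\zeta\sigma}g(\sigma)\,d\sigma$ on the lower half-plane, invoke boundary uniqueness for Hardy functions, and conclude with Plancherel. The paper only cites boundary uniqueness, whereas you spell out its standard proof via the integrability of $\log|\mathscr G|$ against $d\omega/(1+\omega^2)$.
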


\begin{proof}
Because $g$ is supported in $[0,\infty)$, its Fourier transform is the boundary
value of an $H^2(\mathbb C_-)$ function. Boundary uniqueness for Hardy functions
then gives $\mathscr G=0$, and Plancherel gives $g=0$.
\end{proof}

\begin{corollary}\label{C:positive}
For $u_0\in\Sm$, $Q_s(u_0)=0$ implies $u_0=0$ almost everywhere. Hence
$Q_s^{1/2}$ is a norm on $\Sm$.
\end{corollary}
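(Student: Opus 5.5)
We argue from the definition of $Q_s$ together with Lemma~\ref{L:onesided-Fourier}. Suppose $u_0\in\Sm$ satisfies $Q_s(u_0)=0$. The integrand $\omega^s|\mathscr G_\xi(\omega)|^2$ in Definition~\ref{D:Qs} is nonnegative, so Tonelli's theorem shows that, for almost every $\xi\in\Rd$, $\mathscr G_\xi(\omega)=0$ for almost every $\omega>0$. Here the weight $\omega^s$ is strictly positive on $(0,\infty)$, so it removes no set of positive measure.

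We now fix such a $\xi$ and check the hypotheses of Lemma~\ref{L:onesided-Fourier} for $g=g_\xi$. By \eqref{E:Gxi}, $g_\xi$ is supported in $[0,\infty)$. It is bounded, being the unimodular factor $e^{-4\pi^2 i\sigma|\xi|^2}$ times $\widehat{u_0}(\xi,-\sigma)$. It is rapidly decreasing as $\sigma\to\infty$, because $u_0$ is the restriction of a Schwartz function $v_0$ and $\widehat{u_0}(\xi,t)=\mathscr F_{x\to\xi}v_0(\cdot,t)(\xi)$ decays rapidly in $t$. Hence $g_\xi\in L^1\cap L^2(\R)$, and $\mathscr G_\xi$ is its classical Fourier transform; the convention $e^{-i\omega\sigma}$ differs from \eqref{fou} only by a dilation, which is harmless. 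Lemma~\ref{L:onesided-Fourier} then gives $g_\xi=0$ almost everywhere. Since the phase factor never vanishes, $\widehat{u_0}(\xi,-\sigma)=0$ for a.e.\ $\sigma>0$.

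Consequently $\widehat{u_0}(\xi,t)=0$ for a.e.\ $(\xi,t)\in\Rd\times(-\infty,0)$. Fubini and Plancherel in $x$ give $\|u_0(\cdot,t)\|_{L^2(\Rd)}=0$ for a.e.\ $t<0$, so $u_0=0$ a.e.\ on $\RNm$. Since $u_0$ is continuous, it vanishes identically.

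For the norm assertion, $Q_s$ is a sesquilinear Hermitian form: $\mathscr G^u_\xi$ depends linearly on $u$, and the weight $\omega^s$ is real and positive. It is also positive semidefinite and finite on $\Sm$ by Theorem~\ref{T:history} and Lemma~\ref{L:U0L2}. Cauchy--Schwarz for $Q_s$ then yields the triangle inequality for $Q_s^{1/2}$, homogeneity is immediate, and definiteness is what was proved above.

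The only point that needs care is the measurability of the set of $\xi$ where the vanishing holds, which is needed to pass from "almost every $\xi$" to a pointwise-in-$\xi$ application of Lemma~\ref{L:onesided-Fourier}. This follows from joint measurability of $(\xi,\omega)\mapsto\mathscr G_\xi(\omega)$, which holds because $g_\xi(\sigma)$ is jointly continuous on $\sigma>0$ and dominated by an integrable function in $\sigma$.
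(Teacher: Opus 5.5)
Your argument is correct and is essentially the paper's own proof. Tonelli gives $\mathscr G_\xi=0$ a.e.\ on $(0,\infty)$ for a.e.\ $\xi$, Lemma~\ref{L:onesided-Fourier} then forces $g_\xi=0$, and removing the unimodular phase gives $u_0=0$. You also make explicit what the paper leaves implicit: the hypothesis check on $g_\xi$, the measurability point, and the remaining norm axioms (finiteness via Theorem~\ref{T:history} and Lemma~\ref{L:U0L2}, and Cauchy--Schwarz).
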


\begin{proof}
If $Q_s(u_0)=0$, then the frequency representation in Definition~\ref{D:Qs}
shows that $\mathscr G_\xi(\omega)=0$ for a.e. $(\xi,\omega)$ with $\omega>0$.
For a.e. fixed $\xi$, Lemma~\ref{L:onesided-Fourier} gives $g_\xi=0$, hence
$u_0=0$.
\end{proof}

\begin{definition}\label{D:history}
We denote by $\mathcal H^s$ the completion of $\Sm$ with respect to the norm
$Q_s^{1/2}$.
\end{definition}

\begin{corollary}\label{C:extends}
The oscillatory Poisson lifting
$\mathcal P_s:\Sm\to L_a^2(\RN)$ extends uniquely to an isometry
\[
\mathcal P_s:\mathcal H^s\longrightarrow L_a^2(\RN).
\]
Indeed, if $u_0^{(k)}\to u_0$ in $\mathcal H^s$, then
$\mathcal P_su_0^{(k)}$ is Cauchy in $L_a^2(\RN)$ by Theorem~\ref{T:history}; its limit
is independent of the approximating sequence and satisfies
\[
\|\mathcal P_su_0\|_{L_a^2(\RN)}=\|u_0\|_{\mathcal H^s}.
\]
\end{corollary}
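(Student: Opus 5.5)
This is the standard extension-by-density argument for an isometry. Everything reduces to Theorem~\ref{T:history}, which gives $\|\mathcal P_s u_0\|_{L^2_a(\RN)}^2=Q_s(u_0)$ for $u_0\in\Sm$.

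\emph{Step 1: linearity and isometry on $\Sm$.} The Poisson lifting \eqref{lift2} is linear in $u_0$. The maps $u_0\mapsto g_\xi\mapsto\mathscr G_\xi$ in \eqref{E:Gxi} are also linear, so $Q_s$ is the quadratic form of the sesquilinear form in Definition~\ref{D:Qs}. By Corollary~\ref{C:positive}, $Q_s^{1/2}$ is a norm on $\Sm$, so $\mathcal H^s$ is a Hilbert space in which $\Sm$ is dense. Applying Theorem~\ref{T:history} to the difference $u_0^{(k)}-u_0^{(j)}\in\Sm$ gives
\[
\|\mathcal P_su_0^{(k)}-\mathcal P_su_0^{(j)}\|_{L^2_a(\RN)}=Q_s\bigl(u_0^{(k)}-u_0^{(j)}\bigr)^{1/2}.
\]
Here linearity of $\mathcal P_s$ on $\Sm$ is essential: the identity is only stated for single elements of $\Sm$, and we apply it to a difference.

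\emph{Step 2: definition of the extension.} Let $u_0\in\mathcal H^s$ and choose $u_0^{(k)}\in\Sm$ with $u_0^{(k)}\to u_0$ in $\mathcal H^s$. Then $(u_0^{(k)})$ is Cauchy in $\mathcal H^s$, so by Step 1 the sequence $(\mathcal P_su_0^{(k)})$ is Cauchy in $L^2_a(\RN)$. Since $L^2_a(\RN)$ is complete, it has a limit, which we define to be $\mathcal P_s u_0$.

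\emph{Step 3: well-posedness and isometry.} For independence of the sequence, interleave two approximating sequences. The interleaved sequence still converges to $u_0$, so its image is Cauchy, and therefore both image sequences share one limit. Norm continuity then gives
\[
\|\mathcal P_su_0\|_{L^2_a(\RN)}=\lim_k\|\mathcal P_su_0^{(k)}\|_{L^2_a(\RN)}=\lim_k Q_s\bigl(u_0^{(k)}\bigr)^{1/2}=\|u_0\|_{\mathcal H^s}.
\]
Linearity passes to the limit, so the extension is a linear isometry.

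\emph{Step 4: uniqueness.} Any bounded, in particular any continuous, extension of $\mathcal P_s$ must agree with this one on the dense subspace $\Sm$. By continuity it therefore coincides with it on all of $\mathcal H^s$.

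No step is a real obstacle; this is the standard BLT extension argument. The only point needing care is the one noted in Step 1: Theorem~\ref{T:history} must be applied to differences, which is legitimate because $\Sm$ is a vector space and both $\mathcal P_s$ and $Q_s$ respect its linear structure.
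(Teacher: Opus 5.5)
Your proposal is correct and is the same density (BLT) extension argument the paper gives inline in the statement: $\mathcal P_s u_0^{(k)}$ is Cauchy by Theorem~\ref{T:history}, the limit does not depend on the approximating sequence, and the norm identity passes to the limit. You also spell out two points the paper leaves implicit: Theorem~\ref{T:history} is applied to differences through the linearity of $\mathcal P_s$ on $\Sm$, and Corollary~\ref{C:positive} is what makes $Q_s^{1/2}$ a genuine norm, so that $\Sm$ sits injectively inside its completion $\mathcal H^s$.
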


The preceding results identify the intrinsic energy space associated with
Schr\"odinger histories. We do not assert that the abstract completion
$\mathcal H^s$ has already been identified, as a set, with a concrete space of
past-time distributions; only the isometric realization through $\mathcal P_s$
is used below.

\section{Well-posedness of the nonlinear memory equation}\label{S:memory}
\label{S:wellpos}

Having completed the linear theory of the fully fractional
Schr\"odinger operator \(\Le^s\) and identified in Section
\ref{S:history} the intrinsic Hilbert space \(\mathcal H^s\) of
admissible histories, we now return to the nonlinear problem
\begin{equation}\label{E:nonlinear-memory}
\left\{
\begin{aligned}
\Le^s u(x,t)
&=
\mu |u(x,t)|^{p-1}u(x,t),
&& x\in\Rd,\quad t>0,
\\
u(x,t)
&=
u_0(x,t),
&& x\in\Rd,\quad t\leq0,
\end{aligned}
\right.
\end{equation}
where $d\ge1$ and
\[
u_0\in\mathcal H^s,\qquad \mu\in\C,\qquad p>1.
\]
The well-posedness results below concern only the ranges $1<p\le p_c$ specified in Theorem~\ref{T:memory-wp}; no claim is made here for $p>p_c$.

The fundamental point is that the prescribed history \(u_0\) has
already been converted, in a canonical way, into an initial datum for
the local half-space evolution. Indeed, by Corollary
\ref{C:extends}, the oscillatory Poisson lifting extends uniquely to
an isometry
\[
\mathcal P_s:\mathcal H^s
\longrightarrow
L_a^2(\RN),
\qquad a=1-2s.
\]
Thus, for every \(u_0\in\mathcal H^s\), there is a uniquely determined
\[
U_0=\mathcal P_su_0\in L_a^2(\RN)
\]
such that
\begin{equation}\label{E:history-isometry}
\|U_0\|_{L_a^2(\RN)}
=
\|u_0\|_{\mathcal H^s}.
\end{equation}

We use \(U_0\) as initial datum for the  singular
nonlinear Schr\"odinger evolution in the upper half-space:
\begin{equation}\label{cp02}
\begin{cases}
\p_t U - i(\Delta_x U + \Ba U) = 0, & \ \ X=(x,z)\in \mathbb{R}^{d+1}_+,\ t>0, \\
-\frac{2^{2s-1}\Gamma(s)e^{i \frac{\pi s}2}}
{\Gamma(1-s)} \displaystyle \lim_{z\to 0^+} z^a \p_z U(X,t) = \mu |U(x,0,t)|^{p-1}U(x,0,t), & \ \ x\in \Rd,\ t>0, \\
U(X,0)=U_0(X). 
\end{cases}
\end{equation}

Up to the explicit normalization in the boundary condition,
\eqref{cp02} is precisely the nonlinear Schr\"odinger equation with
nonlinear Neumann interaction studied in \cite{GS2}, and the critical and
subcritical regimes considered below are the ones introduced there in
\cite[Definition~2.7]{GS2}.

The two ranges of $a$ reach their thresholds for altogether different reasons.
When $0\le a<1$ the exponent $1+\frac{2(1-a)}{d+a+1}$ is the scaling-critical
one: it is the value of $p$ at which the parabolic scaling
$U\mapsto\la^{\frac{d+a+1}2}U(\la x,\la z,\la^2t)$, which preserves both
$L^2_a(\RN)$ and the boundary condition, leaves the problem invariant. When
$-1<a<0$ the situation is structurally different, and this is one of the
findings of that paper: the bulk propagator $\mathbb T^\star_a$ does not obey
these scalings at all, so that no scaling-critical exponent is available to
serve as a threshold. Its role is taken over by the dispersive behaviour of the
propagator, which in that range is the one of the $a=0$ problem, together with
the weighted two-exponent Strichartz theory that this forces; the resulting
threshold is $1+\frac{2}{d+1}$. The isometry \eqref{E:history-isometry}
transfers the size of the initial history to the weighted bulk norm, but it
does not by itself determine this second threshold. Thus we set
\begin{equation}\label{E:critical-exponents}
p_c=
\begin{cases}
\displaystyle
1+\frac{2(1-a)}{d+a+1},
& 0\leq a<1,
\\[3mm]
\displaystyle
1+\frac{2}{d+1},
& -1<a<0.
\end{cases}
\end{equation}

Here and below, given a Banach space $E$, we write $C^b_z([0,\infty);E)$ for
the space of bounded maps $z\mapsto F(\cdot,z,\cdot)$ from $[0,\infty)$ into
$E$ which are continuous up to and including $z=0$, normed by the supremum
over $z\ge0$; it is a closed subspace of $L^\infty_z E$, and membership in it
is what gives a pointwise meaning to the boundary value at $z=0$.

\begin{lemma}[Boundary trace of the free bulk evolution]\label{L:free-trace}
Let $I=[0,T]$ with $T<\infty$, and let $q,r$ be as in
Theorem~\ref{T:memory-wp}. Let $U_0\in L_a^2(\RN)$. If $0\le a<1$, then
\[
\mathbb T_a^\star(U_0)\in C^b_z\bigl([0,\infty);L_t^q(I;L_x^r(\Rd))\bigr);
\]
in particular $z\mapsto\mathbb T_a^\star(U_0)(\cdot,z,\cdot)$ extends
continuously to $z=0$ as an $L_t^q(I;L_x^r)$-valued map. If $-1<a<0$, the same
conclusion holds for the weighted free evolution
$k(z)\,\mathbb T_a^\star(U_0)$, in the mixed-norm framework of
\cite[Theorem~2.5]{GS2}; since $k(z)=1$ for $0\le z\le1$, this yields the same
boundary trace in $L_t^q(I;L_x^r)$.
\end{lemma}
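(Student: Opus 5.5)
The plan is to prove continuity of $z\mapsto \mathbb T_a^\star(U_0)(\cdot,z,\cdot)$ into $L^q_t(I;L^r_x)$ by density, using the uniform Strichartz bound of \cite{GS2} as the stability estimate. The starting point is the $L^\infty_z$ Strichartz estimate of \cite{GS2} for $0\le a<1$:
\[
\sup_{z\ge0}\|\mathbb T_a^\star(U_0)(\cdot,z,\cdot)\|_{L^q_t(I;L^r_x)}\le C\|U_0\|_{L^2_a(\RN)} .
\]
This says that $U_0\mapsto \mathbb T_a^\star(U_0)$ is bounded from $L^2_a(\RN)$ into $L^\infty_z L^q_tL^r_x$. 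Since $C^b_z([0,\infty);L^q_tL^r_x)$ is a closed subspace of $L^\infty_zL^q_tL^r_x$, it suffices to prove membership for $U_0$ in a dense class. The statement then follows by uniform approximation in $z$, including the endpoint $z=0$.

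For the dense class I will take finite sums of tensors $U_0(x,z)=\phi(x)\psi(z)$, with $\phi\in\mathscr S(\Rd)$ and $\psi\in\mathscr S^+$. By the Plancherel property of $\mathcal H_\nu$, this class is dense in $L^2_a(\RN)$. By the tensorization \eqref{Sa} and \eqref{E:multiplier}, such data evolve as
\[
\mathbb T_a^\star(U_0)(x,z,t)=\bigl(e^{it\Delta}\phi\bigr)(x)\,\bigl(e^{it\BaN}\psi\bigr)(z).
\]
It then suffices to show that $(z,t)\mapsto e^{it\BaN}\psi(z)$ is bounded on $[0,\infty)\times I$ and continuous in $z$ uniformly in $t\in I$. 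The key observation is that the image of $\mathcal H_\nu\psi$ is the rapidly decaying function $\lambda\mapsto e^{-it\lambda^2}\mathcal H_\nu\psi(\lambda)$, uniformly for $t\in I$. Applying the inverse transform, which is $\mathcal H_\nu$ itself by involutivity, gives
\[
e^{it\BaN}\psi(z)=\int_0^\infty(\lambda z)^{-\nu}J_\nu(\lambda z)\,e^{-it\lambda^2}\,\mathcal H_\nu\psi(\lambda)\,\lambda^a\,d\lambda .
\]
Here $w\mapsto w^{-\nu}J_\nu(w)$ is bounded and uniformly continuous on $[0,\infty)$, because $-\nu=\frac{1-a}{2}>0$ and this function is entire and even. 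Dominated convergence therefore yields joint continuity and boundedness. Multiplying by $e^{it\Delta}\phi\in L^q_t(I;L^r_x)$ gives continuity in $z$ with values in $L^q_tL^r_x$. The bound $\|\mathcal H_\nu\psi\|_{L^1(\lambda^a d\lambda)}<\infty$ requires decay of $\mathcal H_\nu\psi$, which holds because $\mathcal H_\nu$ preserves $\mathscr S^+$ by \cite{GS1}.

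For $-1<a<0$ I will repeat the argument with the weighted evolution $k(z)\mathbb T^\star_a(U_0)$. The first step is to invoke the mixed-norm estimate \cite[Theorem~2.5]{GS2} as the uniform bound, with the appropriate $L^\infty_z$ component. The second step is to run the same density argument with the smooth cutoff weight $k$. On tensor data $k(z)e^{it\BaN}\psi(z)$ remains continuous and bounded, since $k$ is continuous. Because $k=1$ on $[0,1]$, the trace is the same one.

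The main obstacle is the first step, namely ensuring that the estimates from \cite{GS2} really give a bound in $\sup_z$ that is uniform up to $z=0$, and not only an essential supremum. If only the essential supremum is available, I would verify the bound pointwise in $z$ for the dense class and then extend it. Since the dense class consists of continuous functions, the ess sup coincides with the sup there, so the closure argument still goes through.
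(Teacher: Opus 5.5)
Your proposal is correct in outline, but it reaches the key step, continuity in $z$ on a dense class, by a different mechanism from the paper.

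\textbf{The paper's route.} It takes data with $\mathcal H_\nu\psi\in C_0^\infty((0,\infty))$ and argues by energy. The quantity $\|\partial_z(1-\Delta_x)^mU(t)\|_{L^2_a}$ is conserved by the unitary group. It writes $V(z')-V(z)=\int_z^{z'}\partial_\zeta V\,d\zeta$ and applies Cauchy--Schwarz against $\zeta^{-a}$, which is integrable at $0$ precisely because $a<1$. Sobolev in $x$ then gives the quantitative modulus $\sup_t\|U(\cdot,z',t)-U(\cdot,z,t)\|_{L^r_x}\le C|z'^{1-a}-z^{1-a}|^{1/2}$, uniformly for all $-1<a<1$. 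Boundedness in $z$ of the dense-class maps is not proved from a kernel bound. It is read off from the Strichartz estimate of \cite{GS2}, since the essential supremum equals the supremum for continuous maps.

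\textbf{Your route.} You separate variables, write $e^{it\BaN}\psi$ through the Hankel inversion formula, and use dominated convergence. Because $|e^{-it\lambda^2}|=1$, your modulus of continuity is automatically uniform in $t$. The $x$-factor is handled simply by $e^{it\Delta}\phi\in L^\infty_tL^r_x$. This is more explicit: you get pointwise values at every $z$ and joint continuity. The paper's argument, by contrast, needs no pointwise Bessel asymptotics. A small simplification: if you choose $\mathcal H_\nu\psi\in C_0^\infty((0,\infty))$, as the paper does, you no longer need the mapping property $\mathcal H_\nu(\mathscr S^+)\subset\mathscr S^+$.

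\textbf{An error to fix.} That $w^{-\nu}J_\nu(w)$ is entire and even does not make it bounded on $[0,\infty)$. The large-argument asymptotics $|J_\nu(w)|\lesssim w^{-1/2}$ give $|w^{-\nu}J_\nu(w)|\lesssim w^{-\nu-\frac12}=w^{-a/2}$. This is bounded only for $a\ge0$; for $-1<a<0$ it grows, and it is not uniformly continuous either. So your boundedness claim, and your reason for boundedness of the weighted evolution (``since $k$ is continuous''), are wrong in the anomalous range. The repair uses only what you already have:
\begin{itemize}
\item \emph{Continuity.} At $z_0$ you only need a local majorant: for $0\le z\le z_0+1$, $|(\lambda z)^{-\nu}J_\nu(\lambda z)|\le C_{z_0}(1+\lambda^{(-a/2)_+})$, which is integrable against $|\mathcal H_\nu\psi(\lambda)|\lambda^a$.
\item \emph{Boundedness.} It is the size of the weight, not its continuity, that does the work. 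With $k(z)=\min\{1,z^{a/2}\}$ one has $k(z)(\lambda z)^{-a/2}\le\lambda^{-a/2}$. Hence $k(z)|e^{it\BaN}\psi(z)|\le C\int_0^\infty(1+\lambda^{-a/2})|\mathcal H_\nu\psi(\lambda)|\lambda^a\,d\lambda$, uniformly in $z$ and $t$.
\item \emph{Alternative.} Your own fallback also works: the essential supremum equals the supremum for continuous maps, applied to the weighted estimate of \cite[Theorem~2.5]{GS2}. This yields the bound exactly as in the paper.
\end{itemize}
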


\medskip
This statement is the free-evolution half of \cite[Theorem~8.2]{GS2}, to which
we refer for the companion assertion concerning the inhomogeneous term
$\mathbb D_a(F)$, not needed here. We include a proof because the form in which
the result is used below --- on a compact time interval $I$, and with the
weight $k$ in the anomalous range --- is the one we shall invoke repeatedly,
and because the argument is short.

\begin{proof}
We give the argument because the $L^\infty_z$ bounds in the homogeneous
Strichartz estimates do not by themselves imply continuity at the boundary.

\smallskip
\noindent\emph{A Neumann-compatible dense class.} We first work with data in
\[
\mathcal D_a:=\operatorname{span}
\Bigl\{\varphi(x)\,\psi(z)\ \Big|\
\varphi\in\mathscr S(\Rd),\
\mathcal H_{\frac{a-1}2}(\psi)\in C_0^\infty((0,\infty))\Bigr\},
\]
where $\nu=\frac{a-1}2$ and $\mathcal H_\nu$ is the Hankel transform
\eqref{E:hankel}. Three properties of this class will be used, and we record
them in the order in which they are needed.

\emph{(a) Its elements belong to $\mathscr S^+$, and therefore satisfy the
homogeneous Neumann condition \eqref{E:neumann}.} If $\mathcal H_\nu(\psi)=g\in
C_0^\infty((0,\infty))$, then by the inversion of \cite[Theorem~2.1]{GS1} we
have $\psi=\mathcal H_\nu(g)$, that is
$\psi(z)=\int_0^\infty(\lambda z)^{-\nu}J_\nu(\lambda z)\,g(\lambda)\,\lambda^a\,d\lambda$
with $g$ smooth and compactly supported away from the origin. Since
$w\mapsto w^{-\nu}J_\nu(w)$ is an even entire function, $\psi$ is a smooth
function of $z^2$; and repeated integration by parts in $\lambda$, using the
oscillation of $J_\nu$, shows that $\psi$ and all its derivatives decay
rapidly. Hence $\psi\in\mathscr S^+$, and by the property of that space recalled
after \eqref{E:hankeldiag} the elements of $\mathcal D_a$ satisfy
\eqref{E:neumann} and lie in $\Dom(\BaN)$.

\emph{(b) It is dense in $L^2_a(\RN)$.} The transform $\mathcal H_\nu$ is a
surjective isometry of $L^2(\R^+,z^adz)$ onto $L^2(\R^+,\lambda^ad\lambda)$, so
the functions $\psi$ occurring above are dense in $L^2(\R^+,z^adz)$; since
$\mathscr S(\Rd)$ is dense in $L^2(\Rd)$ and finite sums of tensor products are
dense in the Hilbert space tensor product
$L^2(\Rd)\otimes L^2(\R^+,z^adz)=L^2_a(\RN)$, the claim follows.

\emph{(c) It is invariant under $\Delta_x$, under $\BaN$ and under the
propagator $\mathbb T^\star_a(\cdot)(\cdot,\cdot,t)$.} The first two are clear
from \eqref{E:hankeldiag} and from $\mathscr S(\Rd)$ being invariant under
$\Delta_x$. For the third, by \eqref{E:multiplier} the propagator acts on the
Fourier--Hankel side as multiplication by
$e^{-it(4\pi^2|\xi|^2+\lambda^2)}$, which factors as a function of $\xi$ times a
function of $\lambda$: it therefore preserves the tensor structure, maps
$\mathscr S(\Rd)$ into itself in the first variable, and in the second leaves
the support of $\mathcal H_\nu(\psi)$ unchanged, that symbol being smooth and
of modulus one.

\smallskip
\noindent
We work with
$\mathcal D_a$ rather than with $C_0^\infty(\RN)$ because the latter is not
invariant under the propagator, whereas the argument below uses that the
quantities entering \eqref{E:conserved} are conserved in time.

\smallskip
\noindent\emph{A conserved quantity.} Let $U_0\in\mathcal D_a$,
write $U(\cdot,\cdot,t)=\mathbb T_a^\star(U_0)(\cdot,\cdot,t)$, the operator
$\mathbb T^\star_a$ being the one defined in \eqref{E:Tstar} and identified in
\eqref{E:Tstar-group}, and fix
$m\in\mathbb N$ with $4m>d$, so that the Sobolev embedding gives
$\|f\|_{L^r_x}\le C(d,m)\|(1-\Delta_x)^mf\|_{L^2_x}$ for every $2\le r\le\infty$.
Set $V(t):=(1-\Delta_x)^mU(\cdot,\cdot,t)$, so that $V(t)\in\mathcal D_a$ for
every $t$ by property (c) above. Since $\mathcal H_\nu$ diagonalizes
$\BaN$ by \eqref{E:hankeldiag}, it also diagonalizes the associated
quadratic form: for $\psi\in\Dom(\BaN)$,
\begin{equation}\label{E:formdiag}
\int_0^\infty|\p_z\psi|^2\,z^a\,dz
=\int_0^\infty\lambda^2\,|\mathcal H_\nu(\psi)(\lambda)|^2\,\lambda^a\,d\lambda .
\end{equation}
Together with Plancherel in the tangential variable, \eqref{E:formdiag} gives,
for every $t$,
\begin{equation}\label{E:conserved}
\|\p_zV(t)\|_{L^2_a(\RN)}^2
=\int_{\Rd}\!\int_0^\infty \lambda^2\,
\bigl|\mathcal H_{\nu}(\widehat{V(t)}(\xi,\cdot))(\lambda)\bigr|^2
\lambda^a\,d\lambda\,d\xi
=\|\p_z(1-\Delta_x)^mU_0\|_{L^2_a(\RN)}^2 ,
\end{equation}
the last equality because, by \eqref{E:multiplier}, the propagator acts on the
Fourier--Hankel side as multiplication by a symbol of modulus one --- in this
last step only the unimodularity is used, so that step is independent of the
normalisations chosen for the two transforms. We stress the point that will be
used repeatedly below: \emph{the right-hand side of \eqref{E:conserved} does not
depend on $t$}, so that $\|V(t)\|_{L^2_a(\RN)}$ and $\|\p_zV(t)\|_{L^2_a(\RN)}$
are finite and independent of $t$.

\smallskip
\noindent\emph{Continuity in $z$.} We estimate directly the difference between
two values of $z$; this gives continuity at once, with no splitting of the time
interval and with no restriction on the range of $z$.
Indeed, for $0\le z<z'<\infty$ and every $t$,
\[
\|V(\cdot,z',t)-V(\cdot,z,t)\|_{L^2_x}
\le\int_z^{z'}\|\p_\zeta V(\cdot,\zeta,t)\|_{L^2_x}\,d\zeta
\le\Bigl(\int_z^{z'}\zeta^{-a}d\zeta\Bigr)^{\frac12}
\Bigl(\int_0^\infty\|\p_\zeta V(\cdot,\zeta,t)\|_{L^2_x}^2\zeta^ad\zeta\Bigr)^{\frac12},
\]
by Cauchy--Schwarz with the weight split as $\zeta^{a/2}\cdot\zeta^{-a/2}$.
Since $a<1$ the first factor equals
$\bigl((z'^{\,1-a}-z^{\,1-a})/(1-a)\bigr)^{1/2}$, which is finite for all
$0\le z<z'<\infty$, the singularity of $\zeta^{-a}$ at the origin being
integrable precisely because $a<1$; the second factor is
independent of $t$ by \eqref{E:conserved}. Applying the Sobolev embedding in
$x$ we obtain, for a constant $C_1=C_1(d,a,m,U_0)>0$,
\begin{equation}\label{E:zHolder}
\sup_{t\in\R}\ \|U(\cdot,z',t)-U(\cdot,z,t)\|_{L^r_x(\Rd)}
\ \le\ C_1\,\bigl|z'^{\,1-a}-z^{\,1-a}\bigr|^{\frac12},
\qquad 0\le z,z'<\infty .
\end{equation}
We first observe that \eqref{E:zHolder} forces the quantity it estimates to be
finite for \emph{every} $z\ge0$, once it is finite for a single one. Indeed the
homogeneous estimate of \cite[Theorem~2.4]{GS2}, respectively of
\cite[Theorem~2.5]{GS2} in the anomalous range, bounds the essential supremum
in $z$ of $\|\mathbb T^\star_a(U_0)(\cdot,z,\cdot)\|_{L^q_t(I;L^r_x)}$,
respectively of its weighted counterpart, by $C\|U_0\|_{L^2_a}<\infty$; hence
there is at least one $z_1>0$, and in fact almost every $z_1>0$, at which
$\|U(\cdot,z_1,\cdot)\|_{L^q_t(I;L^r_x)}<\infty$, the weight $k$ being strictly
positive there. Since $\|\cdot\|_{L^q_t(I)}\le T^{1/q}\|\cdot\|_{L^\infty_t}$ on
the finite interval $I$, \eqref{E:zHolder} gives
\[
\|U(\cdot,z,\cdot)-U(\cdot,z_1,\cdot)\|_{L^q_t(I;L^r_x)}
\le T^{\frac1q}C_1\bigl|z^{1-a}-z_1^{1-a}\bigr|^{\frac12}<\infty
\]
for every $z\ge0$, and therefore
$\|U(\cdot,z,\cdot)\|_{L^q_t(I;L^r_x)}<\infty$ for every $z\ge0$.
Since $1-a>0$, the right-hand side of \eqref{E:zHolder} tends to $0$ as
$z'\to z$; hence
$z\mapsto U(\cdot,z,\cdot)$ is continuous from $[0,\infty)$ into
$L^\infty_t(\R;L^r_x)$, and therefore into $L^q_t(I;L^r_x)$ as well, because
$\|\cdot\|_{L^q_t(I)}\le T^{1/q}\|\cdot\|_{L^\infty_t}$ on the finite interval
$I=[0,T]$. Note that this part of the argument uses only $-1<a<1$, and so
applies verbatim in the anomalous range.

It follows that, for $U_0\in\mathcal D_a$, the map
$z\mapsto\mathbb T_a^\star(U_0)(\cdot,z,\cdot)$ is not merely continuous at
$z=0$ but belongs to $C^b_z\bigl([0,\infty);L^q_t(I;L^r_x)\bigr)$. Indeed,
when $0\le a<1$ the homogeneous estimate of \cite[Theorem~2.4]{GS2} bounds its
essential supremum in $z$ by $C\|U_0\|_{L^2_a}$, and for a map which is
continuous on $[0,\infty)$ the essential supremum coincides with the genuine
supremum. When $-1<a<0$ the same reasoning applied to the weighted estimate of
\cite[Theorem~2.5]{GS2} gives
$k\,\mathbb T^\star_a(U_0)\in C^b_z\bigl([0,\infty);L^q_t(I;L^r_x)\bigr)$,
the map $z\mapsto k(z)$ being continuous and positive on $(0,\infty)$ and
identically $1$ on $[0,1]$.

For general $U_0\in L_a^2(\RN)$, choose
$U_0^{(n)}\in \mathcal D_a$ with $U_0^{(n)}\to U_0$ in $L_a^2$.
When $0\le a<1$, the homogeneous estimate of \cite[Theorem~2.4]{GS2}, applied
with $F=0$ and vanishing Neumann datum to the admissible triple
$(q,r,\infty)$ of \eqref{E:admissible-positive} below, gives
\[
\operatorname*{ess\,sup}_{z\ge0}\ \|\mathbb T_a^\star(U_0^{(n)}-U_0^{(j)})(\cdot,z,\cdot)\|_{L_t^q(I;L_x^r)}
\le C\|U_0^{(n)}-U_0^{(j)}\|_{L_a^2}.
\]
Since, by the first part of the proof, each of the maps
$z\mapsto \mathbb T_a^\star(U_0^{(n)}-U_0^{(j)})(\cdot,z,\cdot)$ is continuous
on $[0,\infty)$, the essential supremum coincides with the genuine
supremum, and therefore
\begin{equation}\label{E:unifCauchy}
\sup_{z\ge0}\ \|\mathbb T_a^\star(U_0^{(n)}-U_0^{(j)})(\cdot,z,\cdot)\|_{L_t^q(I;L_x^r)}
\le C\|U_0^{(n)}-U_0^{(j)}\|_{L_a^2}.
\end{equation}
When $-1<a<0$, the corresponding estimate is the weighted one of
\cite[Theorem~2.5]{GS2}: with the two exponents $q,q_\infty$ of
\eqref{E:admissible-negative} below and the weight $k(z)=\min\{1,z^{a/2}\}$, it reads
\[
\operatorname*{ess\,sup}_{z\ge0}\ \bigl\|k(z)\,\mathbb T_a^\star(U_0^{(n)}-U_0^{(j)})(\cdot,z,\cdot)\bigr\|_{\left(L_t^{q}+L^{q_\infty}_t\right)(I;L_x^r)}
\le C\|U_0^{(n)}-U_0^{(j)}\|_{L_a^2},
\]
the target being the sum space $L^q_t(I)+L^{q_\infty}_t(I)$, which on the finite
interval $I$ coincides with $L^q_t(I)$ with equivalent norms, since
$q<q_\infty$. Arguing as above, we obtain \eqref{E:unifCauchy} in this range as
well, for the weighted maps $k\,\mathbb T_a^\star(U_0^{(n)}-U_0^{(j)})$.

In either case the approximating maps are continuous and bounded on
$[0,\infty)$ and form a uniformly Cauchy sequence with values in the Banach
space $L^q_t(I;L^r_x)$. Since $C^b_z\bigl([0,\infty);L^q_t(I;L^r_x)\bigr)$ is
complete, being a closed subspace of $L^\infty_zL^q_t(I;L^r_x)$, the limit lies
in it; and the same estimate applied to $U_0^{(n)}-U_0$ identifies that limit
with $\mathbb T_a^\star(U_0)$ when $0\le a<1$, and with
$k\,\mathbb T_a^\star(U_0)$ when $-1<a<0$. This proves the assertion, the
boundary value in the anomalous range being the unweighted one because
$k\equiv1$ on $[0,1]$.
\end{proof}

\subsection{The boundary formulation}

We first recall explicitly the part of the functional framework of
\cite{GS2} that is needed here. Set
\[
r=p+1.
\]
When \(0\leq a<1\), let \(q>2\) satisfy
\begin{equation}\label{E:admissible-positive}
\frac{2}{q}+\frac{d}{r}
=
\frac{d+a+1}{2}.
\end{equation}
On a time interval \(I=[0,T]\), the mild solutions of \eqref{cp02} belong to
\begin{equation}\label{E:GS2-positive-space}
C\bigl(I;L_a^2(\RN)\bigr)
\cap
C^b_z\bigl([0,\infty);
L_t^q(I;L_x^r(\Rd))\bigr).
\end{equation}
Membership in the first factor, and the bound in $L^\infty_zL^q_t(I;L^r_x)$,
are furnished by \cite[Theorems~2.4 and~2.9]{GS2}; the upgrade of that bound to
the space $C^b_z$, which is what gives the boundary value a pointwise meaning,
comes from Theorem~8.2 of \cite{GS2}, see also Lemma~\ref{L:free-trace}, for
the free bulk term, and from Theorem~8.1 of \cite{GS2} for the boundary
Duhamel term. We note that membership in $C^b_z$ is in any case built into the
notion of mild solution of \cite[Definition~2.6]{GS2}.
For $a\ge0$ this bulk admissibility relation is also the boundary
admissibility relation used in Theorem~8.1 of \cite{GS2}.  For $a<0$ the two
relations differ, as recorded below.  The boundary Duhamel term has a strong
trace by Theorem~8.1 of \cite{GS2}. Hence the full mild solution has a well-defined boundary value
\(U(\cdot,0,\cdot)\) in $L_t^q(I;L_x^r(\Rd))$, with
\begin{equation}\label{E:positive-trace}
U(\cdot,z,\cdot)
\longrightarrow
U(\cdot,0,\cdot)
\quad\text{in }L_t^q(I;L_x^r(\Rd))
\qquad\text{as }z\to0^+.
\end{equation}
When \(-1<a<0\), the theory of \cite{GS2} involves the weight
\[
k(z)=\min\{1,z^{a/2}\}
\]
and the exponents \(q,q_\infty\) determined by the equations:
\begin{equation}\label{E:admissible-negative}
\frac{2}{q_\infty}+\frac{d}{r}
=
\frac{d+a+1}{2},
\qquad
\frac{2}{q}+\frac{d}{r}
=
\frac{d+1}{2},
\qquad q>2.
\end{equation}
The corresponding mild solutions belong to the weighted mixed-norm
classes of \cite[Theorem~2.10]{GS2}. The boundary Duhamel term has a strong
trace in the boundary-admissible space $L_t^{q_\infty}L_x^r$ by
Theorem~8.1 of \cite{GS2}; since $q<q_\infty$ and $I$ is finite, this embeds into
$L_t^q(I;L_x^r)$. Therefore the full mild solution has a strong boundary
trace in the space needed below:
\begin{equation}\label{E:negative-trace}
U(\cdot,z,\cdot)
\longrightarrow
U(\cdot,0,\cdot)
\quad\text{in }L_t^q(I;L_x^r(\Rd))
\qquad\text{as }z\to0^+.
\end{equation}

We stress that the notation \(U(x,0,t)\) used below always denotes
the strong boundary trace furnished by
\eqref{E:positive-trace} or \eqref{E:negative-trace}; it is not a
pointwise evaluation at \(z=0\) of an oscillatory bulk integral.

We can now give the notion of solution that will be used for the
memory problem.

\begin{definition}[Mild solution of the memory problem]
\label{D:memory-solution}
Let $u_0\in\mathcal H^s$, let $U_0=\mathcal P_su_0$, and let $T>0$. Let
$\mu\in\C\setminus\{0\}$ and $1<p\le p_c$, set $r=p+1$, and let $q$ be
determined by \eqref{E:admissible-positive} when $0\le a<1$, respectively by
\eqref{E:admissible-negative} when $-1<a<0$.
A mild solution on $(0,T]$ with prescribed history $u_0$ is the pair $(u_0,u)$,
where
\[
u\in L_t^q((0,T);L_x^r(\Rd))
\]
is the strong boundary trace
\[
u(x,t)=U(x,0,t),\qquad 0<t\le T,
\]
of the mild solution $U$ of the half-space problem \eqref{cp02}, in the sense
of \cite{GS2}, with initial datum $U_0$. The trace is understood in the strong
mixed-norm sense of \eqref{E:positive-trace} or \eqref{E:negative-trace}.
The half-space mild solution with datum $U_0$ is unique in the class of
\cite[Definition~2.6]{GS2} in the subcritical regimes, and, at the critical
exponent $p=p_c$, in the ball of that class in which the fixed point of
\cite[Theorems~2.9(1) and~2.10(1)]{GS2} is produced. In either case the trace
$u$ is therefore determined by $u_0$, and uniqueness in
Theorem~\ref{T:memory-wp} is accordingly uniqueness within this
extension-induced class, with the same proviso at $p=p_c$. No claim is made
that a solution of the fractional equation arising in some other sense must be
of this form.
\end{definition}

\begin{remark}\label{R:concrete-history}
If $u_0\in\Sm$, then $u_0$ is an actual past-time function and one may form the
glued function
\[
\widetilde u(x,t)=
\begin{cases}
u_0(x,t),&t\le0,\\
u(x,t),&0<t\le T.
\end{cases}
\]
We do not assert that $\widetilde u$ solves any equation across $t=0$, nor that
it is continuous there: no compatibility between $u_0(\cdot,0)$ and
$u(\cdot,0^+)$ is available, and none is used. The point of the glueing is only
that, in this case, the prescribed past is an actual function and the pair
$(u_0,u)$ may be read as a single function on $\R^{d+1}$. In this sense the
abstract definition above agrees with the usual prescribed-history
interpretation whenever the history has a concrete representative. For a
general element of the completion $\mathcal H^s$, no pointwise evaluation on
$t\le0$ is used.
\end{remark}

This definition makes the role of the extension procedure explicit:
the prescribed past is first lifted to the bulk, the nonlinear
evolution is solved there, and the future evolution of the memory
problem is the boundary trace of the resulting half-space solution.

The next result identifies explicitly the nonlinear boundary term in
this formulation.

\begin{proposition}[Boundary Duhamel term]
\label{P:boundary-Duhamel}
Let \(U\) be a mild solution of \eqref{cp02}, and set
\[
u(x,t)=U(x,0,t),\qquad t>0.
\]
Then the boundary Duhamel contribution in the mild representation of
\(U\) is
\begin{equation}\label{E:boundary-Duhamel}
\frac{\mu}{\G(s)}
\int_0^t
\frac{1}{\sigma^{1-s}}
S(\sigma)
\bigl(
|u(\cdot,t-\sigma)|^{p-1}
u(\cdot,t-\sigma)
\bigr)(x)\,d\sigma .
\end{equation}
The identity holds in the space appearing in \eqref{E:positive-trace} or
\eqref{E:negative-trace}, the boundary value being the one furnished by
Theorem~8.1 of \cite{GS2}; see the proof, where it is stressed that no passage
to the limit under the integral sign is involved.
\end{proposition}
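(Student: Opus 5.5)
The plan is to start from the mild formulation of \eqref{cp02} in the sense of \cite{GS2}. There the solution is written as
\[
U=\mathbb T_a^\star(U_0)+\Theta_a^\star(\Phi),
\]
with $F=0$ and Neumann datum $\Phi=\lim_{z\to0^+}z^a\p_zU$. The boundary condition in \eqref{cp02} identifies this datum as
\[
\Phi=c_s\,\mu\,|u|^{p-1}u,\qquad c_s=-\frac{2^{1-2s}\G(1-s)}{\G(s)e^{i\pi s/2}},
\]
where $u=U(\cdot,0,\cdot)$ is the strong trace furnished by \eqref{E:positive-trace}, respectively \eqref{E:negative-trace}. The mild solution of \cite[Definition~2.6]{GS2} is defined precisely by this fixed-point identity with the nonlinearity evaluated on the trace. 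So the boundary Duhamel contribution is by definition $\Theta_a^\star(c_s\mu|u|^{p-1}u)$, and no limit has to be passed under an integral at this stage.

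Next I would compute this term explicitly from \eqref{Theta2}, which is \eqref{Theta} with \eqref{SaS0} inserted:
\[
\Theta_a^\star(\Phi)(x,z,t)=-\frac{2^{2s-1}e^{i\pi s/2}}{\G(1-s)}\int_0^t\frac{e^{i z^2/(4\sigma)}}{\sigma^{1-s}}\,S(\sigma)\bigl(\Phi(\cdot,t-\sigma)\bigr)(x)\,d\sigma.
\]
Multiplying by $c_s$ gives the prefactor $1/\G(s)$. This is the identity $c_s\cdot\bigl(-2^{2s-1}e^{i\pi s/2}\G(s)/\G(1-s)\bigr)=1$ already checked in the proof of Theorem \ref{T:id}, once one divides by the $\G(s)$ contained in that expression. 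The result at level $z$ is therefore
\[
\frac{\mu}{\G(s)}\int_0^t\frac{e^{iz^2/(4\sigma)}}{\sigma^{1-s}}\,S(\sigma)\bigl(|u|^{p-1}u(\cdot,t-\sigma)\bigr)(x)\,d\sigma.
\]
Equivalently, by Theorem \ref{T:id} and Proposition \ref{P:Es}, this is the Poisson lifting of the potential $\mathscr J_{2s}$ applied to $\mu|u|^{p-1}u\,\mathbf 1_{t>0}$. That matches \eqref{pot2}.

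The remaining step, which I expect to be the main obstacle, is to identify the boundary value at $z=0$ with \eqref{E:boundary-Duhamel}. Setting $z=0$ formally removes the phase $e^{iz^2/(4\sigma)}$, but justifying this directly would mean taking a limit under an oscillatory integral whose integrand is only in $L^q_tL^r_x$. I would avoid that route. Instead I would invoke \cite[Theorem~8.1]{GS2}, which asserts that $\Theta_a^\star$ maps the boundary-admissible space into $C^b_z([0,\infty);L^q_t(I;L^r_x))$, with trace equal to the operator evaluated at $z=0$.

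To make that operator concrete, I would argue by density. For $\Phi$ in a dense smooth class, say Schwartz in $(x,t)$, the integrand is dominated by $\sigma^{s-1}\|\Phi\|_{L^\infty_tL^2_x}$ on $(0,t)$. Dominated convergence then gives the $z\to0$ limit with the phase removed. The operator
\[
\Phi\mapsto\frac1{\G(1-s)}\int_0^t\sigma^{s-1}S(\sigma)\Phi(\cdot,t-\sigma)\,d\sigma
\]
extends continuously from the dense class, because by \cite[Theorem~8.1]{GS2} both sides are bounded from the Neumann-data space into $L^q_t(I;L^r_x)$. The identity therefore extends to $\Phi=c_s\mu|u|^{p-1}u$, which lies in the dual admissible space by Hölder, as in the fixed-point argument of \cite{GS2}.

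Finally I would combine this with the strong trace of the free term given by Lemma \ref{L:free-trace}. This shows that the boundary Duhamel contribution to $u$ is exactly \eqref{E:boundary-Duhamel}, in the space of \eqref{E:positive-trace} or \eqref{E:negative-trace}.
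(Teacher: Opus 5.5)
Your proposal is correct and follows essentially the paper's own route: the Duhamel formula of \cite{GS2} with Neumann datum $c_s\mu|u|^{p-1}u$, the explicit kernel \eqref{Theta2} in which $c_s$ produces the factor $1/\G(s)$, and identification of the $z=0$ value on a dense Schwartz class, extended by the boundary Strichartz estimate and \cite[Theorem~8.1]{GS2}. Your dominated-convergence step on the Schwartz class is legitimate even though the paper stresses that it does not pass a limit under the integral. The phase has modulus one, and the paper's own majorant $\sigma^{s-1}\sup_{0<\sigma\le t}\|S(\sigma)\Phi(\cdot,t-\sigma)\|_{L^r_x}$ is independent of $z$; you should use this $L^r_x$ bound rather than the $L^2_x$ bound you wrote. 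Also, the constant in your displayed $z=0$ operator should be $-2^{2s-1}e^{i\pi s/2}/\G(1-s)$ rather than $1/\G(1-s)$, though this slip is harmless.
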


\begin{proof}
The generalized Duhamel formula of \cite{GS2}, with vanishing
interior forcing, gives
\begin{equation}\label{E:bulk-Duhamel}
U
=
\mathbb T_a^\star(U_0)
+
c_s\mu\,
\Theta_a^\star
\bigl(
|U(\cdot,0,\cdot)|^{p-1}U(\cdot,0,\cdot)
\bigr),
\end{equation}
where
\[
c_s
=
-\frac{2^{1-2s}\G(1-s)}
{\G(s)e^{i\pi s/2}}.
\]
By Theorem~\ref{T:id}, the boundary operator is represented by
\[
\Theta_a^\star(\Phi)(x,z,t)
=
-\frac{2^{2s-1}e^{i\frac{\pi s}{2}}}{\G(1-s)}
\int_0^t
\frac{e^{i\frac{z^2}{4\sigma}}}{\sigma^{1-s}}
S(\sigma)
\bigl(\Phi(\cdot,t-\sigma)\bigr)(x)\,d\sigma.
\]
We emphasise that the boundary value is \emph{not} obtained by letting
$z\to0^+$ under the integral sign. Such an interchange is not available here:
the phase $e^{iz^2/4\sigma}$ oscillates without bound as $\sigma\to0^+$, and no
dominating function is at hand. What is used instead is that the right-hand
side above is already meaningful at $z=0$, and that Theorem~8.1 of \cite{GS2}
identifies it with the boundary trace. In detail: for
$\Phi\in\mathscr S(\R^{d+1})$ the integral converges absolutely for every
$z\ge0$, the value $z=0$ included, since $\sigma\mapsto\sigma^{s-1}$ is
integrable at the origin for $s>0$ while
$\sup_{0<\sigma\le t}\|S(\sigma)(\Phi(\cdot,t-\sigma))\|_{L^r_x}<\infty$; the
boundary Strichartz estimate of \cite[Theorem~7.4]{GS2} then extends
$\Theta_a^\star$ by density to every $\Phi\in L^{q'}_tL^{r'}_x$; and
Theorem~8.1 of \cite{GS2} asserts that
\[
\lim_{z\to z_0}\
\bigl\|\Theta_a^\star(\Phi)(\cdot,z,\cdot)-\Theta_a^\star(\Phi)(\cdot,z_0,\cdot)
\bigr\|_{L^q_tL^r_x}=0
\qquad\text{for every } z_0\ge0,
\]
the value at $z_0$ being in each case the one furnished by that extension.
Taking $z_0=0$ therefore gives
\[
\Theta_a^\star(\Phi)(x,0,t)
=
-\frac{2^{2s-1}e^{i\frac{\pi s}{2}}}{\G(1-s)}
\int_0^t
\frac{1}{\sigma^{1-s}}
S(\sigma)
\bigl(\Phi(\cdot,t-\sigma)\bigr)(x)\,d\sigma.
\]
Taking
\[
\Phi=|u|^{p-1}u
\]
and using the normalization in \eqref{cp02} gives
\eqref{E:boundary-Duhamel}.
\end{proof}

\begin{corollary}[The boundary memory equation]\label{C:volterra}
Under the hypotheses of Proposition~\ref{P:boundary-Duhamel}, the boundary
trace $u$ of the mild solution of the memory problem satisfies, for $0<t\le T$,
the Volterra integral equation
\begin{equation}\label{E:volterra}
u(x,t)
=
\mathbb T_a^\star(\mathcal P_su_0)(x,0,t)
\;+\;
\frac{\mu}{\G(s)}
\int_0^t
\frac{1}{\sigma^{1-s}}\,
S(\sigma)\bigl(|u(\cdot,t-\sigma)|^{p-1}u(\cdot,t-\sigma)\bigr)(x)\,d\sigma ,
\end{equation}
the identity holding in $L^q_t(I;L^r_x(\Rd))$ in the sense described in
Proposition~\ref{P:boundary-Duhamel}.
\end{corollary}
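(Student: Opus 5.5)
The plan is to evaluate the generalized Duhamel representation \eqref{E:bulk-Duhamel} at the boundary, using only facts that have already been established. A mild solution $U$ of \eqref{cp02} in the sense of \cite{GS2} satisfies, in the solution class \eqref{E:GS2-positive-space} (respectively its weighted analogue when $-1<a<0$),
\[
U=\mathbb T_a^\star(U_0)+c_s\mu\,\Theta_a^\star\bigl(|U(\cdot,0,\cdot)|^{p-1}U(\cdot,0,\cdot)\bigr),\qquad U_0=\mathcal P_su_0 .
\]
We set $u=U(\cdot,0,\cdot)$, the strong trace in $L^q_t(I;L^r_x)$ furnished by \eqref{E:positive-trace} or \eqref{E:negative-trace}. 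The task is then to take the strong limit $z\to0^+$ of each of the two terms separately in $L^q_t(I;L^r_x)$.

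For the free term, Lemma \ref{L:free-trace} shows that $z\mapsto\mathbb T_a^\star(U_0)(\cdot,z,\cdot)$ belongs to $C^b_z([0,\infty);L^q_t(I;L^r_x))$. In the anomalous range the same holds for $k\,\mathbb T_a^\star(U_0)$, and since $k\equiv1$ on $[0,1]$ the limit at $z=0$ is still the unweighted one. We therefore define $\mathbb T_a^\star(\mathcal P_su_0)(x,0,t)$ to be this continuous boundary value. This is well defined because $U_0\in L^2_a$, by Corollary \ref{C:extends}.

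For the boundary term, we take $\Phi=|u|^{p-1}u$. Since $u\in L^q_tL^r_x$ with $r=p+1$, the function $\Phi$ lies in $L^{q/p}_tL^{r'}_x$, and on the finite interval $I$ this embeds into the dual-admissible space $L^{q'}_tL^{r'}_x$ needed by the boundary Strichartz estimate of \cite[Theorem~7.4]{GS2}. Checking this exponent inclusion is a small but necessary step: $q/p\ge q'$ is equivalent to $q\ge p+1$, and if that fails one should instead use the exact pairing that \cite{GS2} employs for its fixed point. With $\Phi$ in the right space, Proposition \ref{P:boundary-Duhamel} already identifies the boundary value $c_s\mu\,\Theta_a^\star(\Phi)(\cdot,0,\cdot)$ with the right-hand integral in \eqref{E:boundary-Duhamel}. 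That identification rests on the continuity statement of Theorem 8.1 of \cite{GS2} at $z_0=0$, together with the normalization identity $c_s\cdot\bigl(-2^{2s-1}e^{i\pi s/2}\G(s)/\G(1-s)\bigr)=1$ recorded in the proof of Theorem \ref{T:id}.

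Adding the two limits, the left side $U(\cdot,z,\cdot)$ converges to $u$, while the right side converges to the sum of the two boundary values. Uniqueness of strong limits in $L^q_t(I;L^r_x)$ then gives \eqref{E:volterra}.

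The main obstacle is to make sure that the boundary Duhamel integral is interpreted consistently. For Schwartz $\Phi$ it is an absolutely convergent Bochner integral at $z=0$, but for $\Phi=|u|^{p-1}u$ it must be understood through the density extension in Proposition \ref{P:boundary-Duhamel}. I would handle this by approximating $\Phi$ in $L^{q'}_tL^{r'}_x$ by Schwartz functions and passing to the limit on both sides, using the boundary Strichartz bound. This approximation step, and not any interchange of $z\to0^+$ with the $\sigma$-integral, is where the care is needed.
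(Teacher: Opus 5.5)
Your proposal is correct and follows the paper's proof. The paper takes the strong boundary trace of the Duhamel representation \eqref{E:bulk-Duhamel} term by term, using Lemma~\ref{L:free-trace} for the free part and Proposition~\ref{P:boundary-Duhamel} for the boundary Duhamel part, and concludes by uniqueness of limits in $L^q_t(I;L^r_x)$; your argument does the same. The exponent condition $q\ge p+1$ that you hedge on is in fact automatic: the admissibility relations give $q\ge r=p+1$ for all $1<p\le p_c$, with equality exactly at $p=p_c$.
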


\begin{proof}
Take the boundary trace of \eqref{E:bulk-Duhamel}, which exists by
Definition~\ref{D:memory-solution}, and replace the trace of the second
summand by \eqref{E:boundary-Duhamel}.

\end{proof}

\begin{remark}\label{R:volterra}
Equation \eqref{E:volterra} exhibits the memory structure of the problem in a
single formula: the entire influence of the prescribed past is carried by the
first term, the boundary trace of the free half-space evolution of the lifted
history $\mathcal P_su_0$, whereas the second term is a Volterra convolution in
which the weakly singular kernel $\sigma^{s-1}$ couples the solution
to its own past on $(0,t)$. We stress that \eqref{E:volterra} is a
\emph{consequence} of the extension-induced formulation of
Definition~\ref{D:memory-solution}, and is not used as its definition; in
particular we do not assert that \eqref{E:volterra} alone determines $u$.
\end{remark}

\begin{remark}\label{R:potential}
Formula \eqref{E:boundary-Duhamel} is precisely the nonlinear
fractional potential associated with the fundamental solution
\(\mathscr E_s\) of \(\Le^s\). Indeed, Theorem~\ref{T:id} identifies the
kernel of \(\Theta_a^\star\), up to the normalization appearing
above, with the oscillatory Poisson lifting of \(\mathscr E_s\). Thus the
boundary Duhamel term in the half-space theory agrees with the
nonlinear potential constructed in Section~\ref{S:inverse}. We stress that this
identification is available only in the range $0<s<\min\{1,d/2\}$ for which
$\mathscr E_s$ was constructed in \eqref{Es}; in particular, when $d=1$ it
excludes the whole anomalous range $-1<a<0$, that is $\frac12<s<1$. The
well-posedness theory below does not use the present remark.
\end{remark}

\begin{remark}\label{R:graph-formulation}
Definition~\ref{D:memory-solution} does not require a priori that
a concrete glued representative $\widetilde u$, when one is available as in
Remark~\ref{R:concrete-history}, belong to \(\Dom(\Le^s)\). Suppose that this
additional regularity does hold, that
\[
|\widetilde u|^{p-1}\widetilde u\in L^2\bigl(\Rd\times(0,\infty)\bigr),
\]
and, when $0<s<\frac12$, that the stronger graph-domain condition
$\widetilde u\in\Dom(\Le^{\frac14+\frac s2})$ of Theorem~\ref{T:DtNL2} is also
satisfied. Then that theorem identifies the weighted Neumann trace of the
extension with \(\Le^s\widetilde u\), and the mild formulation is consistent
with the graph-space equation
\[
\Le^s\widetilde u=\mu\,|\widetilde u|^{p-1}\widetilde u
\qquad\text{in}\ \ L^2\bigl(\Rd\times(0,\infty)\bigr).
\]
The restriction to $t>0$ is essential, and not a matter of convenience: for
$t\le0$ the function $\widetilde u$ is the prescribed history, and there
$\Le^s\widetilde u$ is an unconstrained element of $L^2$, which the problem
does not determine. The well-posedness result below does not require this
additional graph regularity.
\end{remark}

\subsection{Well-posedness}

We are now ready to transfer the nonlinear theory of \cite{GS2} to
the memory problem. The well-posedness statement below is formulated in the
extension-induced mild sense and does not require the graph regularity needed
for the Dirichlet-to-Neumann identification. When that additional regularity
holds, the mild solution also satisfies the fractional equation in $L^2$ for
$t>0$; for $0<s<\frac12$, this conclusion requires the stronger hypothesis in
Theorem~\ref{T:DtNL2}.

\begin{theorem}[Well-posedness of the nonlinear memory problem]
\label{T:memory-wp}
Let $0<s<1$, set $a=1-2s$, let $\mu\in\C\setminus\{0\}$, and let
$u_0\in\mathcal H^s$ be a prescribed history.

\smallskip
\noindent\emph{(i) The range $0\le a<1$.} Let
\[
p_c=1+\frac{2(1-a)}{d+a+1},
\]
suppose that $1<p\leq p_c$, set $r=p+1$, and let $q$ be determined by
\eqref{E:admissible-positive}. We note that $q>2$ is automatic: since
$2<r\le p_c+1=\frac{2(d+2)}{d+a+1}$, \eqref{E:admissible-positive} gives
$\frac2q\le\frac{d+a+1}{d+2}<1$, the last inequality because $a<1$.
\begin{itemize}
\item[(a)] If $p=p_c$, there exists
$\varepsilon_0=\varepsilon_0(d,a,c_s\mu)>0$ such that
\[
\|u_0\|_{\mathcal H^s}\leq\varepsilon_0
\]
implies that, for every $T<\infty$, the problem has a unique mild solution on
$(0,T]$ with prescribed history $u_0$, in the sense of
Definition~\ref{D:memory-solution}.
\item[(b)] If $1<p<p_c$, there exists
$T=T\bigl(d,a,p,\mu,\|u_0\|_{\mathcal H^s}\bigr)>0$ such that the problem has a
unique mild solution on $(0,T]$ with prescribed history $u_0$. If, in addition,
$\operatorname{Im}(c_s\mu)=0$ --- equivalently, by
Remark~\ref{R:admissible-mu} below, if $\mu\in e^{i\frac{\pi s}{2}}\R$ --- then this
solution extends to a mild solution on $(0,T]$ for every $T<\infty$, by
\cite[Theorem~2.9(2)]{GS2}, applied with vanishing interior forcing and
boundary coefficient $-c_s\mu$.
\end{itemize}

\smallskip
\noindent\emph{(ii) The range $-1<a<0$.} Let
\[
p_c=1+\frac{2}{d+1},
\]
suppose that $1<p\leq p_c$, set $r=p+1$, and let $q,q_\infty$ satisfy
\eqref{E:admissible-negative}.
\begin{itemize}
\item[(a)] If $p=p_c$, then for every $T<\infty$ there exists
$\varepsilon_1=\varepsilon_1(d,a,c_s\mu,T)>0$ such that
\[
\|u_0\|_{\mathcal H^s}\leq\varepsilon_1
\]
implies that the problem has a unique mild solution on $(0,T]$ with prescribed
history $u_0$. One may take
$\varepsilon_1=\varepsilon_0\bigl(C\max\{1,T^{\frac1q-\frac1{q_\infty}}\}\bigr)^{-1}$,
with $\varepsilon_0$ and $C$ as in \cite[Theorem~2.10(1)]{GS2}; since
$\frac1q-\frac1{q_\infty}>0$ in this range, $\varepsilon_1\to0$ as
$T\to\infty$, and no global statement is asserted here.
\item[(b)] If $1<p<p_c$, there exists $T>0$ such that the problem has a unique
mild solution on $(0,T]$ with prescribed history $u_0$.
\end{itemize}

\smallskip
\noindent
In both ranges uniqueness is understood within the extension-induced class of
Definition~\ref{D:memory-solution}; at the critical exponent $p=p_c$, in parts
(i)(a) and (ii)(a), it is uniqueness in the ball of that class in which the
fixed point of \cite[Theorems~2.9(1) and~2.10(1)]{GS2} is produced.
In the range $0\le a<1$ the local Lipschitz
dependence furnished by the fixed-point argument is quantified in
Proposition~\ref{P:contdep} below; no separate continuous-dependence statement
is asserted for $-1<a<0$.
\end{theorem}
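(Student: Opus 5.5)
The plan is to reduce everything to the half-space theory of \cite{GS2} through the isometry of Corollary~\ref{C:extends}, and then read off the memory solution as a boundary trace via Definition~\ref{D:memory-solution}.

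\emph{Step 1 (lifting the datum).} Given $u_0\in\mathcal H^s$, set $U_0=\mathcal P_su_0\in L^2_a(\RN)$. By \eqref{E:history-isometry} we have $\|U_0\|_{L^2_a}=\|u_0\|_{\mathcal H^s}$. Every smallness or size hypothesis on the history therefore transfers verbatim to the bulk datum, with no loss of constants.

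\emph{Step 2 (matching the boundary problem).} Multiplying the boundary condition of \eqref{cp02} by $c_s=-\frac{2^{1-2s}\G(1-s)}{\G(s)e^{i\pi s/2}}$ rewrites it as $\lim_{z\to0^+}z^a\p_zU=c_s\mu|U|^{p-1}U$. Since $c_s\big(-\frac{2^{2s-1}e^{i\pi s/2}\G(s)}{\G(1-s)}\big)=1$, as recorded in the proof of Theorem~\ref{T:id}, this is exactly the Neumann problem \eqref{nozeroin} of \cite{GS2}. It has vanishing interior forcing $F=0$ and nonlinear boundary datum $\Phi=c_s\mu|U(\cdot,0,\cdot)|^{p-1}U(\cdot,0,\cdot)$. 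I then check that the exponents match: $r=p+1$, with $q$ given by \eqref{E:admissible-positive} or \eqref{E:admissible-negative}, and $p_c$ as in \eqref{E:critical-exponents}. These are the critical and subcritical ranges of \cite[Definition~2.7]{GS2}.

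\emph{Step 3 (invoking \cite{GS2}).} Case (i), $0\le a<1$:
\begin{itemize}
\item for $p=p_c$, apply \cite[Theorem~2.9(1)]{GS2} with $\ve_0$ depending on $(d,a,c_s\mu)$;
\item for $p<p_c$, apply the local theory, which gives $T$ depending on $\|U_0\|_{L^2_a}=\|u_0\|_{\mathcal H^s}$;
\item for globalization, use \cite[Theorem~2.9(2)]{GS2}, whose hypothesis is that the boundary coefficient $-c_s\mu$ be real, i.e.\ $\operatorname{Im}(c_s\mu)=0$.
\end{itemize}
Case (ii), $-1<a<0$: apply \cite[Theorem~2.10]{GS2}. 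Its smallness is measured in a mixed norm $L^q_t+L^{q_\infty}_t$ on $I=[0,T]$. I convert this to $L^q_t(I)$ using Hölder, $\|f\|_{L^q(I)}\le T^{\frac1q-\frac1{q_\infty}}\|f\|_{L^{q_\infty}(I)}$, which is where the factor $C\max\{1,T^{\frac1q-\frac1{q_\infty}}\}$ and hence $\ve_1(T)$ come from.

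\emph{Step 4 (existence of the trace).} In each case the half-space mild solution $U$ has a strong trace $U(\cdot,0,\cdot)\in L^q_t(I;L^r_x)$. The free term is handled by Lemma~\ref{L:free-trace}, and the boundary Duhamel term by \cite[Theorem~8.1]{GS2}; together they give \eqref{E:positive-trace} and \eqref{E:negative-trace}. Defining $u=U(\cdot,0,\cdot)$ then yields a mild solution in the sense of Definition~\ref{D:memory-solution}.

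\emph{Step 5 (uniqueness).} Uniqueness follows because $u_0$ determines $U_0$ (the lifting is injective and well defined on the completion). In turn $U_0$ determines $U$ uniquely, in the class of \cite[Definition~2.6]{GS2} or, at $p=p_c$, in the fixed-point ball. Hence $u$ is determined by $u_0$.

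\emph{Main obstacle.} The delicate step is the bookkeeping in case (ii). There I must verify that the weighted mixed-norm class of \cite[Theorem~2.10]{GS2}, with weight $k(z)=\min\{1,z^{a/2}\}$, still produces an unweighted boundary trace in $L^q_t(I;L^r_x)$; this works because $k\equiv1$ near $z=0$. I must also track the $T$-dependence of the critical smallness constant honestly, since it is precisely this dependence that prevents any global statement in that range.
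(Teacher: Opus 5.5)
Your proposal is correct and follows essentially the same route as the paper. You lift the history by the isometry of Corollary~\ref{C:extends}, invoke \cite[Theorems~2.9 and~2.10]{GS2} for the half-space problem \eqref{cp02}, obtain the strong boundary trace from Lemma~\ref{L:free-trace} and \cite[Theorem~8.1]{GS2}, and deduce uniqueness from uniqueness of the half-space mild solution. Your H\"older-based account of the factor $\max\{1,T^{\frac1q-\frac1{q_\infty}}\}$ in $\varepsilon_1$, using $q<q_\infty$ on the finite interval $I$, is a reasonable gloss on a point the paper's proof leaves implicit; note also that uniqueness only needs $\mathcal P_s$ to be well defined, not injective.
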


\begin{proof}
By Corollary~\ref{C:extends},
\[
U_0=\mathcal P_su_0\in L_a^2(\RN),
\qquad
\|U_0\|_{L_a^2(\RN)}
=
\|u_0\|_{\mathcal H^s}.
\]
Consequently, the hypotheses on the initial datum in Theorems~2.9
and~2.10 of \cite{GS2} translate directly into the corresponding
hypotheses on the prescribed history $u_0$.

Those theorems provide existence and uniqueness of a mild solution
$U$ of the half-space problem \eqref{cp02} in the critical and
subcritical regimes stated above. In the subcritical range $0\le a<1$ and
when $\operatorname{Im}(c_s\mu)=0$, the continuation of $U$ beyond the initial
existence time follows from \cite[Theorem~2.9(2)]{GS2}. The free bulk component
has a strong boundary trace by Theorem~8.2 of \cite{GS2}, equivalently by
Lemma~\ref{L:free-trace}, while the boundary
Duhamel component has a strong trace by Theorem~8.1 of \cite{GS2}. Hence the
full solution has the strong boundary trace required in
Definition~\ref{D:memory-solution}, and its trace on $t>0$ defines the
corresponding mild memory solution. We stress that
Lemma~\ref{L:free-trace} is stated on a compact interval $I=[0,T]$; accordingly,
whenever the half-space solution is global in time, the memory solution is
obtained on $(0,T]$ for each $T<\infty$, which is the form in which the
conclusion is stated above. When $u_0\in\Sm$,
Remark~\ref{R:concrete-history} recovers the usual glued past--future representative.

Uniqueness here is uniqueness within the extension-induced class of
Definition~\ref{D:memory-solution}, and follows directly from uniqueness of
the half-space mild solution. A converse identification with arbitrary
graph-space solutions requires the additional regularity in
Remark~\ref{R:graph-formulation}. In the range $0\le a<1$, continuous dependence
is proved separately in Proposition~\ref{P:contdep}.

\end{proof}

\begin{remark}[The admissible coupling constants]\label{R:admissible-mu}
The hypothesis $\operatorname{Im}(c_s\mu)=0$, which is the condition under
which the $L^2_a$ mass of the extended solution is conserved, is \emph{not} the
familiar requirement that the coupling constant be real. Since
$\G(s),\G(1-s)>0$ for $0<s<1$, we may write
\[
c_s=-A_s\,e^{-i\frac{\pi s}{2}},
\qquad
A_s:=\frac{2^{1-2s}\G(1-s)}{\G(s)}>0,
\]
so that $\arg c_s=\pi-\frac{\pi s}{2}$, and consequently
\begin{equation}\label{E:admissible-mu}
\operatorname{Im}(c_s\mu)=0
\ \Longleftrightarrow\
\mu\in e^{i\frac{\pi s}{2}}\,\R .
\end{equation}
In particular, since $e^{i\frac{\pi s}{2}}\R\cap\R=\{0\}$ for every $0<s<1$, no
\emph{real} coupling constant $\mu\ne0$ satisfies this hypothesis; the whole of
the classical focusing and defocusing range $\mu=\pm c$, with $c>0$, is
excluded. The admissible coupling constants form the
real line rotated by $e^{i\frac{\pi s}{2}}$, the conjugate of the phase
$e^{-i\frac{\pi s}{2}}$ which relates $\Le^s$ to Samko's operator
$(\Delta_x+i\p_t)^s$, see the identity following \eqref{sam}. The rotation
disappears as $s\to0^+$, where one recovers the classical condition
$\mu\in\R$.
\end{remark}

We now prove the continuous dependence asserted in Theorem~\ref{T:memory-wp}.
The relevant half-space statement in the present regime is implicit in the fixed-point argument of
\cite[Theorem~2.9]{GS2}, but it is not stated there as a separate
Lipschitz-continuity theorem; that theorem asserts existence and uniqueness. We therefore include
the short difference estimate below, using only the linear estimate of
\cite[Theorem~2.4]{GS2} and the standard pointwise inequality for the
nonlinearity. Since the lifting \(\mathcal P_s\) is an isometry, this gives the
corresponding statement for histories.

For \(I=[0,T]\), set
\[
\mathcal X_T:=C\bigl(I;L^2_a(\RN)\bigr)\cap
C^b_z\bigl([0,\infty);L^q_t(I;L^r_x(\Rd))\bigr),
\]
with the weighted analogue \(Uk\in\mathcal X_T\),
\(k(z)=\min\{1,z^{a/2}\}\), in the range \(-1<a<0\).

\begin{proposition}[Lipschitz dependence]\label{P:contdep}
Let $0<s\le\frac12$, equivalently $0\le a<1$, and let $p$, $q$, $r$ be
as in part (i) of Theorem~\ref{T:memory-wp}.

\smallskip
\noindent\emph{(1) The subcritical case $1<p<p_c$.} For every $M>0$ there exist
$T=T(d,a,p,\mu,M)>0$ and $C=C(d,a,p,\mu,M)>0$, both independent of the
individual data within the ball of radius $M$, such that the following holds
with $I=[0,T]$.
\begin{itemize}
\item[(i)] \emph{(half-space problem)} if $U,\widetilde U$ are the mild
solutions of \eqref{cp02} on $I$ with initial data
$U_0,\widetilde U_0\in L^2_a(\RN)$ of norm at most $M$, then
\[
\|U-\widetilde U\|_{\mathcal X_T}
\leq C\|U_0-\widetilde U_0\|_{L^2_a(\RN)};
\]
\item[(ii)] \emph{(memory problem)} consequently, if $u,\widetilde u$ are the
mild solutions of \eqref{E:nonlinear-memory} corresponding to histories
$u_0,\widetilde u_0\in\mathcal H^s$ of norm at most $M$, then
\[
\|u-\widetilde u\|_{L^q_t(I;L^r_x(\Rd))}
\leq C\|u_0-\widetilde u_0\|_{\mathcal H^s}.
\]
\end{itemize}

\smallskip
\noindent\emph{(2) The critical case $p=p_c$.} The same two estimates hold for
every $M\le\varepsilon_0$, with $\varepsilon_0$ as in
Theorem~\ref{T:memory-wp}(i)(a), on $I=[0,T]$ for every $T<\infty$, with a
constant $C=C(d,a,p,\mu)$ independent of $T$.

\smallskip
\noindent
The restriction on $M$ in (2) is not technical: for $p=p_c$ and $M>\varepsilon_0$
Theorem~\ref{T:memory-wp} produces no solutions to be compared, and at the
scaling-critical exponent the existence time is in any case not a function of
the size of the datum alone. No separate Lipschitz estimate for the
$-1<a<0$ regime is asserted here; the two-exponent contraction used in
\cite[Theorem~2.10]{GS2} is not rewritten as a separate continuous-dependence
statement.
\end{proposition}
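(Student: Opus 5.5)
The plan is to prove the half-space estimate (i) by a difference argument on the mild (Duhamel) formulation, and then obtain (ii) by composing with the isometric lifting and taking boundary traces.

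\textbf{Setting up the difference.} By \eqref{E:bulk-Duhamel}, both solutions satisfy
\[
U=\mathbb T_a^\star(U_0)+c_s\mu\,\Theta_a^\star\bigl(N(U(\cdot,0,\cdot))\bigr),
\qquad N(w)=|w|^{p-1}w .
\]
The same holds for $\widetilde U$. Subtracting gives
\[
U-\widetilde U=\mathbb T_a^\star(U_0-\widetilde U_0)+c_s\mu\,\Theta_a^\star\bigl(N(u)-N(\widetilde u)\bigr),
\]
where $u,\widetilde u$ denote the boundary traces.

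\textbf{Bounding the two terms.}
\begin{itemize}
\item \emph{Free term.} Bound it in $\mathcal X_T$ by $C\|U_0-\widetilde U_0\|_{L^2_a}$. The $C(I;L^2_a)$ part follows from unitarity, \eqref{E:Tstar-group}. The $C^b_z L^q_tL^r_x$ part follows from the homogeneous estimate of \cite[Theorem~2.4]{GS2}, upgraded to $C^b_z$ by Lemma~\ref{L:free-trace}.
\item \emph{Boundary term.} Use the inhomogeneous boundary Strichartz estimate (\cite[Theorem~2.4]{GS2}, equivalently \cite[Theorem~7.4]{GS2}), with trace continuity from \cite[Theorem~8.1]{GS2}. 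This bounds the term in $\mathcal X_T$ by
\[
C|c_s\mu|\,\|N(u)-N(\widetilde u)\|_{L^{\tilde q'}_t(I;L^{r'}_x)},
\]
for a dual admissible pair with the same spatial exponent $r=p+1$, so that $r'=(p+1)/p$.
\item \emph{Nonlinearity.} Apply the pointwise inequality $|N(w)-N(\widetilde w)|\le C(|w|^{p-1}+|\widetilde w|^{p-1})|w-\widetilde w|$, then H\"older in $x$ with exponents $\frac{p-1}{p+1}+\frac{2}{p+1}=\frac{p}{p+1}$ and H\"older in $t$. This gives
\[
\|N(u)-N(\widetilde u)\|_{L^{\tilde q'}_tL^{r'}_x}\le C\,T^{\theta}\bigl(\|u\|^{p-1}_{L^q_tL^r_x}+\|\widetilde u\|^{p-1}_{L^q_tL^r_x}\bigr)\|u-\widetilde u\|_{L^q_tL^r_x}.
\]
Here $\theta>0$ in the subcritical case and $\theta=0$ at $p=p_c$; this is exactly the exponent bookkeeping already carried out in \cite[Theorem~2.9]{GS2}.
\end{itemize}

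\textbf{Closing the estimate.}
\begin{itemize}
\item \emph{Subcritical case.} The fixed point of \cite[Theorem~2.9]{GS2} produces solutions on $[0,T(M)]$ lying in a ball of radius $2CM$ in $\mathcal X_T$. Since $\|u\|_{L^q_tL^r_x}\le\|U\|_{\mathcal X_T}$, shrinking $T$ (still depending only on $M$) makes $C|c_s\mu|T^\theta(2CM)^{p-1}\le\frac12$. The boundary contribution is then absorbed into the left side, giving (i) with constant $2C$.
\item \emph{Critical case.} Here $\theta=0$, and smallness comes instead from $M\le\varepsilon_0$. The solutions lie in the ball of radius $2C\varepsilon_0$ in which the fixed point was produced, and $\varepsilon_0$ was chosen so that the same factor is at most $\frac12$. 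This gives a $T$-independent constant.
\end{itemize}

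\textbf{Passing to histories.} For (ii), set $U_0=\mathcal P_su_0$ and $\widetilde U_0=\mathcal P_s\widetilde u_0$. Linearity and the isometry of Corollary~\ref{C:extends} give $\|U_0-\widetilde U_0\|_{L^2_a}=\|u_0-\widetilde u_0\|_{\mathcal H^s}\le 2M$, and each datum has norm at most $M$. By Definition~\ref{D:memory-solution}, $u-\widetilde u$ is the trace at $z=0$ of $U-\widetilde U$. The $C^b_z$ norm dominates its value at $z=0$, so $\|u-\widetilde u\|_{L^q_tL^r_x}\le\|U-\widetilde U\|_{\mathcal X_T}$, and (ii) follows from (i).

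\textbf{Main obstacle.} I expect the main difficulty to be matching the dual exponents for the boundary Duhamel operator so that H\"older closes with the same $(q,r)$. This requires citing precisely the inhomogeneous boundary estimate in the admissibility relation \eqref{E:admissible-positive}. My first step would be to mirror the contraction estimate in the proof of \cite[Theorem~2.9]{GS2}, replacing $N(u)$ by the difference $N(u)-N(\widetilde u)$.
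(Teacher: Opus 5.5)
Your proposal is correct and follows the paper's proof essentially step for step. Both arguments subtract the two Duhamel representations and bound the free and boundary terms by the linear estimates of \cite[Theorem~2.4]{GS2}. Both control the nonlinear difference by the pointwise inequality and H\"older, with the gain $T^\kappa$ that is absent at $p=p_c$. Both then absorb, either by shrinking $T=T(M)$ or through the smallness $M\le\varepsilon_0$, and transfer to histories via the isometry $\mathcal P_s$ and the bound of the trace by the $C^b_z$ norm. Your explicit use of the fixed-point ball of radius $2CM$ in the subcritical case makes precise a bound that the paper leaves implicit.

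One small slip: the H\"older split in $x$ should read $\frac{p-1}{p+1}+\frac{1}{p+1}=\frac{p}{p+1}$, not $\frac{p-1}{p+1}+\frac{2}{p+1}$. The estimate you then display is nonetheless the correct one.
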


\begin{proof}
Let $U,\widetilde U$ be the corresponding half-space mild solutions, with
initial data $U_0=\mathcal P_su_0$ and
$\widetilde U_0=\mathcal P_s\widetilde u_0$. Subtracting the two Duhamel
representations gives
\[
U-\widetilde U
=\mathbb T_a^\star(U_0-\widetilde U_0)
+c_s\mu\,\Theta_a^\star\bigl(|u|^{p-1}u-|
\widetilde u|^{p-1}\widetilde u\bigr).
\]
By the linear estimates of \cite[Theorem~2.4]{GS2}, applied to the
homogeneous and boundary components of the Duhamel representation,
\[
\|U-\widetilde U\|_{\mathcal X_T}
\leq C\|U_0-\widetilde U_0\|_{L^2_a}
+C|c_s\mu|\,
\bigl\||u|^{p-1}u-|\widetilde u|^{p-1}\widetilde u\bigr\|_{L^{q'}_tL^{r'}_x}.
\]
For the nonlinear term we use
\[
\bigl||w|^{p-1}w-|v|^{p-1}v\bigr|
\leq C_p(|w|+|v|)^{p-1}|w-v|,
\qquad w,v\in\C,
\]
and H\"older's inequality with the exponents used in the self-map estimate
of \cite{GS2}. Since $r=p+1$ gives $r/p=r'$, we obtain
\[
\bigl\||u|^{p-1}u-|\widetilde u|^{p-1}\widetilde u\bigr\|_{L^{q'}_TL^{r'}_x}
\leq C\theta(T)
\bigl(\|u\|_{L^q_TL^r_x}+\|\widetilde u\|_{L^q_TL^r_x}\bigr)^{p-1}
\|u-\widetilde u\|_{L^q_TL^r_x},
\]
where $\theta(T)=1$ in the critical case and $\theta(T)=T^\kappa$ with
$\kappa>0$ in the subcritical case, and the boundary trace is controlled by
$\|U\|_{\mathcal X_T}$. In the subcritical case, the existence time supplied by
\cite[Theorem~2.9(2)]{GS2} in the present range $0\le a<1$ depends on the data
only through $\|U_0\|_{L^2_a}$, so that both solutions live on a common
interval $[0,T]$ with $T=T(M)$, and since $\theta(T)=T^\kappa\to0$ as
$T\to0^+$ we may shrink $T$, still as a function of $M$ alone, until the
nonlinear contribution is at most one half of the left-hand side. In the
critical case $\theta\equiv1$, no such gain in $T$ is available, and one uses
instead the smallness $M\le\varepsilon_0$: by
Theorem~\ref{T:memory-wp}(i)(a) the two solutions exist on $[0,T]$ for every
$T<\infty$ and satisfy $\|U\|_{\mathcal X_T},\|\widetilde U\|_{\mathcal X_T}\le
C\varepsilon_0$, so that the same absorption is achieved, uniformly in $T$, by
taking $\varepsilon_0$ small. Absorbing the nonlinear term yields
\[
\|U-\widetilde U\|_{\mathcal X_T}
\leq 2C\|U_0-\widetilde U_0\|_{L^2_a}.
\]
Finally, since $\mathcal P_s$ is an isometry,
\[
\|U_0-\widetilde U_0\|_{L^2_a}
=\|u_0-\widetilde u_0\|_{\mathcal H^s},
\]
and the boundary trace is bounded by the half-space norm. This proves (ii).

\end{proof}

\begin{remark}
Together with Theorem~\ref{T:memory-wp}, Proposition~\ref{P:contdep} gives
Hadamard well-posedness on $\mathcal H^s$ in the regime $0\le a<1$. In the
range $-1<a<0$ the theorem above gives existence and uniqueness, while this
paper does not assert a separate Lipschitz dependence statement.
\end{remark}

Theorem~\ref{T:memory-wp} completes the passage from the extension
theory developed in the present paper to the nonlinear evolution.
The prescribed history \(u_0\in\mathcal H^s\) is encoded canonically
by its Poisson lifting \(U_0=\mathcal P_su_0\); the nonlinear
evolution is solved in the upper half-space by the theory of
\cite{GS2}; and the strong boundary trace of that evolution defines
the corresponding mild solution of the memory problem. Proposition
\ref{P:boundary-Duhamel} shows, moreover, that the nonlinear boundary
term is exactly the fractional Duhamel potential associated with the
fundamental solution of \(\Le^s\).




\section{Appendix}\label{S:app}

In this section we collect some known results which are useful in the main body of the paper; for the classical theory of Bessel functions we refer to \cite{Wa} and \cite{Le}. 
Consider the Bessel equation of order $\nu\in \mathbb C$
\begin{equation}\label{besseleq}
z^2 \frac{d^2 J}{dz^2} + z \frac{dJ}{dz} + (z^2 - \nu^2)J = 0.
\end{equation}
Assume $\nu\notin\mathbb N\cup\{0\}$. Then, two linearly independent solutions of \eqref{besseleq} are the Bessel function of the first kind and order $\nu$ 
\begin{equation}\label{besseries}
J_\nu(z) = \sum_{k=0}^\infty \frac{(-1)^k}{\G(k+1) \G(\nu+k+1)} \left(\frac z2\right)^{2k+\nu}\ \ \ \ \ \ \ \ |\arg z|<\pi,
\end{equation}
and the function 
\begin{equation}\label{negbes}
J_{-\nu}(z)=\sum_{k=0}^\infty \frac{(-1)^k}{\G(k+1) \G(k+1-\nu)} \left(\frac z2\right)^{2k-\nu}\ \ \ \ \ \ \ \ |\arg z|<\pi.
\end{equation}
From \eqref{besseries}, \eqref{negbes} we have as $z\to 0$, $|\arg z|<\pi$,
\begin{equation}\label{Js}
J_{\nu}(z) \cong \frac 1{\G(\nu+1)} \left(\frac{z}{2}\right)^{\nu},\ \ \ \ J_{-\nu}(z) \cong \frac 1{\G(1-\nu)} \left(\frac{z}{2}\right)^{-\nu}.
\end{equation}
The asymptotic behavior for large $z$ is much more delicate. One has for $\Re\nu>-\dfrac12$, and $0<\delta<\pi$
\begin{align}\label{jnuinfty} J_\nu(z)&=\sqrt{\frac2{\pi
z}}\cos\left(z-\frac{\pi\nu}2-\frac\pi4\right)+
O(z^{-\frac32})\\
&\quad\text{as }|z|\to\infty,\quad-\pi+\delta<\arg z<\pi-\delta.
\notag
\end{align}
We also note that the functions
\begin{equation}\label{FG}
F_\nu(z) = z^\nu J_\nu(z),\ \ \ \ \ G_\nu(z) = z^{-\nu} J_{\nu}(z),
\end{equation}
satisfy the following recurrence relations, see \cite[(5.3.5), p.103]{Le} (note that, to obtain the second relation in \eqref{FG}, we have changed $\nu$ into $-\nu$ in the second relation in (5.3.5). This is allowed since (5.3.5) is valid for arbitrary $\nu\in \C$)
\begin{equation}\label{recu}
\frac{d}{dz} F_\nu(z) = z^\nu J_{\nu-1}(z),\ \ \ \ \ \frac{d}{dz} G_{\nu}(z) = - z^{-\nu} J_{1+\nu}(z).
\end{equation}
If we let $\mu = -\nu$, then the second relation in \eqref{recu} gives
\begin{equation}\label{recubis}
\frac{d}{dz} [G_{-\nu}(z)] = \frac{d}{dz}[z^\nu J_{-\nu}(z)] = - z^\nu J_{1-\nu}(z).
\end{equation}
Consider the generalized Bessel equation
\begin{equation}\label{genbessel}
z^2 \Psi''(z) + (1 - 2\alpha) z \Psi'(z) + \left[\beta^2 \gamma^2
z^{2\gamma} + (\alpha^2 - \nu^2 \gamma^2)\right] \Psi(z) = 0.
\end{equation}
If $\Phi(z)$ is a solution to 
\eqref{besseleq}, then the function defined by the
transformation
\begin{equation}\label{besselcv}
\Psi(z) = z^\alpha \Phi(\beta z^\gamma)
\end{equation}
solves the equation \eqref{genbessel}.

Consider now the modified Bessel equation of order $\nu\in \mathbb C$
\begin{equation}\label{modbesseleq}
z^2 \frac{d^2\Phi}{dz^2} + z \frac{d\Phi}{dz} - (z^2 + \nu^2)\Phi = 0.
\end{equation}
For $\nu\not\in \mathbb N\cup\{0\}$, two linearly independent solutions of \eqref{modbesseleq} are the modified Bessel function of the first kind, 
\begin{equation}\label{Inu}
I_\nu(z) = \sum_{k=0}^\infty \frac{(z/2)^{\nu+2k}}{\G(k+1) \G(k+\nu+1)},\ \ \ \ \ \ \  \ \ |\arg(z)| < \pi,
\end{equation}
and the modified Bessel function of the third kind, or Macdonald function, which is defined by
\begin{equation}\label{Knu}
K_\nu(z) = \frac \pi{2} \frac{I_{-\nu}(z) - I_\nu(z)}{\sin \pi \nu},\ \ \ \ \ \ \ \ \ \ \  \ |\arg(z)| < \pi.
\end{equation}
Notice that 
\begin{equation}\label{same}
K_\nu(z) = K_{-\nu}(z).
\end{equation}
From \eqref{Inu} we have as $z\to 0$, $|\arg z|<\pi$,
\begin{equation}\label{Is}
I_{\nu}(z) \cong \frac 1{\G(\nu+1)} \left(\frac{z}{2}\right)^{\nu},\ \ \ \ I_{-\nu}(z) \cong \frac 1{\G(1-\nu)} \left(\frac{z}{2}\right)^{-\nu}.
\end{equation}
If $\nu> 0$, we infer from \eqref{Knu} and \eqref{Is}
\[
K_{\nu}(z) \cong 2^{\nu-1} \frac{\pi}{\sin(\pi \nu)} \frac{z^{-\nu}}{\G(1-\nu)}.
\]
Combining this asymptotic with the formula, see \cite[(1.2.2) p.3]{Le},  
\begin{equation}\label{sine}
\G(z) \G(1-z) = \frac{\pi}{\sin(\pi z)},
\end{equation} 
we infer when $\nu> 0$
\begin{equation}\label{zeroKnu}
K_{\nu}(z) \cong 2^{\nu-1} \G(\nu) z^{-\nu},\ \ \ \ \ z\to 0,\ \ |\arg z|<\pi.
\end{equation}
The following recurrence relation holds, see \cite[(5.7.9), p.110]{Le}
\begin{equation}\label{recuu}
\frac{d}{dz} [z^\nu K_\nu(z)] = - z^\nu K_{\nu-1}(z) = -  z^\nu K_{1-\nu}(z),
\end{equation}
where in the last equality we have used \eqref{same}.
If we consider the generalized modified Bessel equation
\begin{equation}\label{modgenbessel}
z^2 \Psi''(z) + (1 - 2\alpha) z \Psi'(z) + \left[(\alpha^2 - \nu^2 \gamma^2)- \beta^2 \gamma^2
z^{2\gamma}\right] \Psi(z) = 0,
\end{equation}
if the function $\Phi$ solves \eqref{modbesseleq}, then the function defined by \eqref{besselcv}
is a solution to \eqref{modgenbessel}.
As $z\to \infty$, we have 
\begin{equation}\label{ab}
I_\nu(z) =  \frac{e^z}{(2\pi z)^{1/2}} \left(1+ O(|z|^{-1})\right),
\end{equation}
whereas $K_\nu$ decays exponentially at infinity according to the asymptotic formula
\begin{equation}\label{abK}
K_\nu(z) = \left(\frac{\pi}{2z}\right)^{1/2} e^{-z}  \left(1+ O(|z|^{-1})\right), \ \ \ \ \ |\arg z| \le \pi - \delta,
\end{equation}
see \cite[(5.11.9) on p.123]{Le}. Finally, we use the following integral representation 
\begin{equation}\label{intK}
K_\nu(z) = 2^{-\nu-1} z^\nu \int_0^\infty e^{-\tau} e^{-\frac{z^2}{4\tau}} t^{-\nu-1} d\tau,\ \ \ \ \ |\arg z|<\frac{\pi}4,
\end{equation}  
see \cite[(5.10.25), p.119]{Le}.

\subsection{Some oscillatory integral identities}\label{S:osc2}

In this section we collect some  oscillatory integrals which are needed in the main body of the paper. Throughout, complex powers and square roots are taken on the principal
branch:
\[
w^\alpha=\exp(\alpha \operatorname{Log} w),
\qquad
\operatorname{Log} w=\log|w|+i\operatorname{Arg} w,
\qquad
-\pi<\operatorname{Arg} w<\pi.
\]
We begin with a basic formula.
 
 \begin{lemma}\label{L:aux}
 Let $A\in \R\setminus\{0\}$ and $0<s<1$. Then
 \begin{equation}\label{As}
 -\frac{s}{\G(1-s)} \int_0^\infty \frac{1}{\sigma^{1+s}} [e^{-i \sigma A} - 1] d\sigma = (i A)^s = |A|^s e^{i\frac{\pi s}{2} \operatorname{sgn}A}.
 \end{equation}
 \end{lemma}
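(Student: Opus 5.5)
The plan is to reduce the identity to the Gamma-function integral of Lemma \ref{L:cis}, which gives, for $0<\Re\lambda<1$ and $\omega>0$,
\[
\int_0^\infty u^{\lambda-1}e^{\pm i\omega u}\,du=\G(\lambda)\,\omega^{-\lambda}e^{\pm i\frac{\pi\lambda}{2}}
\]
as an improper integral. This integral is reached by an integration by parts that removes the singularity at $\sigma=0$.

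First, note that the integrand $\sigma^{-1-s}(e^{-i\sigma A}-1)$ is $O(\sigma^{-s})$ near $0$ and $O(\sigma^{-1-s})$ at infinity. Hence the integral converges absolutely, and it suffices to compute it over $(\varepsilon,R)$ and then let $\varepsilon\to0^+$ and $R\to\infty$. On $(\varepsilon,R)$ I will integrate by parts, writing $\sigma^{-1-s}=-\frac1s\frac{d}{d\sigma}\sigma^{-s}$. The boundary terms are
\[
-\frac1s\bigl[\sigma^{-s}(e^{-i\sigma A}-1)\bigr]_\varepsilon^R .
\]
They vanish in the limit, because $|e^{-i\sigma A}-1|\le|A|\sigma$ near $0$ and $s>0$ at infinity. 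This leaves
\[
\int_0^\infty\frac{e^{-i\sigma A}-1}{\sigma^{1+s}}\,d\sigma=-\frac{iA}{s}\int_0^\infty\sigma^{-s}e^{-i\sigma A}\,d\sigma,
\]
where the right-hand integral converges only conditionally at infinity. Multiplying by $-\frac{s}{\G(1-s)}$, the left side of \eqref{As} equals
\[
\frac{iA}{\G(1-s)}\int_0^\infty\sigma^{-s}e^{-i\sigma A}\,d\sigma .
\]

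Next, I apply the Gamma integral with $\lambda=1-s\in(0,1)$ and $\omega=|A|$, with the sign chosen as $-\operatorname{sgn}A$. This gives
\[
\int_0^\infty\sigma^{-s}e^{-i\sigma A}\,d\sigma=\G(1-s)\,|A|^{s-1}e^{-i\operatorname{sgn}(A)\frac{\pi(1-s)}{2}} .
\]
Hence the quantity in \eqref{As} equals $iA\,|A|^{s-1}e^{-i\operatorname{sgn}(A)\frac{\pi}{2}}e^{i\operatorname{sgn}(A)\frac{\pi s}{2}}$. Since $iA=|A|e^{i\operatorname{sgn}(A)\pi/2}$, the two $\pi/2$ phases cancel. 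The result is $|A|^se^{i\frac{\pi s}{2}\operatorname{sgn}A}$, which is $(iA)^s$ on the principal branch because $\operatorname{Arg}(iA)=\pm\pi/2$.

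The main obstacle is the conditional convergence of $\int_0^\infty\sigma^{-s}e^{-i\sigma A}\,d\sigma$. I will justify it either by citing \eqref{ei} of Lemma \ref{L:cis}, or directly by Abel regularization. For the direct route, insert $e^{-\delta\sigma}$ and use $\int_0^\infty\sigma^{-s}e^{-(\delta+iA)\sigma}\,d\sigma=\G(1-s)(\delta+iA)^{s-1}$, which holds for $\Re(\delta+iA)>0$ by analytic continuation of the Gamma integral. Then let $\delta\to0^+$: one further integration by parts shows that the truncated tails are $O(R^{-s})$ uniformly in $\delta$, so the limit can be exchanged with the integral. The principal branch of $(\delta+iA)^{s-1}$ tends to $|A|^{s-1}e^{i(s-1)\operatorname{sgn}(A)\pi/2}$, which matches the formula used above.
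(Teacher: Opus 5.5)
Your proof is correct, but it takes a different route from the paper's. The paper does not integrate by parts. It quotes the identity $-\frac{s}{\Gamma(1-s)}\int_0^\infty\sigma^{-1-s}(e^{-\sigma L}-1)\,d\sigma=L^s$ for $\Re L>0$, applies it with $L=\varepsilon+iA$, and lets $\varepsilon\to0^+$. The phase $e^{i\frac{\pi s}{2}\operatorname{sgn}A}$ then comes from $\operatorname{Arg}(\varepsilon+iA)\to\frac{\pi}{2}\operatorname{sgn}A$, which is the same limiting-absorption device the paper uses for $\sqrt{p-i0}$.

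Because the paper regularizes the original, absolutely convergent integral, its limit is plain dominated convergence. For $0<\varepsilon\le1$ one has $|e^{-\sigma L}-1|\le\min\{2,(1+|A|)\sigma\}$, since $\Re L\ge0$. The paper does not spell this out, but nothing delicate happens.

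You integrate by parts first. This gives up absolute convergence in exchange for reducing the lemma to the oscillatory Euler integral of Lemma \ref{L:cis} at $\mu=1-s$, $b=|A|$, which the paper already records. Your details check out:
\begin{itemize}
\item The boundary terms vanish, which also shows that $\int_0^\infty\sigma^{-s}e^{-i\sigma A}\,d\sigma$ exists as an improper limit.
\item The sign choice $-\operatorname{sgn}A$ is correct.
\item The phase cancellation $iA\,|A|^{-1}e^{-i\frac{\pi}{2}\operatorname{sgn}A}=1$ is correct.
\item Your self-contained Abel variant is sound: integrating by parts once more with $|\delta+iA|\ge|A|$ gives tails that are $O(R^{-s})$ uniformly in $\delta$.
\end{itemize}

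That Abel variant is really the paper's regularization carried out after the integration by parts instead of before it. Regularizing first, as the paper does, is what makes exchanging the limits trivial. Your ordering has the compensating advantage that it rests on a formula stated in the paper's appendix. The paper's starting point is a quoted identity for complex $L$, whose proof itself needs analytic continuation in $L$.
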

 
 \begin{proof}
We recall the following well-known integral:
\[
-\frac{s}{\G(1-s)} \int_0^\infty \frac{1}{\sigma^{1+s}} [e^{-\sigma L } - 1] d\sigma = L^s,\ \ \ \ \ \ \Re L>0.
\]
Applying this formula with $L = \ve+iA$, $\ve>0$, and letting $\ve\to 0^+$ we reach the desired conclusion \eqref{As}.

 \end{proof}
 
We will also need the following classical oscillatory integral.

\begin{lemma}\label{L:cis}
Let $b>0$ and $0<\Re \mu<1$. Then
\begin{equation}\label{ei}
\int_0^\infty y^{\mu-1} e^{\pm i by} dy = \frac{\G(\mu)}{b^\mu} e^{\pm i\frac{\mu \pi}{2}}.
\end{equation} 
\end{lemma}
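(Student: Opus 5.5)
The plan is to deduce \eqref{ei} from Cauchy's theorem, by rotating the contour of integration onto the imaginary axis. Fix $b>0$ and $0<\Re\mu<1$, and consider the plus sign first; the minus sign is handled identically with the rotation taken in the opposite direction (or, for real $\mu$, by complex conjugation, with analytic continuation in $\mu$ for the general case). The function $f(w)=w^{\mu-1}e^{ibw}$, with the principal branch of $w^{\mu-1}$, is holomorphic in the open first quadrant. I would integrate it over the boundary of the region $\{\epsilon<|w|<R,\ 0<\arg w<\frac\pi2\}$. This contour consists of:
\begin{itemize}
\item the segment $[\epsilon,R]$ of the real axis;
\item the arc $|w|=R$;
\item the segment from $iR$ to $i\epsilon$;
\item the small arc $|w|=\epsilon$.
\end{itemize}
Since $f$ is holomorphic inside, the sum of these four integrals is zero.

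On the imaginary segment I put $w=iy$. Then $w^{\mu-1}=e^{i\frac\pi2(\mu-1)}y^{\mu-1}$ and $e^{ibw}=e^{-by}$, so that piece equals
\[
-\,i\,e^{i\frac\pi2(\mu-1)}\int_\epsilon^R y^{\mu-1}e^{-by}\,dy=-\,e^{i\frac{\pi\mu}{2}}\int_\epsilon^R y^{\mu-1}e^{-by}\,dy .
\]
As $\epsilon\to0$ and $R\to\infty$, the integral on the right tends to $\G(\mu)b^{-\mu}$. This is the Euler integral after the substitution $by\mapsto y$, and it converges absolutely because $\Re\mu>0$.

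The small arc contributes $O(\epsilon^{\Re\mu})$, because $|e^{ibw}|\le1$ in the closed first quadrant and $|w^{\mu-1}|\le C\epsilon^{\Re\mu-1}$ on that arc. This tends to $0$ since $\Re\mu>0$.

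The main obstacle is the large arc, where I would use Jordan's lemma. With $w=Re^{i\phi}$ one has $|e^{ibw}|=e^{-bR\sin\phi}$ and $|w^{\mu-1}|\le C R^{\Re\mu-1}$, with $C$ depending on $\operatorname{Im}\mu$ because $\arg w$ is bounded. Using $\sin\phi\ge 2\phi/\pi$ on $[0,\frac\pi2]$, the arc integral is bounded by
\[
C R^{\Re\mu}\int_0^{\pi/2}e^{-2bR\phi/\pi}\,d\phi\le C' R^{\Re\mu-1}.
\]
This tends to $0$ precisely because $\Re\mu<1$.

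Putting the pieces together, the improper integral $\int_0^\infty y^{\mu-1}e^{iby}\,dy=\lim_{R\to\infty}\int_0^R$ exists and equals $e^{i\frac{\pi\mu}{2}}\G(\mu)b^{-\mu}$. The existence at infinity can also be checked independently by one integration by parts, since $\Re\mu<1$. For the minus sign I would rotate into the fourth quadrant, $w=-iy$, where $w^{\mu-1}=e^{-i\frac\pi2(\mu-1)}y^{\mu-1}$. The same three estimates then give the factor $e^{-i\frac{\pi\mu}{2}}$.
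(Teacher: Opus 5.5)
Your proof is correct, but it takes a different route from the paper. The paper does not derive \eqref{ei}. It quotes the tabulated integrals $\int_0^\infty y^{\mu-1}\sin(by)\,dy=\G(\mu)b^{-\mu}\sin\frac{\mu\pi}{2}$ and $\int_0^\infty y^{\mu-1}\cos(by)\,dy=\G(\mu)b^{-\mu}\cos\frac{\mu\pi}{2}$ from Gradshteyn--Ryzhik (3.761.4 and 3.761.9), and combines them with the factor $\pm i$. This is legitimate for complex $\mu$ by linearity and Euler's formula.

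You instead prove the identity from scratch: you rotate the contour onto $\pm i\R^+$ and reduce to Euler's integral for $\G(\mu)$. Each step checks out. The phase $-i\,e^{i\frac\pi2(\mu-1)}=-e^{i\frac{\pi\mu}{2}}$ is right. The small arc is $O(\epsilon^{\Re\mu})$, and the Jordan bound gives $O(R^{\Re\mu-1})$ on the large arc. The fourth-quadrant rotation yields $e^{-i\frac{\pi\mu}{2}}$.

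The paper's route buys brevity. Yours buys a self-contained argument, which incidentally also proves the two tabulated formulas. It also shows where each hypothesis enters: $\Re\mu>0$ at the origin and in $\G(\mu)$, and $\Re\mu<1$ on the large arc.

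More importantly, your argument shows explicitly that the integral exists as the improper limit $\lim_{R\to\infty}\int_0^R$. That is exactly the sense in which the lemma is used later. In Lemma~\ref{L:Abel} the integral appears as $\lim_{\ve\to0^+}\int_0^{t/\ve-1}$, and in Lemma~\ref{L:Paone} it is an improper integral. An Abel-regularised proof, inserting $e^{-\delta y}$ and letting $\delta\to0^+$, would need an extra Abelian-theorem step to reach that conclusion; your contour argument avoids this.
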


\begin{proof}
The proof of \eqref{ei} directly follows from:
\begin{equation}\label{sincosy}
\int_0^\infty y^{\mu-1} \sin(by) dy = \frac{\G(\mu)}{b^\mu} \sin \frac{\mu \pi}{2},\ \ \ \ \ \ \int_0^\infty y^{\mu-1} \cos(by) dy = \frac{\G(\mu)}{b^\mu} \cos \frac{\mu \pi}{2},
\end{equation}
see \cite[formulas 3.761.4 \& 3.761.9, p.420-1]{GR}.

\end{proof}

We also need the following important consequence of Lemma \ref{L:cis}.

\begin{lemma}\label{L:Abel}
Let $0<s<1, t>0$, and $z>0$. Then the following oscillatory identity holds:
\begin{equation}\label{abel}
\int_0^t
\frac{e^{i \frac{z^2}{4\sigma}}}{(t-\sigma)^{1-s}\sigma^{1+s}}\,d\sigma =
4^s\,\G(s)\,e^{i\frac{\pi s}{2}}\,t^{s-1}\,z^{-2s}
e^{i \frac{z^2}{4t}}.
\end{equation}
\end{lemma}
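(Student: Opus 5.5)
Since the identity is one-dimensional in $\sigma$, the plan is to reduce it to Lemma \ref{L:cis}. I would use a projective substitution that turns the Abel-type kernel into a pure power times an oscillatory exponential. Put
\[
\frac{1}{\sigma}-\frac{1}{t}=\frac{y}{t},\qquad\text{i.e.}\qquad \sigma=\frac{t}{1+y},\quad y\in(0,\infty),
\]
which maps $\sigma\in(0,t)$ onto $y\in(0,\infty)$, with $\sigma\to0^+$ corresponding to $y\to\infty$. The relevant pieces are then
\[
d\sigma=-\frac{t}{(1+y)^2}\,dy,\qquad t-\sigma=\frac{ty}{1+y},\qquad \sigma^{-1-s}=t^{-1-s}(1+y)^{1+s},
\]
and for the phase
\[
e^{i\frac{z^2}{4\sigma}}=e^{i\frac{z^2}{4t}}\,e^{i\frac{z^2}{4t}y}.
\]

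Substituting, the powers of $(1+y)$ should cancel exactly: $(1+y)^{1-s}\cdot(1+y)^{1+s}\cdot(1+y)^{-2}=1$. The powers of $t$ combine as $t^{s-1}\,t^{-1-s}\,t=t^{-1}$. The left-hand side therefore becomes
\[
t^{-1}e^{i\frac{z^2}{4t}}\int_0^\infty y^{s-1}e^{i b y}\,dy,\qquad b=\frac{z^2}{4t}>0.
\]
Lemma \ref{L:cis} with $\mu=s\in(0,1)$ evaluates the remaining integral as $\G(s)\,b^{-s}e^{i\frac{\pi s}{2}}$. Since $b^{-s}=4^s t^s z^{-2s}$, the product is $4^s\G(s)e^{i\frac{\pi s}2}t^{s-1}z^{-2s}e^{i\frac{z^2}{4t}}$, which is exactly \eqref{abel}.

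The only delicate point is convergence. Near $\sigma=t$ the integrand is absolutely integrable because $s>0$. Near $\sigma=0$ it is only conditionally convergent, since it behaves like $\sigma^{-1-s}$ times a unimodular oscillation. This is the main obstacle. I would handle it by interpreting the left-hand side as the improper integral $\lim_{\varepsilon\to0^+}\int_\varepsilon^t$. Under the substitution this becomes $\lim_{R\to\infty}\int_0^R y^{s-1}e^{iby}\,dy$, which is precisely the improper integral asserted in Lemma \ref{L:cis} (convergent by one integration by parts, since $s<1$). Since the change of variables is a smooth bijection on each truncated interval, the identity passes to the limit.
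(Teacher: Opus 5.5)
Your proof is correct and is essentially the paper's argument. The paper makes the substitutions $\sigma=tu$, $\tau=u^{-1}$ and $y=\tau-1$ one after another, which together amount to your single substitution $\sigma=t/(1+y)$. Like you, it reads the left-hand side as $\lim_{\varepsilon\to0^+}\int_\varepsilon^t$, which becomes the upper limit $t/\varepsilon-1$ in $y$, and then applies Lemma \ref{L:cis} with $\mu=s$.
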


\begin{proof}
Since the factor $\sigma^{-{(1+s)}}$ is not integrable at $\sigma=0$, we interpret the integral $I(t,z)$ in the left-hand side of \eqref{abel}
as a generalized Riemann integral in which we first make the change of variable $\sigma = t u$, and then $\tau = u^{-1}$, obtaining
\begin{align*}
I(t,z) & = \underset{\ve\to 0^+}{\lim} \int_{\ve}^t \frac{e^{i\frac{z^2}{4\sigma}}}
{(t-\sigma)^{1-s}\sigma^{1+s}}\,d\sigma = \frac{1}{t}\underset{\ve\to 0^+}{\lim}  \int_{\ve/t}^1\frac{e^{i\frac{z^2}{4t u}}}
{(1-u)^{1-s} u^{1+s}}\,du
\\
& = \frac{1}{t} \underset{\ve\to 0^+}{\lim} \int_1^{t/\ve}\frac{e^{i\frac{z^2}{4t }\tau}}
{(\tau-1)^{1-s}}\,d\tau = \frac{e^{i\frac{z^2}{4t }}}{t} \underset{\ve\to 0^+}{\lim} \int_0^{t/\ve -1} y^{s-1}e^{i\frac{z^2}{4t } y}\,dy.
\end{align*}
If we now apply \eqref{ei} with $\mu = s$, we reach the desired conclusion.

\end{proof}


\bibliographystyle{amsplain}

\end{document}